\theoremstyle{definition}
\newtheorem{theorem}{Theorem}[section]
\newtheorem{proposition}[theorem]{Proposition}
\newtheorem{lemma}[theorem]{Lemma}
\newtheorem{corollary}[theorem]{Corollary}
\newtheorem{remark}[theorem]{Remark}
\newtheorem*{remark*}{Remark}
\DeclareMathOperator{\re}{Re}
\DeclareMathOperator{\im}{Im}
\DeclareMathOperator{\Mod}{Mod}
\DeclareMathOperator{\ModOp}{\Mod_{\mathrm{op}}}
\title[Minimal mass blow-up solutions for NLS with a potential]{Minimal mass blow-up solutions for nonlinear Schr\"{o}dinger equations with a potential}
\author[N. Matsui]{Naoki Matsui}
\date{\today}
\address[N. Mastui]{Department of Mathematics\\ Tokyo University of Science\\ 1-3 Kagurazaka, Shinjuku-ku, Tokyo 162-8601, Japan}
\email[N. Matsui]{1120703@ed.tus.ac.jp}
\keywords{nonlinear Schr\"{o}dinger equation, critical exponent, critical mass, minimal mass blow-up, blow-up rate, potential type.}
\subjclass[2010]{35Q55}
\begin{document}
\maketitle

\begin{abstract}
We consider a mass critical nonlinear Schr\"{o}dinger equation with a real-valued potential. In this work, we construct a minimal mass solution that blows up at finite time, under weaker assumptions on spatial dimensions and potentials than Banica, Carles, and Duyckaerts (2011). Moreover, we show that the blow-up solution converges to a blow-up profile. Furthermore, we improve some parts of the arguments in Rapha\"{e}l and Szeftel (2011) and Le Coz, Martel, and Rapha\"{e}l (2016).
\end{abstract}

\section{Introduction}
We consider the following nonlinear Schr\"{o}dinger equation:
\begin{align}
\label{NLS}
i\frac{\partial u}{\partial t}+\Delta u+g(x)|u|^{\frac{4}{N}}u-V(x)u=0,\quad (t,x)\in\mathbb{R}\times\mathbb{R}^N,
\end{align}
where $N\in\mathbb{N}$, $g$ is a real-valued function, and $V$ is a real-valued potential. It is well known that if
\begin{align}
\label{pote1}
V&\in L^{p}(\mathbb{R}^N)+L^{\infty}(\mathbb{R}^N)\quad \left(p\geq 1\ \mathrm{and}\ p>\frac{N}{2}\right),\\
\label{g1}
g&\in L^\infty(\mathbb{R}^N),
\end{align}
then \eqref{NLS} is locally well-posed in $H^1(\mathbb{R}^N)$ (see, e.g., \cite{CSSE}). This means that for any initial value $u_0\in H^1(\mathbb{R}^N)$, there exists a unique maximal solution $u\in C((T_*,T^*),H^1(\mathbb{R}^N))\cap C^1((T_*,T^*),H^{-1}(\mathbb{R}^N))$ for \eqref{NLS} with $u(0)=u_0$. Moreover, the mass (i.e., $L^2$-norm) and energy $E$ of the solution $u$  are conserved by the flow, where 
\[
E(u):=\frac{1}{2}\left\|\nabla u\right\|_2^2-\frac{1}{2+\frac{4}{N}}\int_{\mathbb{R}^N}g(x)|u(x)|^{2+\frac{4}{N}}dx+\frac{1}{2}\int_{\mathbb{R}^N}V(x)|u(x)|^2dx.
\]
Furthermore, the blow-up alternative holds:
\[
T^*<\infty\quad \mbox{implies}\quad \lim_{t\nearrow T^*}\left\|\nabla u(t)\right\|_2=\infty.
\]

We define $\Sigma^k$ by
\[
\Sigma^k:=\left\{u\in H^k\left(\mathbb{R}^N\right)\ \middle|\ |x|^ku\in L^2\left(\mathbb{R}^N\right)\right\},\quad \|u\|_{\Sigma^k}^2:=\|u\|_{H^k}^2+\||x|^ku\|_2^2.
\]
Particularly, $\Sigma^1$ is called the virial space. If $u_0\in \Sigma^1$, then the solution $u$ for \eqref{NLS} with $u(0)=u_0$ belongs to $C((T_*,T^*),\Sigma^1(\mathbb{R}^N))$.

Moreover, we consider
\begin{align}
\label{pote1'}
V\in L^{p}(\mathbb{R}^N)+L^{\infty}(\mathbb{R}^N)\quad \left(p\geq 2\ \mathrm{and}\ p>\frac{N}{2}\right).
\end{align}
If $u_0\in \Sigma^2$, then the solution $u$ for \eqref{NLS} with $u(0)=u_0$ belongs to $C((T_*,T^*),\Sigma^2(\mathbb{R}^N))\cap C^1((T_*,T^*),L^2(\mathbb{R}^N))$ and $|x|\nabla u\in C((T_*,T^*),L^2(\mathbb{R}^N))$.

In this paper, we investigate conditions for the potential related with the existence of a minimal mass blow-up solution for \eqref{NLS}.

\subsection{The case $V=0$ and $g=1$}
\label{sec:Vconst}
Firstly, we describe the results when $V$ is a real constant and $g=1$. Let $u_V$ be a solution for \eqref{NLS} and define $u(t,x):=u_V(t,x)e^{iVt}$. Then $u$ is a solution for \eqref{NLS} with $V=0$ and $g=1$. Therefore, we may assume that $V=0$, that is, we consider
\begin{align}
\label{NLS0}
i\frac{\partial u}{\partial t}+\Delta u+|u|^{\frac{4}{N}}u=0,\quad (t,x)\in\mathbb{R}\times\mathbb{R}^N.
\end{align}

It is well known (\cite{BLGS,KGS,WGS}) that there exists a unique classical solution $Q$ for
\[
-\Delta Q+Q-\left|Q\right|^{\frac{4}{N}}Q=0,\quad Q\in H^1(\mathbb{R}^N),\quad Q>0,\quad Q\mathrm{\ is\ radial},
\]
which is called the ground state. If $\|u\|_2=\|Q\|_2$ ($\|u\|_2<\|Q\|_2$, $\|u\|_2>\|Q\|_2$), we say that $u$ has the \textit{critical mass} (\textit{subcritical mass}, \textit{supercritical mass}, respectively).

We note that $E_{\mathrm{crit}}(Q)=0$, where $E_{\mathrm{crit}}$ is the energy when $V=0$. Moreover, the ground state $Q$ attains the best constant in the Gagliardo-Nirenberg inequality
\[
\left\|v\right\|_{2+\frac{4}{N}}^{2+\frac{4}{N}}\leq\left(1+\frac{2}{N}\right)\left(\frac{\left\|v\right\|_2}{\left\|Q\right\|_2}\right)^{\frac{4}{N}}\left\|\nabla v\right\|_2^2\quad\mbox{for }v\in H^1(\mathbb{R}^N).
\]
Therefore, for all $v\in H^1(\mathbb{R}^N)$,
\[
E_{\mathrm{crit}}(v)\geq \frac{1}{2}\left\|\nabla v\right\|_2^2\left(1-\left(\frac{\left\|v\right\|_2}{\left\|Q\right\|_2}\right)^{\frac{4}{N}}\right)
\]
holds. This inequality and the mass and energy conservations imply that any subcritical mass solution for \eqref{NLS0} exists globally in time and is bounded in $H^1(\mathbb{R}^N)$.

Regarding the critical mass case, we apply the pseudo-conformal transformation
\[
u(t,x)\ \mapsto\ \frac{1}{\left|t\right|^\frac{N}{2}}u\left(-\frac{1}{t},\pm\frac{x}{t}\right)e^{i\frac{\left|x\right|^2}{4t}}
\]
to the solitary wave solution $u(t,x):=Q(x)e^{it}$. Then we obtain
\[
S(t,x):=\frac{1}{\left|t\right|^\frac{N}{2}}Q\left(\frac{x}{t}\right)e^{-\frac{i}{t}}e^{i\frac{\left|x\right|^2}{4t}},
\]
which is also a solution for \eqref{NLS0} and satisfies
\[
\left\|S(t)\right\|_2=\left\|Q\right\|_2,\quad \left\|\nabla S(t)\right\|_2\sim\frac{1}{\left|t\right|}\quad (t\nearrow 0).
\]
Namely, $S$ is a minimal mass blow-up solution for \eqref{NLS0}. Moreover, $S$ is the only finite time blow-up solution for \eqref{NLS0} with critical mass, up to the symmetries of the flow (see \cite{MMMB}).

Regarding the supercritical mass case, there exists a solution $u$ for \eqref{NLS0} such that
\[
\left\|\nabla u(t)\right\|_2\sim\sqrt{\frac{\log\bigl|\log\left|T^*-t\right|\bigr|}{T^*-t}}\quad (t\nearrow T^*)
\]
(see \cite{MRUPB,MRUDB}).

\subsection{Previous results}
Banica, Carles, and Duyckaerts \cite{BCD} present the following result for \eqref{NLS}.

\begin{theorem}[\cite{BCD}]
\label{BCD}
Let $N=1$ or $2$, $V\in C^2(\mathbb{R}^N,\mathbb{R})$, and $g\in C^4(\mathbb{R}^N,\mathbb{R})$. Assume $\left(\frac{\partial}{\partial x}\right)^\beta V\in L^\infty(\mathbb{R}^N)\ (|\beta|\leq 2)$, $\left(\frac{\partial}{\partial x}\right)^\beta g\in L^\infty(\mathbb{R}^N)\ (|\beta|\leq 4)$, and
\[
g(0)=1,\quad \frac{\partial g}{\partial x_j}(0)=\frac{\partial^2 g}{\partial x_j\partial x_k}(0)=0\quad (1\leq j,k\leq N).
\]
Then there exist $T>0$ and a solution $u\in C((0,T),\Sigma^1)$ for \eqref{NLS} such that
\[
\left\|u(t)-\frac{1}{\lambda(t)^\frac{N}{2}}Q\left(\frac{x-x(t)}{\lambda(t)}\right)e^{i\frac{|x|^2}{4t}-i\theta\left(\frac{1}{t}\right)-itV(0)}\right\|_{\Sigma^1}\rightarrow 0\quad (t\searrow 0),
\]
where $\theta$ and $\lambda$ are continuous real-valued functions and $x$ is a continuous $\mathbb{R}^N$-valued function such that
\begin{align*}
&\theta(\tau)=\tau+o(\tau)\quad\mbox{as }\tau\rightarrow+\infty,\\
&\lambda(t)\sim t\mbox{ and }|x(t)|=o(t)\quad\mbox{as }t\searrow 0.
\end{align*}
\end{theorem}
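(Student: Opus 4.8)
The plan is to exploit the pseudo-conformal structure that already produces the explicit minimal mass blow-up solution $S$ in the unperturbed case of Section \ref{sec:Vconst}. First I would apply the pseudo-conformal transformation to \eqref{NLS}, passing to renormalized variables $s$ and $y$ with $s\sim 1/t$ and $y\sim x/\lambda$. Under this change of variables a solution blowing up like $\lambda(t)\sim t$ as $t\searrow 0$ corresponds to a solution of a transformed equation on $s\in[s_0,+\infty)$ that should converge to the solitary wave $Q(y)e^{is}$ as $s\to+\infty$. The decisive feature is that the potential $V$ and the deviation of $g$ from $1$ enter the transformed equation with coefficients that decay as $s\to+\infty$: the dilation $y=x/\lambda$ with $\lambda\to 0$ concentrates everything near $x=0$, where $g(0)=1$ and $\frac{\partial g}{\partial x_j}(0)=\frac{\partial^2 g}{\partial x_j\partial x_k}(0)=0$ annihilate the leading corrections, while the potential term is suppressed by a power of $\lambda$ coming from the time renormalization. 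Thus the problem reduces to constructing a solution of a nonautonomous perturbation of the mass-critical equation that is asymptotic to the ground state soliton.

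Second, I would build an approximate blow-up profile by a formal expansion in the small parameter (a power of $\lambda$, equivalently $1/s$), starting from $Q$ and solving at each order a linear elliptic system governed by the linearized operator $L=(L_+,L_-)$ around $Q$. Solvability at each order requires orthogonality against the kernel of $L$, spanned by the symmetry-generated directions $Q$, $\nabla Q$, $|y|^2Q$, and $\Lambda Q=\frac{N}{2}Q+y\cdot\nabla Q$; these solvability conditions are precisely what fix the modulation laws, producing $\theta(\tau)=\tau+o(\tau)$, $\lambda(t)\sim t$, and $|x(t)|=o(t)$, together with the phase $e^{i\frac{|x|^2}{4t}-i\theta(1/t)-itV(0)}$ recorded in the statement. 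The vanishing of the first and second derivatives of $g$ at the origin guarantees that the first obstructions vanish, so that the expansion can be carried far enough to make the residual error integrable in the estimate below.

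Third, with the approximate profile $\tilde u_n$ in hand, I would run a backward-in-time construction: fix a sequence $t_n\searrow 0$, solve \eqref{NLS} with data $\tilde u_n(t_n)$, and obtain solutions $u_n$ defined, after transformation, on expanding intervals. The heart of the argument is a uniform bootstrap: decompose $u_n=(\text{profile})+\varepsilon_n$, choose modulation parameters $(\lambda_n,x_n,\gamma_n,\dots)$ to impose orthogonality of $\varepsilon_n$ to the null directions of $L$, derive the modulation ODEs, and control $\|\varepsilon_n\|_{\Sigma^1}$ through mass conservation, a localized virial/energy functional adapted to the soliton, and the coercivity of $L$ modulo its kernel. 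The minimal-mass constraint $\|u\|_2=\|Q\|_2$ is exactly what makes this coercivity available and forces the solution into the unique critical-mass blow-up regime. A compactness argument as $n\to\infty$ then yields the desired $u\in C((0,T),\Sigma^1)$ with the stated convergence.

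The main obstacle I anticipate is the uniform $\Sigma^1$ control of the error $\varepsilon_n$. At critical mass the linearized energy is only coercive after projecting off the large kernel of $L$; the perturbation by $V$ and $g$ breaks the exact Hamiltonian/variational structure that underlies the unperturbed estimates; and in the low dimensions $N=1,2$ the slow spatial decay of the profile corrections, together with the weighted $|x|^2V$-type contributions in $\Sigma^1$, demands careful integrability bookkeeping. Closing the bootstrap — showing that the a priori bound on $\varepsilon_n$ improves itself on the whole interval uniformly in $n$ — is the crux; by contrast the profile construction, the modulation ODEs, and the final compactness step should be comparatively routine once the correct energy functional and orthogonality conditions have been selected.
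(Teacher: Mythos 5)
Note first that Theorem \ref{BCD} is not proved anywhere in this paper: it is quoted from Banica--Carles--Duyckaerts \cite{BCD} as background, and the paper's own contribution, Theorem \ref{theorem:EMBS}, is a strictly more general statement proved by a different method. So the fair comparison is between your sketch, the original argument of \cite{BCD}, and the paper's proof of the stronger theorem. Your plan is a hybrid of the two. The opening move --- the pseudo-conformal (lens) transform, reducing blow-up at $t=0$ to the construction of a solution of a non-autonomous perturbation of the critical equation that is asymptotic to $Q(y)e^{is}$ as $s\to\infty$, with $V$ and $g-1$ entering through decaying coefficients --- is precisely the strategy of \cite{BCD}. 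Your remaining steps (approximate profile solved order by order against the kernel of $(L_+,L_-)$, backward construction from $t_n\searrow 0$ with modulation, orthogonality, coercivity and bootstrap, then compactness) are the Rapha\"{e}l--Szeftel/Le Coz--Martel--Rapha\"{e}l scheme, which is what this paper actually runs for Theorem \ref{theorem:EMBS}, except that the paper dispenses with both the conformal transform and the higher-order profile: it works directly with the modulated ansatz of Lemma \ref{decomposition}, puts the entire potential contribution into the single error term $\Psi$ estimated in Proposition \ref{Psiesti}, controls $\varepsilon$ through the modified energy $S=H/\lambda^m$ of Section \ref{sec:MEF} (avoiding the localized virial/Morawetz functional you invoke), and handles the limit $n\to\infty$ via the continuous-dependence Lemma \ref{contidepend}. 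Each route buys something: the conformal reduction of \cite{BCD} makes the smallness of the perturbation transparent, while the paper's direct modulation-energy argument is exactly what removes the restrictions on dimension and on boundedness/smoothness of $V$ and $g$.

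One concrete error in your reasoning should be flagged: you attribute the hypothesis $N=1,2$ to ``slow spatial decay of the profile corrections'' and weighted integrability bookkeeping in $\Sigma^1$. That is not the reason. As the paper states in its comments on Theorem \ref{theorem:EMBS}, the restriction in \cite{BCD} comes from the lack of regularity of the nonlinearity $z\mapsto|z|^{4/N}z$: only for $N\le 2$ is it $C^2$, and both your step 2 (expanding the profile to higher order in $1/s$) and your energy estimates require exactly this smoothness of $f$ when Taylor-expanding $f(Q+\varepsilon)$. Your proposal as written would therefore silently use $4/N\ge 2$ without acknowledging it; in dimensions $N\ge 3$ it breaks down at that point, and circumventing this (via the decay properties of $Q$, Proposition \ref{GSP}) is one of the main technical points of the present paper.
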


This result means that if $V$ and $g$ are sufficiently smooth and bounded, then there exists a minimal mass solution that blows up at finite time with a blow-up rate $|t|^{-1}$. The blow-up rate is identical with the blow-up rate when $g=1$ and $V=0$.

Le Coz, Martel, and Rapha\"{e}l \cite{LMR} present the following result for
\begin{align}
\label{DPNLS}
i\frac{\partial u}{\partial t}+\Delta u+|u|^{\frac{4}{N}}u+|u|^{p-1}u=0,\quad (t,x)\in\mathbb{R}\times\mathbb{R}^N.
\end{align}

\begin{theorem}[\cite{LMR}]
Let $N=1,2,3$ and $1<p<1+\frac{4}{N}$. Then for any energy level $E_0\in\mathbb{R}$, there exist $t_0<0$ and a radially symmetric initial value $u_0\in H^1(\mathbb{R}^N)$ with
\[
\|u_0\|_2=\|Q\|_2,\quad E(u_0)=E_0
\]
such that the corresponding solution $u$ for \eqref{DPNLS} with $u(t_0)=u_0$ blows up at $t=0$ with a blow-up rate of
\[
\|\nabla u(t)\|_2=\frac{C(p)+o_{t\nearrow 0}(t)}{|t|^{\sigma}},
\]
where $\sigma=\frac{4}{4+N(p-1)}$ and $C(p)>0$.
\end{theorem}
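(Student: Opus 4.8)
The plan is to construct the solution backward in time by the energy-method/modulation scheme of Merle--Rapha\"{e}l type, rather than by an explicit formula, since the pseudo-conformal symmetry that produces $S$ is destroyed by the subcritical term $|u|^{p-1}u$. Because only a radially symmetric solution is required, the translation and Galilean invariances play no role, and the dynamics is governed by three modulation parameters: a scale $\lambda(t)>0$, a phase $\gamma(t)$, and a self-similar parameter $b(t)$. I would pass to renormalized variables $y=x/\lambda$, $\frac{ds}{dt}=\lambda^{-2}$, writing $u(t,x)=\lambda^{-N/2}v(s,y)e^{i\gamma}$, so that \eqref{DPNLS} becomes, up to terms proportional to the time derivatives of the modulation parameters, $iv_s+\Delta v-v+|v|^{4/N}v+ib\Lambda v+\lambda^{\kappa}|v|^{p-1}v=0$, where $\Lambda=\frac{N}{2}+y\cdot\nabla$ is the $L^2$-scaling generator and $\kappa=2-\frac{N(p-1)}{2}>0$.

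First I would build an approximate self-similar profile by solving the profile equation $\Delta Q_b-Q_b+|Q_b|^{4/N}Q_b+ib\Lambda Q_b=0$ order by order in $b$, obtaining $Q_b=Q+O(b)$, a radial deformation of the ground state carrying the oscillation $e^{-ib|y|^2/4}$ away from the origin. The crucial feature is that the subcritical term, which in renormalized variables carries the factor $\lambda^{\kappa}$, acts as a forcing that prevents exact self-similar closure and fixes the law of $(\lambda,b)$. At leading order the modulation system reduces to $-\frac{\lambda_s}{\lambda}=b$ together with a forced Riccati-type relation $b_s+b^2\approx c_0\lambda^{\kappa}$, with $c_0>0$ an explicit constant determined by $Q$ and $p$. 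Inserting the self-consistent ansatz $\lambda\sim c_1|t|^{\sigma}$ gives $b\sim c_1^2\sigma|t|^{2\sigma-1}$, hence $b_s+b^2\sim|t|^{4\sigma-2}$ while $\lambda^{\kappa}\sim|t|^{\sigma\kappa}$; matching powers forces $4\sigma-2=\sigma\kappa$, i.e. $\sigma=\frac{2}{4-\kappa}=\frac{4}{4+N(p-1)}$, which is precisely the claimed exponent. The constant $c_1>0$, and thus $C(p)=\|\nabla Q\|_2/c_1>0$, is then pinned down by the same balance.

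With the profile and the law in hand, I would decompose a genuine solution as $u(t,x)=\lambda^{-N/2}(Q_b+\varepsilon)(s,x/\lambda)e^{i\gamma}$, impose orthogonality conditions on $\varepsilon$ to fix $(\lambda,\gamma,b)$ and to quotient out the generalized null space of the operator $L=(L_+,L_-)$ linearizing around $Q$, and derive the modulation equations together with the evolution of $\varepsilon$. The heart of the argument is a uniform control of $\varepsilon$ up to the singular time, which I would extract from a mixed energy/localized-virial Lyapunov functional whose coercivity rests on the spectral subcoercivity of $L$ modulo the symmetry directions, as in Rapha\"{e}l--Szeftel and Le Coz--Martel--Rapha\"{e}l. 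Running a bootstrap on $(t_0,t_n)$ for backward Cauchy data posed at $t_n\nearrow 0$ equal to the approximate profile, I would show the parameters remain on the prescribed law and $\|\varepsilon(t)\|_{H^1}$ stays below a fixed power of $|t|$; a compactness argument then produces a limit solution $u$ on $(t_0,0)$ that blows up at $t=0$, has $\|u\|_2=\|Q\|_2$ by the minimal-mass construction, and satisfies $\|\nabla u(t)\|_2=\frac{C(p)+o_{t\nearrow 0}(t)}{|t|^{\sigma}}$.

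It remains to realize an arbitrary energy $E_0$. Since the mass is frozen at $\|Q\|_2$ and the leading blow-up rate is rigidly determined by $p$, the energy must be carried by a lower-order, tunable degree of freedom in the profile/law, namely an integration constant in the subleading expansion, equivalently the choice of the parameter value and of $t_0$ at which the backward data are posed. I would make the dependence of the conserved energy on this parameter explicit, show it is continuous and surjective onto $\mathbb{R}$, and invoke an intermediate-value argument to hit any prescribed $E_0$. The main obstacle I anticipate is exactly the uniform-in-$n$ coercivity and closure of the bootstrap for $\varepsilon$ near the blow-up time: the forcing $\lambda^{\kappa}|u|^{p-1}u$ is a genuinely non-self-similar term that must be shown not to destabilize the soliton, and verifying that it is absorbed by the energy estimate while the spectral gap of $L$ persists in dimensions $N\le 3$ across the whole range $1<p<1+\frac{4}{N}$ is where the real work lies.
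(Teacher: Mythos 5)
The paper does not actually prove this statement: it is quoted as background from \cite{LMR}, and the paper's own analysis (the proof of Theorem \ref{theorem:EMBS}) only adapts that method to equation \eqref{NLS}, where the law of the parameters is different. Judged on its own, your outline is a faithful reconstruction of the route of \cite{LMR}, and its quantitative heart is correct: the rescaled subcritical term carries $\lambda^{2-\frac{N(p-1)}{2}}$, the modulation system closes on $-\frac{\lambda_s}{\lambda}=b$ together with the forced Riccati relation $b_s+b^2\approx c_0\lambda^{2-\frac{N(p-1)}{2}}$, and matching powers along $\lambda\sim c_1|t|^\sigma$ forces $4\sigma-2=\sigma\left(2-\tfrac{N(p-1)}{2}\right)$, i.e. $\sigma=\frac{4}{4+N(p-1)}$, with $C(p)=\|\nabla Q\|_2/c_1$. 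It is worth contrasting your route with the simplified scheme the present paper runs for its own theorem: because the potential only produces the error $\Psi=O(\lambda^2(\lambda+|w|))$, which does not perturb the law $b_s+b^2\approx 0$, the author can decompose $v=Q+\varepsilon$ with no approximate-profile construction at all, carry the quadratic phase in the decomposition, and replace the localized Morawetz/virial functional of \cite{LMR,RSEU} (and its truncation) by the modified energy $H$ containing the term $\epsilon_2b^2\left\||y|\varepsilon\right\|_2^2$, working directly in $\Sigma^1$. In the double-power setting this simplification is not available: the forcing genuinely changes the blow-up rate, so the order-by-order profile construction $Q_b$ you describe is indispensable to push the error to a size the bootstrap can absorb. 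The one step of your sketch that is substantially thinner than a proof is the energy realization: since $b^2/\lambda^2$ and $\lambda^{-\frac{N(p-1)}{2}}$ are of the same divergent order as $t\nearrow 0$, one must show their leading parts cancel along the constructed law, that the finite remainder (the conserved quantity of the $(b,\lambda)$ system, which is what encodes $E(u)$) depends continuously on the free integration constant, and that this dependence survives the limiting compactness argument uniformly; that is where \cite{LMR} spends real effort, and where the restrictions $N\le 3$ and $1<p<1+\frac{4}{N}$ actually enter.
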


This result means that the attractive nonlinearity $|u|^{p-1}u$ affects blow-up rates of blow-up solutions with critical mass. Moreover, for any energy level, there exists a blow-up solution with critical mass and the energy.

\subsection{Main result}
For the potential $V$, we consider the following:
\begin{align}
\label{pote2}
&V\in C^{1,1}_{\mathrm{loc}}\mathbb{R}^N,\\
\label{V1growth}
&\nabla V=O(|x|),\\
\label{V2growth}
&\nabla^2 V=O(|x|^r)\quad \mbox{for some }r\geq 0.
\end{align}
For the function $g$, we consider the following:
\begin{align}
\label{g2}
&g\in C^{3,1}_{\mathrm{loc}}\mathbb{R}^N,\\
\label{gflat}
&g(0)=1,\quad \frac{\partial g}{\partial x_j}(0)=\frac{\partial^2 g}{\partial x_j\partial x_k}(0)=0\quad (1\leq j,k\leq N),\\
\label{gint}
&g,\nabla g,x\cdot\nabla g\in L^\infty(\mathbb{R}^N),\\
\label{ggrowth}
&\nabla^3g,\nabla^4g=O(|x|^{r_g})\quad \mbox{for some }r_g\geq 0
\end{align}
The main result of this paper is the following, which gives an extension of Theorem \ref{BCD}.

\begin{theorem}[Existence of a minimal mass blow-up solution]
\label{theorem:EMBS}
Let the potential $V$ satisfy \eqref{pote1'}, \eqref{pote2}, \eqref{V1growth}, and \eqref{V2growth}. Let the function $g$ satisfy \eqref{g2}, \eqref{gflat}, \eqref{gint}, and \eqref{ggrowth}. Then there exist $t_0<0$ and a radial initial value $u_0\in \Sigma^1$ with $\|u_0\|_2=\|Q\|_2$ such that the corresponding solution $u$ for \eqref{NLS} with $u(t_0)=u_0$ blows up at $t=0$. Moreover,
\[
\left\|u(t,x)-\frac{1}{\lambda(t)^\frac{N}{2}}Q\left(\frac{x+w(t)}{\lambda(t)}\right)e^{-i\frac{b(t)}{4}\frac{|x+w(t)|^2}{\lambda(t)^2}+i\gamma(t)}\right\|_{\Sigma^1}\rightarrow 0\quad (t\nearrow 0)
\]
holds for some $C^1$ functions $\lambda:(t_0,0)\rightarrow(0,\infty)$, $b,\gamma:(t_0,0)\rightarrow\mathbb{R}$, and $w:(t_0,0)\rightarrow\mathbb{R}^N$ such that
\[
\lambda(t)=|t|\left(1+o(1)\right),\quad b(t)=|t|\left(1+o(1)\right),\quad \gamma(t)\sim |t|^{-1},\quad |w(t)|=o(|t|)
\]
as $t\nearrow 0$.
\end{theorem}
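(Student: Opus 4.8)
The plan is to follow the modulation-and-bootstrap scheme of Rapha\"el--Szeftel and Le Coz--Martel--Rapha\"el, adapting it to accommodate the potential $V$ and the inhomogeneity $g$ under the weak hypotheses \eqref{pote2}--\eqref{V2growth} and \eqref{g2}--\eqref{ggrowth}. First I would pass to renormalized self-similar variables: introducing modulation parameters $\lambda(s)>0$, $b(s),\gamma(s)\in\mathbb{R}$, $w(s)\in\mathbb{R}^N$ and a rescaled time $s$ defined by $ds/dt=\lambda^{-2}$, I write
\[
u(t,x)=\frac{1}{\lambda(t)^{N/2}}\,v\!\left(s,\frac{x+w}{\lambda}\right)e^{i\gamma},\qquad y:=\frac{x+w}{\lambda},
\]
so that \eqref{NLS} becomes a perturbed self-similar equation for $v$. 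The crucial structural point is that after this rescaling the potential term contributes $-\lambda^2 V(\lambda y-w)\,v$ and the nonlinearity carries $g(\lambda y-w)$, both seen on the concentration scale $\lambda\to0$. The flatness \eqref{gflat} together with \eqref{gint}, \eqref{ggrowth} forces the inhomogeneous correction to enter only at high order in $\lambda$, while the at-most-quadratic growth of $V$ implied by \eqref{V1growth}--\eqref{V2growth} keeps $\lambda^2 V(\lambda y-w)$ small on the (exponentially localized) bulk of the profile.

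Next I would construct an approximate blow-up profile $Q_b$ refining the ground state, seeking $Q_b=Q+b^2(\text{correction})+\cdots$ that solves, to high order, the stationary self-similar problem
\[
-\Delta Q_b+Q_b-ib\,\Lambda Q_b-|Q_b|^{4/N}Q_b=\Psi_b,\qquad \Lambda:=\tfrac{N}{2}+y\cdot\nabla,
\]
with error $\Psi_b$ small in the sense that $\|\Psi_b\|_{\Sigma^1}=O(b^{K})$ for $K$ large. The corrections are obtained by inverting the linearized operator $L$ on its stable directions, exactly as in the $V=0$, $g=1$ theory, since at leading order neither the potential nor the inhomogeneity enters; their effect is pushed into the equation for the remainder and controlled by the growth/flatness assumptions. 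One checks $\|Q_b\|_2=\|Q\|_2+O(b^2)$, a discrepancy to be absorbed by a final normalization so that the constructed solution has mass exactly $\|Q\|_2$.

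Decomposing $v=Q_b+\varepsilon$ and imposing one orthogonality condition on $\varepsilon$ per modulation parameter (exploiting the generalized nullspace of $L$), I obtain the modulation system, whose leading order reads
\[
\left|\frac{\lambda_s}{\lambda}+b\right|+\left|b_s+b^2\right|+\left|\gamma_s-1\right|+\frac{|w_s|}{\lambda}\lesssim \|\varepsilon\|_{\mathrm{loc}}+\text{(profile and }V,g\text{ errors)},
\]
the motion of the center $w$ being driven by $\nabla V$ evaluated near the concentration point $x\approx-w$, whose size is controlled by \eqref{V1growth}. Integrating this system against $ds/dt=\lambda^{-2}$ yields $b\sim\lambda\sim 1/s$ and $s\sim|t|^{-1}$, hence the claimed laws $\lambda(t)=|t|(1+o(1))$, $b(t)=|t|(1+o(1))$, $\gamma(t)\sim|t|^{-1}$, $|w(t)|=o(|t|)$, provided $\varepsilon$ stays small.

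The heart of the argument, and the step I expect to be the main obstacle, is the uniform control of $\varepsilon$ in $\Sigma^1$. I would build a mixed energy--virial Lyapunov functional $\mathcal{F}(\varepsilon)$, coercive modulo the orthogonality directions by the spectral properties of $L$, and prove a monotonicity formula of the form $\tfrac{d}{ds}\big(\mathcal{F}/\lambda^{2}\big)\lesssim(\text{small})$. Every error term generated by $V$ and $g$ must then be shown to be integrable in $s$ and compatible with the bootstrap hypothesis; this is precisely where \eqref{V1growth}--\eqref{V2growth}, \eqref{ggrowth}, and the flatness \eqref{gflat} are used, the localization $\lambda\to0$ suppressing the large-$|x|$ behavior of $V$ and of $\nabla^k g$. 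Finally I would run a backward-in-time construction: solve \eqref{NLS} with data at $t_n\nearrow0$ given exactly by the renormalized profile (so $\varepsilon(t_n)=0$), deduce from the bootstrap bounds on $[t_0,t_n]$ that are uniform in $n$, and extract a limit solution on $(t_0,0)$ by compactness. The exact critical mass $\|u\|_2=\|Q\|_2$ and the $\Sigma^1$-convergence to the profile then follow from the uniform estimates together with the mass and energy conservation laws.
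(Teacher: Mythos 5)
Your outline reproduces the paper's overall scheme (renormalized variables with parameters $\lambda,b,\gamma,w$, orthogonality conditions, modulation estimates, a coercive energy--virial functional with a bootstrap, and a backward construction from data at $t_n\nearrow 0$ followed by compactness), but at the two decisive technical points it takes the classical route of \cite{RSEU,LMR,BCD}, which is precisely what the paper is designed to avoid. You build a refined profile $Q_b=Q+b^2(\cdots)$ by inverting the linearized operator and then control $\varepsilon$ with a ``mixed energy--virial'' functional; the paper instead takes the profile to be exactly $Q$ and factors the full pseudo-conformal phase $e^{-i b|y|^2/4}$ into the decomposition (Lemma \ref{decomposition}). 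This is not cosmetic: with the phase factored out, no term $-ib\Lambda\varepsilon$ (coefficient of size $b\sim 1/s$) appears in the equation for $\varepsilon$ (Lemma \ref{theorem:epsieq}); every remaining term with a growing weight ($\Lambda\varepsilon$, $|y|^2\varepsilon$, $y\varepsilon$) carries a modulation coefficient such as $\frac{1}{\lambda}\frac{\partial\lambda}{\partial s}+b$ or $\frac{\partial b}{\partial s}+b^2$, which is $O(s^{-3})$ by Lemma \ref{modtermesti}. That structure is what lets the paper use the untruncated functional $H$ containing $\epsilon_2 b^2\||y|\varepsilon\|_2^2$ and work directly in $\Sigma^1$, whereas your functional, facing $-ib\Lambda\varepsilon$, would need the truncated Morawetz correction of \cite{RSEU,LMR}. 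Your route is viable in principle, but it is the heavier machinery the paper dispenses with.

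Two points in your write-up are genuine gaps as stated. First, your profile equation $-\Delta Q_b+Q_b-ib\Lambda Q_b-|Q_b|^{4/N}Q_b=\Psi_b$ freezes $b$, and then the claim $\|\Psi_b\|_{\Sigma^1}=O(b^K)$ is not the relevant one: along the flow $\frac{\partial b}{\partial s}\approx-b^2$, so the term $i\frac{\partial b}{\partial s}\partial_bQ_b\approx-\frac{b^2|y|^2}{4}Q$ enters the remainder equation as an uncancelled error of size $b^2\sim s^{-2}$, exactly the same order as $b^2$ itself; with such an error the law $\frac{\partial b}{\partial s}+b^2\approx 0$, hence $b\sim 1/s$ and the $|t|^{-1}$ rate, cannot be closed (compare the paper's \eqref{defbesti}, which needs this quantity to be $O(s^{-2L})$ with $2L>3$). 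The fix is to include the $-b^2\frac{|y|^2}{4}Q_b$ term (equivalently the $i\frac{\partial b}{\partial s}\partial_bQ_b$ coupling) in the profile equation, whereupon the exact solution is $Qe^{-ib|y|^2/4}$ and your construction collapses to the paper's decomposition --- there is nothing left to expand. Second, ``extract a limit solution by compactness'' hides a real difficulty: to transfer the uniform bounds from $u_n$ to the limit one needs convergence of $u_n(t)$ on the whole interval $[t_0,T')$, i.e.\ a continuous-dependence statement; the $H^s$ ($s<1$) continuous dependence used in \cite{LMR} is not known for \eqref{NLS} under the weak hypotheses \eqref{pote1'}--\eqref{V2growth}, and the paper proves a Strichartz-based substitute (Lemma \ref{contidepend} in Appendix \ref{sec:SFacts}) for exactly this reason. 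A minor further point: since the construction integrates backward from $s_n$ where $\varepsilon=0$, the usable monotonicity is a lower bound $\frac{d}{ds}S\gtrsim-(\text{small})$ as in Lemma \ref{Sdef}; the upper bound $\frac{d}{ds}(\mathcal{F}/\lambda^2)\lesssim(\text{small})$ you state gives no control of $\mathcal{F}(s)$ for $s<s_n$.
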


\begin{remark}
In contrast, if $V$ satisfies \eqref{pote1}, then any subcritical mass solution for \eqref{NLS} exists globally in time and is bounded in $H^1(\mathbb{R}^N)$. This can be proved easily by the Gagliardo-Nirenberg inequality and the Sobolev embedding theorem. Therefore, the solution in Theorem \ref{theorem:EMBS} is a minimal mass blow-up solution.
\end{remark}

\begin{remark}
Since begin locally Lipschitz continuous and belonging to $W^{1,\infty}_{\mathrm{loc}}$ are equivalent, if $V$ satisfies \eqref{pote2}, then $\nabla V$ and $\nabla^2 V$ are bounded near the origin.
\end{remark}

\subsection{Outline of proof}
We prove Theorem \ref{theorem:EMBS} by using a simplified version with modification of the method of Le Coz, Martel, and Rapha\"{e}l \cite{LMR}, which is based on seminal work of Rapha\"{e}l and Szeftel \cite{RSEU}. We proceed in the following steps:

\begin{itemize}
\item[Step 1.] For a solution $u$ for \eqref{NLS}, we consider the following transformation:
\[
u(t,x)=\frac{1}{\lambda(s)^\frac{N}{2}}v\left(s,y\right)e^{-i\frac{b(s)|y|^2}{4}+i\gamma(s)},\quad y=\frac{x+w(s)}{\lambda(s)},\quad \frac{ds}{dt}=\frac{1}{\lambda(s)^2}.
\]
\item[Step 2.] Let $v=Q+\varepsilon$ for some error function $\varepsilon$. Then we obtain the equation of $\varepsilon$ (Lemmas \ref{decomposition} and \ref{theorem:epsieq}):
\begin{align*}
0=&\ i\frac{\partial \varepsilon}{\partial s}+\Delta \varepsilon-\varepsilon+g(\lambda y-w)|Q+\varepsilon|^\frac{4}{N}(Q+\varepsilon)-Q^{1+\frac{4}{N}}-\lambda^2 V(\lambda y-w)\varepsilon\\
&\hspace{20pt}+\mbox{modulation terms}+\mbox{an error term}.
\end{align*}
\item[Step 3.] By using the modulation terms and $\varepsilon$, we estimate the parameters $\lambda$, $b$, $\gamma$, and $w$ (from Section \ref{sec:uniesti} to Section \ref{sec:convesti}).
\item[Step 4.] We construct a sequence of suitable solutions for \eqref{NLS} and show that the limit of the sequence is the desired minimal mass blow-up solution (Section \ref{sec:proof}).
\end{itemize}

\subsection{Comments on Theorem \ref{theorem:EMBS}}
Firstly, the assumptions in Theorem \ref{theorem:EMBS} are weaker than those in Theorem \ref{BCD}. Theorem \ref{theorem:EMBS} has no restrictions on spatial dimensions. On the other hand, according to the lack of regularity of the nonlinearity $|u|^{\frac{4}{N}}u$, Theorem \ref{BCD} requires the restriction $N=1$ or $2$. Although Theorem \ref{theorem:EMBS} is also affected by the lack of regularity, we overcome this difficulty by using the properties of the ground state. Regarding potentials, Theorem \ref{theorem:EMBS} requires less differentiability and integrability than Theorem \ref{BCD}. Indeed, any $V\in C^2(\mathbb{R}^N)\cap W^{2,\infty}(\mathbb{R}^N)$ satisfies \eqref{pote1'}, \eqref{pote2}, \eqref{V1growth}, and \eqref{V2growth}. On the other hand, there exists $V$ such that it satisfies \eqref{pote1'}, \eqref{pote2}, \eqref{V1growth}, and \eqref{V2growth} but does not satisfy the assumption in Theorem \ref{BCD}, e.g., 
\[
V(x):=\frac{1}{1+x^2}\cos(x^4).
\]
When $V$ does not satisfy \eqref{pote2}, blow-up rates should change as the result of Le Coz, Martel, and Rapha\"{e}l \cite{LMR}. Regarding the function $g$, Theorem \ref{theorem:EMBS} does not require higher-order derivatives to be bounded. For example, Theorem \ref{theorem:EMBS} applies to the following function:
\[
g(x):=\frac{1}{1+x^4}\cos(x^4).
\]

Secondly, we improve some parts of the arguments in Le Coz, Martel, and Rapha\"{e}l \cite{LMR} and Rapha\"{e}l and Szeftel \cite{RSEU}. Although the authors of \cite{LMR,RSEU} introduce the Morawetz functional (\cite[Section 5]{LMR} and \cite[Lemma 3.3]{RSEU}) and apply a \textit{truncation} procedure to the functional, we avoid using the functional by modifying the definition of $\varepsilon$. As a result, without the truncation, we work directly in the virial space $\Sigma^1$. Moreover, the authors of \cite{LMR} use the continuous dependence on the initial value for \eqref{DPNLS} in $H^s(\mathbb{R}^N)$ for some $s\in [0,1)$. Although this continuous dependence is an important fact in the proof of the main result in \cite{LMR}, it is not obvious for \eqref{NLS}. Therefore, instead of proving the continuous dependence for \eqref{NLS} in $H^s(\mathbb{R}^N)$ for some $s\in [0,1)$, we use Lemma \ref{contidepend} in Appendix \ref{sec:SFacts}, which gives a kind of the continuous dependence. Consequently, we provide a simpler and more general proof.

\section{Notation and preliminaries}
\label{sec:Preliminaries}
We define
\begin{align*}
&(u,v)_2:=\re\int_{\mathbb{R}^N}u(x)\overline{v}(x)dx,\quad \left\|u\right\|_p:=\left(\int_{\mathbb{R}^N}|u(x)|^pdx\right)^\frac{1}{p},\\
&f(z):=|z|^\frac{4}{N}z,\quad  F(z):=\frac{1}{2+\frac{4}{N}}|z|^{2+\frac{4}{N}}\quad \mbox{for $z\in\mathbb{C}$}.
\end{align*}
By identifying $\mathbb{C}$ with $\mathbb{R}^2$, we denote the differentials of $f$ and $F$ by $df$ and $dF$, respectively. We define
\[
\Lambda:=\frac{N}{2}+x\cdot\nabla,\quad L_+:=-\Delta+1-\left(1+\frac{4}{N}\right)Q^\frac{4}{N},\quad L_-:=-\Delta+1-Q^\frac{4}{N}.
\]
Namely, $\Lambda$ is the generator of $L^2$-scaling, and $L_+$ and $L_-$ come from the linearised Schr\"{o}dinger operator to close $Q$. Then
\[
L_-Q=0,\quad L_+\Lambda Q=-2Q,\quad L_-|x|^2Q=-4\Lambda Q,\quad L_+\rho=|x|^2 Q,\quad L_-xQ=-\nabla Q
\]
hold, where $\rho\in\mathcal{S}(\mathbb{R}^N)$ is the unique radial solution for $L_+\rho=|x|^2 Q$. Furthermore, there exists $\mu>0$ such that for any $u\in H^1(\mathbb{R}^N)$,
\begin{align}
\label{Lcoer}
&\left\langle L_+\re u,\re u\right\rangle+\left\langle L_-\im u,\im u\right\rangle\nonumber\\
\geq&\ \mu\left\|u\right\|_{H^1}^2-\frac{1}{\mu}\left({(\re u,Q)_2}^2+\left|(\re u,xQ)_2\right|^2+{(\re u,|x|^2 Q)_2}^2+{(\im u,\rho)_2}^2\right)
\end{align}
holds (see, e.g., \cite{MRO,MRUPB,RSEU,WL}). Finally, we use the notation $\lesssim$ and $\gtrsim$ when the inequalities hold up to a positive constant. We also use the notation $\approx$ when $\lesssim$ and $\gtrsim$ hold.

For the ground state $Q$, the following property holds:

\begin{proposition}[E.g., \cite{LMR}]
\label{GSP}
For any multi-index $\alpha$, there exist $C_\alpha,\kappa_\alpha>0$ such that
\[
\left|\left(\frac{\partial}{\partial x}\right)^\alpha Q(x)\right|\leq C_\alpha Q(x),\quad \left|\left(\frac{\partial}{\partial x}\right)^\alpha \rho(x)\right|\leq C_\alpha(1+|x|)^{\kappa_\alpha} Q(x).
\]
\end{proposition}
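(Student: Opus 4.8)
The plan is to reduce both estimates to the classical exponential decay of $Q$ combined with interior elliptic estimates, treating $Q$ by a soft Harnack-plus-bootstrap argument and $\rho$ through its radial ODE. Since $Q\in H^1$ solves $(-\Delta+1)Q=Q^{1+\frac{4}{N}}$, a standard bootstrap gives $Q\in C^\infty$ and $Q(x)\to 0$ as $|x|\to\infty$. Rewriting the equation as $-\Delta Q+(1-Q^{\frac{4}{N}})Q=0$ with bounded coefficient $c:=1-Q^{\frac{4}{N}}$, $\|c\|_\infty\le 1$, I would apply the Harnack inequality on unit balls to obtain a constant $C_H$, depending only on $N$, with $\sup_{B(x,1)}Q\le C_H\,Q(x)$ for every $x$; that is, $Q$ is comparable to itself on unit balls, uniformly, which is exactly what replaces any need for sharp asymptotics.

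I would then prove $|\partial^\alpha Q|\le C_\alpha Q$ by induction on $m=|\alpha|$, the case $m=0$ being trivial. Assuming the bound for all orders $\le m-1$, differentiate the equation: for $|\alpha'|=m-1$ one has $-\Delta(\partial^{\alpha'}Q)=\partial^{\alpha'}(Q^{1+\frac{4}{N}})-\partial^{\alpha'}Q$, and by the Leibniz rule, the inductive hypothesis, and the local comparability the right-hand side is bounded on $B(x,1)$ by $CQ(x)$. Interior $W^{2,p}$ estimates together with the Sobolev embedding $W^{2,p}\hookrightarrow C^1$ for $p>N$ then give $\|\partial^{\alpha'}Q\|_{C^1(B(x,1/2))}\le CQ(x)$, hence $|\partial^\alpha Q(x)|\le C_\alpha Q(x)$, closing the induction.

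For $\rho$, which is radial and solves $-\Delta\rho+\rho=(1+\tfrac{4}{N})Q^{\frac{4}{N}}\rho+|x|^2Q$, I would first establish the pointwise bound $|\rho(x)|\le C(1+|x|)^{\kappa}Q(x)$ and then transfer it to derivatives by the same elliptic induction, noting that the source $|x|^2Q$, the term $Q^{\frac{4}{N}}\rho$, and their derivatives are all bounded by polynomial-times-$Q$. The pointwise bound comes from the radial ODE $-\rho''-\frac{N-1}{r}\rho'+\rho=(1+\tfrac{4}{N})Q^{\frac{4}{N}}\rho+r^2Q$ by variation of parameters: the homogeneous equation $-\phi''-\frac{N-1}{r}\phi'+\phi=0$ has decaying and growing solutions behaving like $r^{-(N-1)/2}e^{\mp r}$, and since the source $r^2Q\approx r^{2-(N-1)/2}e^{-r}$ decays at the same exponential rate as the decaying solution, the variation-of-parameters integral is resonant and produces an extra algebraic factor, giving $|\rho(r)|\lesssim r^{\kappa}e^{-r}\lesssim(1+r)^{\kappa}Q(r)$ for finite $\kappa$; the exponentially small term $Q^{\frac{4}{N}}\rho$ is absorbed by a perturbation argument near infinity, and $\rho\in\mathcal{S}$ guarantees the growing mode is absent.

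The main obstacle is the $\rho$ estimate. The soft Harnack argument used for $Q$ does not apply, since $\rho$ is not sign-definite and, more importantly, the source $|x|^2Q$ decays at exactly the exponential rate of the decaying homogeneous solution of $L_+$ at infinity. This resonance is precisely what forces the polynomial weight $(1+|x|)^{\kappa_\alpha}$ rather than a clean bound by $Q$, and its rigorous treatment requires the explicit variation-of-parameters analysis above (equivalently, a fixed point in a weighted exponential space), together with care that the full operator $L_+$, including the $Q^{\frac{4}{N}}$ term near the origin, does not spoil the estimate.
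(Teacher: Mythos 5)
Your proposal is sound, but note that the paper does not actually prove Proposition \ref{GSP}: it is quoted from the literature (``e.g.,~\cite{LMR}''), where the standard route goes through the sharp ODE asymptotics $Q(r)=c\,r^{-(N-1)/2}e^{-r}(1+o(1))$, the analogous expansions for derivatives, and weighted variation-of-parameters estimates for $L_+$. Your treatment of $\rho$ is essentially that standard argument: the resonance between the source $|x|^2Q$ and the decaying homogeneous solution is indeed what forces the polynomial weight, and $\rho\in\mathcal{S}(\mathbb{R}^N)$ is what excludes the growing mode. Your treatment of $Q$ is genuinely different and more elementary: uniform Harnack self-comparability $\sup_{B(x,1)}Q\le C_H\,Q(x)$ plus interior $W^{2,p}$ estimates and induction replaces any asymptotic expansion, needs only $Q>0$ and boundedness of $1-Q^{4/N}$, and would survive in non-radial generalizations; this is a clean alternative to the ODE proof.

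Three points need care to make the argument airtight. First, in the induction for $Q$, the right-hand side $\partial^{\alpha'}\bigl(Q^{1+4/N}\bigr)$ must be expanded by Fa\`a di Bruno: the terms are $c_k\,Q^{1+4/N-k}\prod_i\partial^{\beta_i}Q$, and the factors $Q^{1+4/N-k}$ are singular as $Q\to 0$ (the map $t\mapsto t^{1+4/N}$ is smooth only on $t>0$); they are rescued exactly because each derivative factor contributes a full power of $Q$ by the inductive hypothesis, so the product is $\lesssim Q^{1+4/N}$. This cancellation, which uses $Q>0$ essentially, should be stated rather than absorbed into ``the Leibniz rule.'' Second, the variation-of-parameters analysis yields $|\rho(r)|\lesssim r^{\kappa}e^{-r}$, and converting this into $(1+r)^{\kappa'}Q(r)$ requires the \emph{lower} bound $Q(r)\gtrsim r^{-(N-1)/2}e^{-r}$, with $\kappa'$ absorbing the extra factor $r^{(N-1)/2}$; so two-sided sharp asymptotics of $Q$ do re-enter for the $\rho$ estimate, contrary to your remark that the Harnack argument dispenses with them (it does so only for the $Q$ estimates). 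Third, the radial ODE analysis is legitimate only on $r\ge r_0>0$, since the origin is a singular point of the radial Laplacian; the region $r\le r_0$ is handled trivially because there $Q\gtrsim 1$ and $\rho$ is smooth. With these standard repairs your proof is complete and is a valid substitute for the cited one.
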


We estimate the error term $\Psi$ that is defined by
\[
\Psi(y):=\lambda^2V(\lambda y-w)Q(y).
\]
Without loss of generality, we may assume that $V(0)=0$ (see Section \ref{sec:Vconst}).

\begin{proposition}[Estimate of $\Psi$]
\label{Psiesti}
There exists a sufficiently small constant $\epsilon'>0$ such that 
\[
\left\|e^{\epsilon'|y|}\Psi\right\|_2+\left\|e^{\epsilon'|y|}\nabla\Psi\right\|_2\lesssim\lambda^2(\lambda+|w|)
\]
for $0<\lambda\ll 1$ and $w\in\mathbb{R}^N$. Moreover, for any radial function $\varphi\in L^2(\mathbb{R}^N)$,
\[
\left|(\Psi,\varphi)_2\right|\lesssim\lambda^2|w|+\lambda^4
\]
\end{proposition}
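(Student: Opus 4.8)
The plan is to reduce the whole statement to two ingredients: a pointwise quadratic bound on $V$ coming from $V(0)=0$ together with \eqref{V1growth}, and the exponential decay of the ground state together with Proposition \ref{GSP}. Everything else is bookkeeping of powers of $\lambda$ and $|w|$.

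First I would record that $|V(z)|\lesssim|z|^2$ for all $z$. Since $V(0)=0$, the fundamental theorem of calculus gives $V(z)=\int_0^1\nabla V(tz)\cdot z\,dt$, and \eqref{V1growth} yields $|\nabla V(tz)|\lesssim t|z|$, whence $|V(z)|\lesssim|z|^2$ and, directly from \eqref{V1growth}, $|\nabla V(z)|\lesssim|z|$. Applying these with $z=\lambda y-w$ and using $|\lambda y-w|\lesssim\lambda|y|+|w|$, I obtain the pointwise bounds $|\Psi(y)|\lesssim\lambda^2(\lambda^2|y|^2+|w|^2)Q(y)$ and, via the product rule $\nabla\Psi=\lambda^3(\nabla V)(\lambda y-w)Q+\lambda^2V(\lambda y-w)\nabla Q$ together with $|\nabla Q|\lesssim Q$ (Proposition \ref{GSP}), $|\nabla\Psi(y)|\lesssim\lambda^2(\lambda|y|+|w|)(\lambda+\lambda|y|+|w|)Q(y)$. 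In each case the right-hand side is a finite sum of terms of the form $\lambda^{2+j}|w|^k|y|^mQ(y)$ with $j+k\geq1$.

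The weighted $L^2$ estimate then follows by noting that, for $\epsilon'>0$ small enough, $\|e^{\epsilon'|y|}|y|^mQ\|_2$ is a finite constant for every $m$, since $Q$ decays exponentially. Summing the contributions and using $0<\lambda\ll1$ (so $\lambda^2\leq\lambda$) together with $|w|\lesssim1$ (so $|w|^2\leq|w|$) in the regime of interest collapses the various terms into the claimed bound $\lambda^2(\lambda+|w|)$. For the refined pairing against a radial $\varphi$, the gain of one power of $\lambda$ is a symmetry effect: I would Taylor expand $V(\lambda y-w)=V(-w)+\lambda\nabla V(-w)\cdot y+R(y)$, where $R(y)=\lambda\int_0^1[\nabla V(-w+t\lambda y)-\nabla V(-w)]\cdot y\,dt$ satisfies $|R(y)|\lesssim\lambda^2|y|^2(|w|+\lambda|y|)^r$ by \eqref{V2growth}. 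Because $Q\overline{\varphi}$ is radial, the linear term integrates to zero, $\int y\,Q(y)\overline{\varphi(y)}\,dy=0$, leaving $(\Psi,\varphi)_2=\lambda^2V(-w)(Q,\varphi)_2+\lambda^2\int R\,Q\overline{\varphi}$; the first summand is $\lesssim\lambda^2|w|^2\|\varphi\|_2$, while the second, after bounding $(|w|+\lambda|y|)^r\lesssim(1+|y|)^r$ and absorbing it into the exponentially decaying weight, is $\lesssim\lambda^4\|\varphi\|_2$, which together give $\lambda^2|w|+\lambda^4$.

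The only genuine obstacle is the cancellation in the refined estimate: one must isolate the exact $y$-linear term of $V(\lambda y-w)$ before estimating, rather than bounding $V$ crudely, since only that term integrates to zero against the radial weight, and the growth hypothesis \eqref{V2growth} is precisely what controls the resulting second-order remainder. The remaining point requiring care is that the polynomial factors $|y|^m$ and $|y|^r$ produced by the expansions must be absorbed by the exponential decay of $Q$, which forces $\epsilon'$ to be chosen small; all other manipulations are routine once the pointwise bounds above are in hand.
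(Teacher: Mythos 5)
Your overall strategy --- Taylor expansion of $V$, the radial cancellation $(yQ,\varphi)_2=0$, and absorption of polynomial weights by the exponential decay of $Q$ via Proposition \ref{GSP} --- is exactly the paper's, and your variant of the second estimate (expanding around $-w$ so that the $y$-linear term $\lambda^3\nabla V(-w)\cdot y$ drops out, rather than the paper's expansion around $0$, which kills $\lambda^3 y\cdot\nabla V(0)$) is a legitimate equivalent. However, there is a genuine flaw at your very first step: you read \eqref{V1growth} as the global pointwise bound $|\nabla V(x)|\lesssim|x|$, which forces $\nabla V(0)=0$ and gives you $|V(z)|\lesssim|z|^2$. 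That is not what the hypothesis means here. In this paper $\nabla V=O(|x|)$ is a growth condition at infinity; combined with \eqref{pote2} it yields only $|\nabla V(x)|\lesssim 1+|x|$, and $\nabla V(0)$ may well be nonzero. This is visible in three places: the paper's remark that \eqref{pote2} merely makes $\nabla V$ \emph{bounded} near the origin; the claim that every $V\in C^2(\mathbb{R}^N)\cap W^{2,\infty}(\mathbb{R}^N)$ (generically with $\nabla V(0)\neq 0$) satisfies the hypotheses; and the paper's own proof of Proposition \ref{Psiesti}, in which the term $-\lambda^2 w\cdot\nabla V(0)(Q,\varphi)_2$ is retained and is precisely the source of the $\lambda^2|w|$ in the second estimate. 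Concretely, $V(x)=\sin(x_1)$ satisfies all the standing assumptions and $V(0)=0$, yet $|V(z)|\lesssim|z|^2$ fails near the origin; so your pointwise bounds on $\Psi$ and $\nabla\Psi$, and your intermediate claim $|\lambda^2V(-w)(Q,\varphi)_2|\lesssim\lambda^2|w|^2\|\varphi\|_2$, are unjustified for admissible potentials. (The same remark applies to \eqref{V2growth}, which should be used as $|\nabla^2V(x)|\lesssim 1+|x|^r$.)

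The gap is repairable without changing your architecture. From $|\nabla V(z)|\lesssim 1+|z|$ and $V(0)=0$ you get $|V(z)|\lesssim|z|+|z|^2$, so the pointwise bounds acquire a first-order term: $|\Psi(y)|\lesssim\lambda^2\bigl(\lambda|y|+|w|\bigr)Q(y)+\lambda^2\bigl(\lambda|y|+|w|\bigr)^2Q(y)$, and similarly for $\nabla\Psi$. The weighted $L^2$ estimate still closes: the linear term contributes $\lambda^3+\lambda^2|w|$, which is exactly the claimed $\lambda^2(\lambda+|w|)$, and the quadratic terms are of lower order for $\lambda,|w|\ll 1$. In the radial pairing, $|V(-w)|\lesssim|w|$ (not $|w|^2$) still gives the stated $\lambda^2|w|$, and your remainder estimate for $R$ goes through with the weight $1+(|w|+\lambda|y|)^r$ in place of $(|w|+\lambda|y|)^r$. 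With these corrections your argument becomes essentially the paper's proof.
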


\begin{proof}
By using Taylor's theorem and $V(0)=0$, we write
\begin{align*}
\lambda^2V(\lambda y-w)&=\lambda^2(\lambda y-w)\cdot\nabla V(0)+\sum_{|\alpha|=2}\int_0^1 \lambda^2(\lambda y-w)^\alpha\frac{\partial^\alpha V}{\partial x^\alpha}(\tau(\lambda y-w))(1-\tau)d\tau,\\
\lambda^3\frac{\partial V}{\partial x_j}(\lambda y-w)&=\lambda^3\frac{\partial V}{\partial x_j}(0)+\int_0^1\lambda^3(\lambda y-w)\cdot\left(\nabla\frac{\partial V}{\partial x_j}\right)(\tau(\lambda y-w))d\tau.
\end{align*}
Therefore, we have
\begin{align*}
\left|\Psi(y)\right|&\leq\lambda^2(\lambda|y|+|w|)Q(y)+\lambda^2(\lambda|y|+|w|)^{2+r}Q(y),\\
\left|\nabla\Psi(y)\right|&\leq \lambda^3Q(y)+\lambda^2(\lambda|y|+|w|)^{1+r}Q+\lambda^2(\lambda|y|+|w|)|\nabla Q(y)|+\lambda^2(\lambda|y|+|w|)^{2+r}|\nabla Q(y)|.
\end{align*}
Therefore, according to Proposition \ref{GSP} and the exponential decay of $Q$ (\cite[Theorem 8.1.1]{CSSE}), there exists a sufficiently small constant $\epsilon'>0$ such that
\[
\left\|e^{\epsilon'|y|}\Psi\right\|_2+\left\|e^{\epsilon'|y|}\nabla\Psi\right\|_2\lesssim\lambda^2(\lambda+|w|).
\]

Since $(yQ,\varphi)_2=0$ for any radial function $\varphi\in L^2(\mathbb{R}^N)$, we obtain
\[
(\Psi,\varphi)_2=-\lambda^2 w\cdot\nabla V(0)(Q,\varphi)_2+\sum_{|\alpha|=2}\int_0^1 \lambda^2\left((\lambda y-w)^\alpha\frac{\partial^\alpha V}{\partial x^\alpha}(\tau(\lambda y-w))Q,\varphi\right)_2(1-\tau)d\tau.
\]
Therefore, we obtain conclusion.
\end{proof}

At the end of this section, we state the following standard result. For the proof, see \cite{MRUPB}.

\begin{lemma}[Decomposition]
\label{decomposition}
There exists $\overline{C}>0$ such that the following statement holds. Let $I$ be an interval and $\delta>0$ be sufficiently small. We assume that $u\in C(I,H^1(\mathbb{R}^N))\cap C^1(I,H^{-1}(\mathbb{R}^N))$ satisfies \[
\forall\ t\in I,\ \left\|\lambda(t)^{\frac{N}{2}}u\left(t,\lambda(t)y-w(t)\right)e^{i\gamma(t)}-Q\right\|_{H^1}< \delta
\]
for some functions $\lambda:I\rightarrow(0,\infty)$, $\gamma:I\rightarrow\mathbb{R}$, and $w:I\rightarrow\mathbb{R}^N$. Then there exist unique functions $\tilde{\lambda}:I\rightarrow(0,\infty)$, $\tilde{b}:I\rightarrow\mathbb{R}$, $\tilde{\gamma}:I\rightarrow\mathbb{R}\slash 2\pi\mathbb{Z}$, and $\tilde{w}:I\rightarrow\mathbb{R}^N$ such that 
\begin{align}
\label{mod}
&u(t,x)=\frac{1}{\tilde{\lambda}(t)^{\frac{N}{2}}}\left(Q+\tilde{\varepsilon}\right)\left(t,\frac{x+\tilde{w}(t)}{\tilde{\lambda}(t)}\right)e^{-i\frac{\tilde{b}(t)}{4}\frac{|x+\tilde{w}(t)|^2}{\tilde{\lambda}(t)^2}+i\tilde{\gamma}(t)},\\
&\left|\frac{\tilde{\lambda}(t)}{\lambda(t)}-1\right|+\left|\tilde{b}(t)\right|+\left|\tilde{\gamma}(t)-\gamma(t)\right|_{\mathbb{R}\slash 2\pi\mathbb{Z}}+\left|\frac{\tilde{w}(t)-w(t)}{\tilde{\lambda}(t)}\right|<\overline{C}\nonumber
\end{align}
hold, where $|\cdot|_{\mathbb{R}\slash 2\pi\mathbb{Z}}$ is defined by
\[
|c|_{\mathbb{R}\slash 2\pi\mathbb{Z}}:=\inf_{m\in\mathbb{Z}}|c+2\pi m|,
\]
and that $\tilde{\varepsilon}$ satisfies the orthogonal conditions
\begin{align}
\label{orthocondi}
\left(\tilde{\varepsilon},i\Lambda Q\right)_2=\left(\tilde{\varepsilon},|y|^2Q\right)_2=\left(\tilde{\varepsilon},i\rho\right)_2=0,\quad \left(\tilde{\varepsilon},yQ\right)_2=0
\end{align}
on $I$. In particular, $\tilde{\lambda}$, $\tilde{b}$, $\tilde{\gamma}$, and $\tilde{w}$ are $C^1$ functions and independent of $\lambda$, $\gamma$, and $w$.
\end{lemma}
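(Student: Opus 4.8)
The plan is to prove this by the implicit function theorem, reading the four orthogonality relations in \eqref{orthocondi} as a system of $N+3$ scalar equations for the $N+3$ unknowns $(\tilde\lambda,\tilde b,\tilde\gamma,\tilde w)$. Fixing $t$ and inverting \eqref{mod}, I regard $\tilde\varepsilon$ as a function of the parameters through $\tilde\varepsilon(y)=\tilde\lambda^{N/2}u(t,\tilde\lambda y-\tilde w)e^{i\tilde b|y|^2/4-i\tilde\gamma}-Q(y)$, and define $\Phi=(\Phi_1,\dots,\Phi_4)$ by pairing $\tilde\varepsilon$ in $(\cdot,\cdot)_2$ against $i\Lambda Q$, $|y|^2Q$, $i\rho$, and $yQ$. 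These pairings are finite and smooth in the parameters because $\Lambda Q,|y|^2Q,\rho,yQ$ decay like $Q$ by Proposition \ref{GSP}. The hypothesis places $\lambda^{N/2}u(t,\lambda y-w)e^{i\gamma}$ within $\delta$ of $Q$ in $H^1$, so that up to the symmetries generated below $u(t)$ lies $O(\delta)$-close to the $Q$-orbit; this furnishes a base point at which $\Phi$ is $O(\delta)$-small and around which I apply the implicit function theorem.

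First I would compute the differential of $\Phi$ in the parameters. Differentiating $\tilde\varepsilon$ and evaluating at the idealized configuration $\tilde\varepsilon=0$ (where the renormalized $u$ is exactly $Q$) produces, up to positive scalar factors, the infinitesimal generators $\partial_{\tilde\lambda}\tilde\varepsilon\mapsto\Lambda Q$, $\partial_{\tilde b}\tilde\varepsilon\mapsto\frac{i}{4}|y|^2Q$, $\partial_{\tilde\gamma}\tilde\varepsilon\mapsto -iQ$, and $\partial_{\tilde w}\tilde\varepsilon\mapsto -\nabla Q$. The Jacobian of $\Phi$ at this configuration is then the $(N+3)\times(N+3)$ matrix of inner products $(\cdot,\cdot)_2$ of these generators with the four test directions.

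The crux is to show this matrix is invertible, and I expect the nondegeneracy computation to be the main obstacle, since it is where the precise choice of orthogonality directions and the ground-state identities are essential. I would exploit two structural features. First, parity: $Q$ is radial, so $\Lambda Q$, $|y|^2Q$, $iQ$, $i|y|^2Q$, $i\Lambda Q$, $i\rho$ are even while $\nabla Q$ and $yQ$ are odd; since $(\cdot,\cdot)_2$ annihilates pairs of opposite parity, the translation variables decouple into an $N\times N$ block $(-\partial_jQ,y_kQ)_2=\tfrac12\delta_{jk}\|Q\|_2^2$, which is invertible. Second, for the remaining $3\times3$ block in $(\tilde\lambda,\tilde b,\tilde\gamma)$ against $(i\Lambda Q,|y|^2Q,i\rho)$, I use that $(\cdot,\cdot)_2$ vanishes between a real and a purely imaginary function, which kills most entries and leaves a matrix whose determinant equals, up to sign and a positive power of $\||y|Q\|_2$, the product $(\Lambda Q,|y|^2Q)_2\cdot(Q,\rho)_2$. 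Here $(\Lambda Q,|y|^2Q)_2=-\||y|Q\|_2^2$ by integration by parts, while $(Q,\rho)_2=-\tfrac12(L_+\Lambda Q,\rho)_2=-\tfrac12(\Lambda Q,L_+\rho)_2=-\tfrac12(\Lambda Q,|y|^2Q)_2=\tfrac12\||y|Q\|_2^2$ using $L_+\Lambda Q=-2Q$ and $L_+\rho=|y|^2Q$. Both are nonzero, so the block, and hence the full Jacobian, is nondegenerate.

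Finally I would invoke the quantitative implicit function theorem. Since the Jacobian is invertible at $\tilde\varepsilon=0$ and depends continuously on $u$, it remains invertible for $\|\tilde\varepsilon\|_{H^1}\lesssim\delta$; hence for $\delta$ small there is a unique solution $(\tilde\lambda,\tilde b,\tilde\gamma,\tilde w)$ in a fixed neighborhood of the base point, which yields the bound with the universal constant $\overline C$ and the orthogonality conditions \eqref{orthocondi}. The $C^1$ regularity in $t$ follows because $\Phi$ depends on $t$ only through $u\in C^1(I,H^{-1})$ paired against the Schwartz functions $\Lambda Q,|y|^2Q,\rho,yQ$, so the implicit map $t\mapsto(\tilde\lambda,\tilde b,\tilde\gamma,\tilde w)$ is $C^1$. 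The parameters are independent of the auxiliary $\lambda,\gamma,w$ because they are characterized intrinsically by the orthogonality of $\tilde\varepsilon$, which refers only to $u$ and the fixed profiles $Q$ and $\rho$.
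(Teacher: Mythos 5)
Your proof is correct and is essentially the argument the paper relies on: the paper does not prove Lemma \ref{decomposition} itself but defers to \cite{MRUPB}, where the decomposition is obtained exactly by your route — the implicit function theorem applied to the orthogonality map, with invertibility of the Jacobian following from the parity decoupling of the translation block $(-\partial_jQ,y_kQ)_2=\tfrac{1}{2}\delta_{jk}\|Q\|_2^2$ and the ground-state identities $(\Lambda Q,|y|^2Q)_2=-\||y|Q\|_2^2$ and $(Q,\rho)_2=\tfrac{1}{2}\||y|Q\|_2^2$. Your nondegeneracy computation via $L_+\Lambda Q=-2Q$, $L_+\rho=|y|^2Q$, and the self-adjointness of $L_+$ is the standard one and is carried out correctly, as are the smoothness-in-parameters claim (which holds because derivatives fall on the exponentially decaying test profiles after a change of variables) and the $C^1$-in-$t$ claim via $u\in C^1(I,H^{-1})$.
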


\section{Uniformity estimates for modulation terms}
\label{sec:uniesti}
From this section to Section \ref{sec:convesti}, we prepare lemmas for the proof of Theorem \ref{theorem:EMBS}.

Given $t_1<0$ which is sufficiently close to $0$, we define $s_1:=-{t_1}^{-1}$ and $\lambda_1=b_1={s_1}^{-1}$. Let $u(t)$ be the solution for \eqref{NLS} with an initial value
\begin{align}
\label{initial}
u(t_1,x):=\frac{1}{{\lambda_1}^\frac{N}{2}}Q\left(\frac{x}{\lambda_1}\right)e^{-i\frac{b_1}{4}\frac{|x|^2}{{\lambda_1}^2}}.
\end{align}
Note that $u\in C((T_*,T^*),\Sigma^2(\mathbb{R}^N))$ and $|x|\nabla u\in C((T_*,T^*),L^2(\mathbb{R}^N))$. Moreover,
\[
\im\int_{\mathbb{R}^N}u(t_1,x)\nabla\overline{u}(t_1,x)dx=0
\]
holds.

Since $u$ satisfies the assumption in Lemma \ref{decomposition} in a neighbourhood of $t_1$, there exist decomposition parameters $\tilde{\lambda}_{t_1}$, $\tilde{b}_{t_1}$, $\tilde{\gamma}_{t_1}$, $\tilde{w}_{t_1}$, and $\tilde{\varepsilon}_{t_1}$ such that \eqref{mod} and \eqref{orthocondi} hold in the neighbourhood. We define the rescaled time $s_{t_1}$ by
\[
s_{t_1}(t):=s_1-\int_t^{t_1}\frac{1}{\tilde{\lambda}_{t_1}(\tau)^2}d\tau.
\]
Moreover, we define
\begin{align*}
&t_{t_1}:={s_{t_1}}^{-1}, \quad\lambda_{t_1}(s):=\tilde{\lambda}_{t_1}(t_{t_1}(s)),\quad b_{t_1}(s):=\tilde{b}_{t_1}(t_{t_1}(s)),\\
&\gamma_{t_1}(s):=\tilde{\gamma}_{t_1}(t_{t_1}(s)),\quad w_{t_1}(s):=\tilde{w}_{t_1}(t_{t_1}(s)),\quad \varepsilon_{t_1}(s,y):=\tilde{\varepsilon}_{t_1}(t_{t_1}(s),y).
\end{align*}
For the sake of clarity in notation, we often omit the subscript $t_1$. Furthermore, let $I_{t_1}$ be the maximal interval of the existence of the decomposition such that \eqref{mod} and \eqref{orthocondi} hold and we define 
\[
J_{s_1}:=s_{t_1}\left(I_{t_1}\right).
\]
Additionally, let $s_0\ (\leq s_1)$ be sufficiently large and
\[
s':=\max\left\{s_0,\inf J_{s_1}\right\}.
\]

Let $K$ be sufficiently large and $L$ and $M$ satisfy
\[
L=\frac{3}{2}+\frac{1}{K},\quad 1<M<2(L-1).
\]
Moreover, we define $s_*$ by
\[
s_*:=\inf\left\{\sigma\in(s',s_1]\ \middle|\ \mbox{\eqref{bootstrap} holds on }[\sigma,s_1]\right\},
\]
where
\begin{align}
\label{bootstrap}
\left\{\begin{array}{l}
\left\|\varepsilon(s)\right\|_{H^1}^2+b(s)^2\||y|\varepsilon(s)\|_2^2<s^{-2L},\\
\left|s\lambda(s)-1\right|<s^{-M},\quad \left|sb(s)-1\right|<s^{-M},\quad |w(s)|<s^{-\frac{3}{2}}.
\end{array}\right.
\end{align}
Note that for all $s\in(s_*,s_1]$, we have
\[
s^{-1}(1-s^{-M})<\lambda(s),b(s)<s^{-1}(1+s^{-M}).
\]

Finally, we define
\[
\Mod(s):=\left(\frac{1}{\lambda}\frac{\partial \lambda}{\partial s}+b,\frac{\partial b}{\partial s}+b^2,1-\frac{\partial \gamma}{\partial s},\frac{\partial w}{\partial s}\right).
\]
The goal of this section is to estimate of $\Mod(s)$.

In the following, positive constants $C$ and $\epsilon$ are sufficiently large and small, respectively. If necessary, we retake $s_0$ and $s_1$ sufficiently large in  response to $\epsilon$.

\begin{lemma}[The equation for $\varepsilon$]
\label{theorem:epsieq}
On $J_{s_1}$, 
\begin{align}
\label{epsieq}
\Psi&=i\frac{\partial \varepsilon}{\partial s}+\Delta \varepsilon-\varepsilon+g(\lambda y-w)f\left(Q+\varepsilon\right)-f\left(Q\right)-\lambda^2 V(\lambda y-w)\varepsilon\\
&\hspace{50pt}-i\left(\frac{1}{\lambda}\frac{\partial \lambda}{\partial s}+b\right)\Lambda (Q+\varepsilon)+\left(1-\frac{\partial \gamma}{\partial s}\right)(Q+\varepsilon)+\left(\frac{\partial b}{\partial s}+b^2\right)\frac{|y|^2}{4}(Q+\varepsilon)\nonumber\\
&\hspace{70pt}-\left(\frac{1}{\lambda}\frac{\partial \lambda}{\partial s}+b\right)b\frac{|y|^2}{2}(Q+\varepsilon)+i\frac{1}{\lambda}\frac{\partial w}{\partial s}\cdot\nabla(Q+\varepsilon)+\frac{1}{2}\frac{b}{\lambda}\frac{\partial w}{\partial s}\cdot y(Q+\varepsilon).\nonumber
\end{align}
\end{lemma}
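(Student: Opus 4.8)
The plan is to substitute the change of variables from Step~1 of the outline directly into \eqref{NLS} and collect terms. Write $\phi(s,y):=-\frac{b(s)|y|^2}{4}+\gamma(s)$, so that the ansatz reads $u(t,x)=U(s(t),y(t,x))$ with $U(s,y):=\lambda^{-N/2}v(s,y)e^{i\phi}$, $v=Q+\varepsilon$, $y=(x+w)/\lambda$, and $x=\lambda y-w$. The first task is to express each of the four terms $i\partial_t u$, $\Delta u$, $g(x)f(u)$, and $-V(x)u$ in the variables $(s,y)$, multiply the whole equation by $\lambda^{N/2+2}e^{-i\phi}$, and then sort the output into the elliptic/nonlinear part, the potential part, and the modulation terms.

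The spatial and algebraic terms are straightforward. Since $\nabla_x=\lambda^{-1}\nabla_y$ we have $\Delta_x u=\lambda^{-2}\Delta_y U$, and a direct computation of $\Delta_y(ve^{i\phi})$ using $\nabla_y\phi=-\frac{b}{2}y$, $\Delta_y\phi=-\frac{bN}{2}$, $|\nabla_y\phi|^2=\frac{b^2|y|^2}{4}$ produces, besides $\Delta_y v$, the extra terms $-iby\cdot\nabla_y v-i\frac{bN}{2}v-\frac{b^2|y|^2}{4}v$. The nonlinearity and potential scale cleanly because $|u|^{4/N}=\lambda^{-2}|v|^{4/N}$, yielding $g(\lambda y-w)f(v)$ and $-\lambda^2V(\lambda y-w)v$ after rescaling.

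The delicate term is $i\partial_t u$. Because $y$ depends on $t$ through both $\lambda(s)$ and $w(s)$ while $x$ is held fixed, the chain rule gives $\partial_t u|_x=\frac{ds}{dt}\bigl[\partial_s U+(\partial_s y|_x)\cdot\nabla_y U\bigr]$, where $\partial_s y|_x=\lambda^{-1}\partial_s w-\lambda^{-1}y\,\partial_s\lambda$ (using $x+w=\lambda y$) and $\frac{ds}{dt}=\lambda^{-2}$. After multiplying by $\lambda^{N/2+2}e^{-i\phi}$, expanding $\partial_s U$ produces $i\partial_s v$, the scaling term $-i\frac{N}{2}\frac{\partial_s\lambda}{\lambda}v$, and the phase term $-\partial_s\phi\,v=-\partial_s\gamma\,v+\frac{\partial_s b}{4}|y|^2v$; the transport piece $i(\partial_s y|_x)\cdot\nabla_y U$ expands, via $\lambda^{N/2}e^{-i\phi}\nabla_yU=\nabla_y v-\frac{ib}{2}yv$, into $i\frac{1}{\lambda}\partial_s w\cdot\nabla_y v+\frac{b}{2\lambda}\partial_s w\cdot y\,v-i\frac{\partial_s\lambda}{\lambda}y\cdot\nabla_y v-\frac{b}{2}\frac{\partial_s\lambda}{\lambda}|y|^2v$. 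Substituting $v=Q+\varepsilon$ and invoking the ground-state equation $\Delta Q-Q+f(Q)=0$ turns $\Delta_y v$ into $\Delta_y\varepsilon+Q-f(Q)$; this single substitution simultaneously creates the $-\varepsilon$ and $-f(Q)$ terms and supplies the constant $+1$ that upgrades $-\partial_s\gamma$ into the coefficient $1-\partial_s\gamma$. Splitting $-\lambda^2V(\lambda y-w)(Q+\varepsilon)=-\Psi-\lambda^2V(\lambda y-w)\varepsilon$ and moving $-\Psi$ to the left produces the $\Psi$ on the left-hand side of \eqref{epsieq}.

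The main bookkeeping obstacle is the reorganisation of the surviving modulation terms. The imaginary contributions $-i\frac{\partial_s\lambda}{\lambda}\bigl(\frac{N}{2}v+y\cdot\nabla_y v\bigr)$ and $-ib\bigl(\frac{N}{2}v+y\cdot\nabla_y v\bigr)$ (the latter from the Laplacian) must be recognised together as $-i\bigl(\frac{1}{\lambda}\partial_s\lambda+b\bigr)\Lambda v$ with $\Lambda=\frac{N}{2}+y\cdot\nabla$. The quadratic-phase terms are the subtlest: the three separate pieces $\frac{\partial_s b}{4}|y|^2v$, $-\frac{b^2}{4}|y|^2v$, and $-\frac{b}{2}\frac{\partial_s\lambda}{\lambda}|y|^2v$ do not match \eqref{epsieq} individually, and one verifies that their sum equals $\bigl(\partial_s b+b^2\bigr)\frac{|y|^2}{4}v-\bigl(\frac{1}{\lambda}\partial_s\lambda+b\bigr)b\frac{|y|^2}{2}v$, the required split being reconciled by the cancellation $\frac{b^2}{4}-\frac{b^2}{2}=-\frac{b^2}{4}$. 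The transport pieces $i\frac{1}{\lambda}\partial_s w\cdot\nabla_y v$ and $\frac{b}{2\lambda}\partial_s w\cdot y\,v$ then match the final two modulation terms directly, and collecting everything yields \eqref{epsieq}. No genuine difficulty beyond this careful accounting arises, since $Q$ is $s$-independent and all scaling factors were arranged to cancel by the choice of multiplier $\lambda^{N/2+2}e^{-i\phi}$.
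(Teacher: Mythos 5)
Your proposal is correct and is precisely the ``direct calculation'' that the paper's proof invokes without writing out: substituting the ansatz, multiplying by $\lambda^{N/2+2}e^{-i\phi}$, using the ground-state equation $\Delta Q-Q+f(Q)=0$, and splitting $-\lambda^2V(\lambda y-w)(Q+\varepsilon)=-\Psi-\lambda^2V(\lambda y-w)\varepsilon$. All the delicate sign and coefficient checks you flag (the $\Lambda$ regrouping, the quadratic-phase identity $\frac{\partial_s b}{4}-\frac{b^2}{4}-\frac{b}{2}\frac{\partial_s\lambda}{\lambda}=\frac{\partial_s b+b^2}{4}-\bigl(\frac{\partial_s\lambda}{\lambda}+b\bigr)\frac{b}{2}$, and the $+Q=-\varepsilon+(Q+\varepsilon)$ rewriting that produces $1-\partial_s\gamma$) verify correctly.
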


\begin{proof}
This result is proven via direct calculation.
\end{proof}

\begin{lemma}
\label{theorem:gTaylor}
For $g$,
\begin{align*}
g(\lambda y-w)&=1+\frac{1}{6}\sum_{|\alpha|=3}(\lambda y-w)^\alpha\frac{\partial^\alpha g}{\partial x^\alpha}(0)+\frac{1}{6}\sum_{|\alpha|=4}\int_0^1 (\lambda y-w)^\alpha\frac{\partial^\alpha g}{\partial x^\alpha}(\tau(\lambda y-w))(1-\tau)^3d\tau\\
\frac{\partial g}{\partial x_j}(\lambda y-w)&=\frac{1}{2}\sum_{|\alpha|=2}(\lambda y-w)\frac{\partial^\alpha g}{\partial x^\alpha}(0)+\frac{1}{2}\sum_{|\alpha|=3}\int_0^1 (\lambda y-w)^\alpha\frac{\partial^\alpha\partial g}{\partial x^\alpha\partial x_j}(\tau(\lambda y-w))(1-\tau)^2d\tau
\end{align*}
hold.
\end{lemma}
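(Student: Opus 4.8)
The plan is to prove Lemma \ref{theorem:gTaylor} by a direct application of Taylor's theorem with integral remainder, exactly mirroring the computation already carried out in the proof of Proposition \ref{Psiesti} for the potential $V$. The key input is the flatness condition \eqref{gflat}, namely $g(0)=1$ and $\frac{\partial g}{\partial x_j}(0)=\frac{\partial^2 g}{\partial x_j\partial x_k}(0)=0$, together with the regularity $g\in C^{3,1}_{\mathrm{loc}}$ from \eqref{g2}, which is precisely the smoothness needed to write a third-order Taylor expansion with a Lipschitz (hence a.e.\ differentiable, integrable) fourth-order remainder.

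First I would expand $g$ around the origin to third order. Taylor's theorem with integral remainder gives
\[
g(\xi)=g(0)+\nabla g(0)\cdot\xi+\frac{1}{2}\sum_{|\alpha|=2}\xi^\alpha\frac{\partial^\alpha g}{\partial x^\alpha}(0)+\frac{1}{6}\sum_{|\alpha|=3}\xi^\alpha\frac{\partial^\alpha g}{\partial x^\alpha}(0)+\frac{1}{6}\sum_{|\alpha|=4}\int_0^1\xi^\alpha\frac{\partial^\alpha g}{\partial x^\alpha}(\tau\xi)(1-\tau)^3\,d\tau
\]
for $\xi\in\mathbb{R}^N$. Substituting $\xi=\lambda y-w$ and invoking \eqref{gflat} kills the constant-adjusted term ($g(0)=1$), the first-order term ($\nabla g(0)=0$), and the second-order term ($\nabla^2 g(0)=0$), leaving exactly the first displayed identity. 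The only point requiring a word of justification is that the fourth-order integral remainder is legitimate when $g$ is merely $C^{3,1}_{\mathrm{loc}}$ rather than $C^4$: since the third derivatives are locally Lipschitz, they are differentiable almost everywhere with locally bounded gradients, so the remainder integral is well defined, which is the same regularity bookkeeping used implicitly in Proposition \ref{Psiesti}.

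Second, for the gradient I would expand $\frac{\partial g}{\partial x_j}$ around the origin to second order. Applying Taylor with integral remainder to each first-order partial derivative gives
\[
\frac{\partial g}{\partial x_j}(\xi)=\frac{\partial g}{\partial x_j}(0)+\sum_{k}\xi_k\frac{\partial^2 g}{\partial x_k\partial x_j}(0)+\frac{1}{2}\sum_{|\alpha|=2}\int_0^1\xi^\alpha\frac{\partial^\alpha}{\partial x^\alpha}\!\left(\frac{\partial g}{\partial x_j}\right)\!(\tau\xi)(1-\tau)^2\,d\tau,
\]
and the flatness condition \eqref{gflat} annihilates both the zeroth- and first-order terms, yielding the second displayed identity after substituting $\xi=\lambda y-w$ and rewriting the second-order derivatives as third-order derivatives of $g$.

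There is no genuine obstacle here; this is a routine bookkeeping lemma whose sole purpose is to record the Taylor expansions of $g$ and $\nabla g$ in the exact form that will be fed into the error estimates for $\varepsilon$ in the subsequent sections. The only mild subtlety worth flagging is the consistent use of the reduced regularity $C^{3,1}_{\mathrm{loc}}$ in place of $C^4$, and the harmless typographical matter that the stated right-hand side of the first-order-derivative expansion writes $(\lambda y - w)$ where $(\lambda y - w)^\alpha$ with $|\alpha|=2$ is meant; I would present the identities with multi-index powers written explicitly so that the contraction of the integral remainders against $Q$ and $\nabla Q$ in later estimates is transparent.
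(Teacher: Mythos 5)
Your proposal is correct and takes essentially the same route as the paper, whose entire proof is the sentence ``This result is proven via direct calculation by Taylor's theorem'': you apply Taylor's theorem with integral remainder to $g$ and to $\frac{\partial g}{\partial x_j}$ and use the flatness condition \eqref{gflat} to annihilate the lower-order terms. Your additional remarks---that the $C^{3,1}_{\mathrm{loc}}$ regularity from \eqref{g2} suffices for the fourth-order integral remainder (Lipschitz third derivatives are absolutely continuous along segments), and that $(\lambda y-w)$ in the second identity should read $(\lambda y-w)^\alpha$---are accurate but go beyond what the paper records.
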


\begin{proof}
This result is proven via direct calculation by Taylor's theorem.
\end{proof}

\begin{lemma}
\label{imepsiesti}
For all $s\in(s_*,s_1]$,
\begin{align}
\label{nablaortho}
\left|(\im\varepsilon(s),\nabla Q)_2\right|\lesssim s^{-2}.
\end{align}
\end{lemma}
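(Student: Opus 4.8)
The plan is to connect the quantity $(\im\varepsilon,\nabla Q)_2$ to the physical momentum $P(u):=\im\int_{\mathbb{R}^N}\overline{u}\,\nabla u\,dx$ of the solution, and then to exploit the fact that $P(u)$ vanishes at the initial time $t_1$. First I would insert the decomposition \eqref{mod} into $P(u)$. Writing $v=Q+\varepsilon$ and $y=(x+w)/\lambda$, a direct computation (differentiating the quadratic phase, whose $x$-gradient is $-\tfrac{b}{2\lambda}y$) gives
\[
\lambda P(u)=\int_{\mathbb{R}^N}\im\left(\overline{v}\,\nabla v\right)dy-\frac{b}{2}\int_{\mathbb{R}^N}y\,|v|^2\,dy.
\]
Expanding $v=Q+\varepsilon$, using that $Q$ is real and radial, integrating by parts in the cross terms, and invoking $\int yQ^2\,dy=0$ together with the orthogonality $(\varepsilon,yQ)_2=0$ from \eqref{orthocondi}, the leading contributions collapse to
\[
(\im\varepsilon,\nabla Q)_2=-\frac{\lambda}{2}P(u)+\frac12\int_{\mathbb{R}^N}\im\left(\overline{\varepsilon}\,\nabla\varepsilon\right)dy-\frac{b}{4}\int_{\mathbb{R}^N}y\,|\varepsilon|^2\,dy,
\]
so it remains to estimate the three terms on the right-hand side.

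The two quadratic terms are controlled directly by the bootstrap \eqref{bootstrap}. Cauchy--Schwarz gives $\left|\int\im(\overline{\varepsilon}\,\nabla\varepsilon)\right|\le\|\varepsilon\|_{H^1}^2\lesssim s^{-2L}$, and $\frac{b}{4}\left|\int y|\varepsilon|^2\right|\lesssim b\,\||y|\varepsilon\|_2\|\varepsilon\|_2\lesssim s^{-L}\cdot s^{-L}=s^{-2L}$, where I used $b\||y|\varepsilon\|_2<s^{-L}$ and $\|\varepsilon\|_2<s^{-L}$. Since $L>1$ we have $s^{-2L}\lesssim s^{-2}$, so both terms are admissible, and the problem reduces to showing $\frac{\lambda}{2}|P(u)|\lesssim s^{-2}$.

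For the momentum term I would use the evolution law for \eqref{NLS}. A direct computation from the equation, legitimate here because the constructed solution satisfies $u\in\Sigma^2$ and $|x|\nabla u\in L^2$ so that all integrations by parts converge, yields
\[
\frac{d}{dt}P(u)=-\int_{\mathbb{R}^N}\nabla V\,|u|^2\,dx+c_N\int_{\mathbb{R}^N}\nabla g\,|u|^{2+\frac4N}\,dx
\]
for a dimensional constant $c_N$. Passing to the $y$ variable ($|u|^2\,dx=|v|^2\,dy$ with $x=\lambda y-w$), the growth bound $\nabla V=O(|x|)$ from \eqref{V1growth}, the local boundedness of $\nabla V$, and the exponential decay of $Q$ together with the bootstrap control on $\varepsilon$ show that the $V$-integral is $O(1)$; since $\nabla g(0)=\nabla^2 g(0)=0$ by \eqref{gflat}, the $g$-integral is even smaller, of size $O(\lambda^2+|w|^2)$. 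Hence $\left|\frac{d}{dt}P(u)\right|\lesssim 1$. Because the initial datum \eqref{initial} satisfies $\im\int u(t_1)\nabla\overline{u}(t_1)\,dx=0$, i.e. $P(u)(t_1)=0$, integrating from $t_1$ down to $t=t_{t_1}(s)$ gives $|P(u)(t)|\le\int_t^{t_1}\left|\frac{d}{dt'}P(u)\right|dt'\lesssim t_1-t$. Converting to the rescaled time by $dt'=\lambda^2\,ds'$ and using $\lambda(s')\lesssim s'^{-1}$ from \eqref{bootstrap}, I obtain $t_1-t=\int_s^{s_1}\lambda(s')^2\,ds'\lesssim\int_s^{s_1}s'^{-2}\,ds'\le s^{-1}$. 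Multiplying by $\lambda(s)\lesssim s^{-1}$ gives $\frac{\lambda}{2}|P(u)|\lesssim s^{-2}$, which together with the quadratic estimates proves \eqref{nablaortho}.

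The conceptual heart of the argument, and its main subtlety, is that $P(u)$ itself is only of size $s^{-1}$ rather than $s^{-2}$: the extra decay needed is supplied entirely by the rescaling factor $\lambda\approx s^{-1}$. The only genuinely technical point is the derivation and rigorous justification of the momentum identity, and this is precisely where the $\Sigma^2$ regularity (with $|x|\nabla u\in L^2$) built into the construction is used.
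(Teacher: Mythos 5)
Your proposal is correct and is essentially the paper's own proof: both arguments rescale the momentum to get the identity $\lambda P(u)=-2(\im\varepsilon,\nabla Q)_2+\int_{\mathbb{R}^N}\im\left(\overline{\varepsilon}\,\nabla\varepsilon\right)dy-\tfrac{b}{2}\int_{\mathbb{R}^N}y|\varepsilon|^2\,dy$ (using that $Q$ is real and radial and the orthogonality $(\varepsilon,yQ)_2=0$ from \eqref{orthocondi}), control the two quadratic terms by $O(s^{-2L})$ via the bootstrap \eqref{bootstrap}, and bound $P(u)$ itself by $O(s^{-1})$ by integrating the momentum law from $t_1$, where the initial data \eqref{initial} makes it vanish, exactly as in the paper. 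The only slip is intermediate and harmless: since $|u|^{2+\frac{4}{N}}\,dx=\lambda^{-2}|v|^{2+\frac{4}{N}}\,dy$, the $g$-integral $\int_{\mathbb{R}^N}\nabla g\,|u|^{2+\frac{4}{N}}dx$ carries a factor $\lambda^{-2}$ and is therefore $O\left(1+|w|^2/\lambda^2+\lambda^{-2}\|\varepsilon\|_{H^1}^2\right)=O(1)$ rather than $O(\lambda^2+|w|^2)$, but your argument only uses $\left|\tfrac{d}{dt}P(u)\right|\lesssim 1$, which remains true.
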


\begin{proof}
According to a direct calculation, we have
\[
\frac{d}{dt}\im\int_{\mathbb{R}^N}u(t,x)\nabla\overline{u}(t,x)dx=2\left(g|u|^\frac{4}{N}u-Vu(t),\nabla u(t)\right)_2=\left(-\frac{1}{1+\frac{2}{N}}\nabla g|u|^\frac{4}{N}u+\nabla Vu(t),u(t)\right)_2.
\]
Firstly, according to \eqref{V1growth}, we obtain
\begin{align*}
\left|\left(\nabla Vu(t),u(t)\right)_2\right|&=\left|\left((\nabla V)(\tilde{\lambda}(t)y-\tilde{w}(t))(Q+\tilde{\varepsilon}(t)),Q+\tilde{\varepsilon}(t)\right)_2\right|\\
&\lesssim \|Q+\varepsilon\|_2^2+\|(\tilde{\lambda}(t)y-w)(Q+\varepsilon)\|_2\|Q+\varepsilon\|_2\\
&\lesssim 1.
\end{align*}
Secondly, we obtain
\begin{align*}
&\frac{1}{2+\frac{4}{N}}\left(\nabla g|u|^\frac{4}{N}u+\nabla Vu(t),u(t)\right)_2=\frac{1}{\lambda^2}\int_{\mathbb{R}^N}(\nabla g)(\lambda y-w)F(Q+\varepsilon)dy\\
=&\frac{1}{\lambda^2}\int_{\mathbb{R}^N}(\nabla g)(\lambda y-w)\left(F(Q+\varepsilon)-F(Q)-dF(Q)(\varepsilon)\right)dy+\frac{1}{\lambda^2}\int_{\mathbb{R}^N}(\nabla g)(\lambda y-w)\left(F(Q)+dF(Q)(\varepsilon)\right)dy.
\end{align*}
Therefore, since $(\nabla g)(\lambda y-w)Q=O(\lambda^2+|w|^2)$, we obtain
\[
\left|\left(\nabla g|u|^\frac{4}{N}u+\nabla Vu(t),u(t)\right)_2\right|\lesssim \frac{1}{\lambda^2}\|\varepsilon\|_{H^1}^2+1+\frac{|w|^2}{\lambda^2}\lesssim 1
\]

Accordingly, we obtain
\begin{align*}
\left|\im\int_{\mathbb{R}^N}u(t(s),x)\nabla\overline{u}(t(s),x)dx\right|&\lesssim \int_s^{s_1}\lambda(\sigma)^2\left|\left(\nabla Vu(t(\sigma)),u(t(\sigma))\right)_2\right|d\sigma\\
&\lesssim\int_s^{s_1}\sigma^{-2}d\sigma\lesssim s^{-1}.
\end{align*}
Therefore, we obtain
\begin{align*}
&2(\im\varepsilon(s),\nabla Q)_2+(\varepsilon(s),i\nabla\varepsilon(s))_2+\frac{b}{2}\int_{\mathbb{R}^N}y\left|Q(y)+\varepsilon(s,y)\right|^2dy\\
=&\ \lambda\im\int_{\mathbb{R}^N}u(t(s),x)\nabla\overline{u}(t(s),x)dx\\
=&\ O\left(s^{-2}\right)
\end{align*}
Moreover, from \eqref{bootstrap} and the orthogonal conditions \eqref{orthocondi}, we obtain
\begin{align*}
2(\varepsilon(s),i\nabla\varepsilon(s))_2+b\int_{\mathbb{R}^N}y\left|Q(y)+\varepsilon(s,y)\right|^2dy&=2(\varepsilon(s),i\nabla\varepsilon(s))_2+b\int_{\mathbb{R}^N}y|\varepsilon(s,y)|^2dy\\
&=O(s^{-2L}).
\end{align*}
Consequently, we obtain \eqref{nablaortho}.
\end{proof}

\begin{lemma}[Estimation of modulation terms]
\label{modtermesti}
For all $s\in(s_*,s_1]$,
\begin{align}
\label{ortho}
2(\varepsilon(s),Q)_2&=-\left\|\varepsilon(s)\right\|_2^2,\\
\label{modesti}
\left|\Mod(s)\right|&\lesssim s^{-3},\\
\label{defbesti}
\left|\frac{\partial b}{\partial s}+b^2\right|&\lesssim s^{-2L}.
\end{align}
\end{lemma}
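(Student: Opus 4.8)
My plan is to handle the three assertions in turn: \eqref{ortho} is a restatement of mass conservation, \eqref{modesti} comes from differentiating the orthogonality conditions \eqref{orthocondi} in $s$ and solving the resulting linear system for $\Mod(s)$, and \eqref{defbesti} is a refinement of one component of that system obtained from a parity gain. For \eqref{ortho}, I would observe that the rescaling in Step 1 preserves the $L^2$-norm, so that $\|Q+\varepsilon(s)\|_2=\|u(t)\|_2=\|u(t_1)\|_2=\|Q\|_2$ by conservation of mass and the form of the initial datum \eqref{initial}. Expanding $\|Q+\varepsilon\|_2^2=\|Q\|_2^2+2(\varepsilon,Q)_2+\|\varepsilon\|_2^2$ and comparing with $\|Q\|_2^2$ yields \eqref{ortho} immediately; in particular $(\varepsilon,Q)_2=O(\|\varepsilon\|_2^2)$ is quadratically small.

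For \eqref{modesti}, I would differentiate each of the four relations in \eqref{orthocondi} with respect to $s$. As the test functions $i\Lambda Q$, $|y|^2Q$, $i\rho$, $yQ$ are independent of $s$, this gives $(\partial_s\varepsilon,Z)_2=0$ for each, and I substitute the expression for $i\partial_s\varepsilon$ read off from \eqref{epsieq}. Splitting $\varepsilon=\re\varepsilon+i\im\varepsilon$ and using the self-adjointness of $L_\pm$ together with the identities $L_-Q=0$, $L_+\Lambda Q=-2Q$, $L_-|y|^2Q=-4\Lambda Q$, $L_+\rho=|y|^2Q$, $L_-yQ=-\nabla Q$, the purely linear-in-$\varepsilon$ contributions either cancel against the orthogonality conditions or reduce to the quadratically small quantity $(\varepsilon,Q)_2$. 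Collecting the coefficients of the four entries of $\Mod(s)$ then produces a linear system $A\,\Mod(s)=R$ whose leading part is a fixed invertible matrix (computed from the same algebraic identities), the corrections to $A$ being $O(\|\varepsilon\|_{H^1})=o(1)$ and absorbable into the left-hand side. The right-hand side $R$ collects: the error term $\Psi$, controlled by Proposition \ref{Psiesti}; the inhomogeneous nonlinearity $[g(\lambda y-w)-1]f(Q+\varepsilon)$, which by Lemma \ref{theorem:gTaylor} and the flatness \eqref{gflat} is cubic in $(\lambda y-w)$ and hence $O(\lambda^3+|w|^3)=O(s^{-3})$ after integration against the exponential weights (Proposition \ref{GSP}); the quadratic remainder of the nonlinearity, bounded by $\|\varepsilon\|_{H^1}^2\lesssim s^{-2L}$; and the potential term $\lambda^2V(\lambda y-w)\varepsilon$, which is $o(s^{-3})$ by \eqref{V1growth} and \eqref{bootstrap}. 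Inverting $A$ gives $|\Mod(s)|\lesssim s^{-3}$.

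For \eqref{defbesti}, I would extract the scalar equation governing $\frac{\partial b}{\partial s}+b^2$ by differentiating the condition $(\varepsilon,i\Lambda Q)_2=0$ and pairing the real part of \eqref{epsieq} with $\Lambda Q$. Three features make this component smaller than the generic $s^{-3}$. First, the linear term reduces via $L_+\Lambda Q=-2Q$ to $(\re\varepsilon,-2Q)_2=-2(\varepsilon,Q)_2=\|\varepsilon\|_2^2$, which is quadratically small by \eqref{ortho}. Second, $(Q,\Lambda Q)_2=0$, so the phase modulation drops out, and the coefficient of $\frac{\partial b}{\partial s}+b^2$ is the nonzero constant $-\tfrac14\||y|Q\|_2^2$. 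Third, $\Lambda Q$ is radial, so the leading cubic term of $g(\lambda y-w)-1$ (odd in $y$ when $w=0$) integrates to zero, leaving $O(\lambda^2|w|+|w|^3+\lambda^4)\lesssim s^{-7/2}$, and the odd modulation terms carrying $\nabla(Q+\varepsilon)$ and $y(Q+\varepsilon)$ likewise vanish at leading order; the remaining contribution of $\Psi$ is $\lesssim\lambda^2|w|+\lambda^4\lesssim s^{-7/2}$ by the second estimate of Proposition \ref{Psiesti}. Since $K$ is large, $s^{-7/2}\le s^{-2L}$, while the cross terms $|\Mod|\,\|\varepsilon\|_{H^1}\lesssim s^{-3-L}$ and the quadratic remainder $\|\varepsilon\|_{H^1}^2$ are all $\lesssim s^{-2L}$; this gives \eqref{defbesti}.

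The main obstacle I anticipate is the bookkeeping in \eqref{modesti}: verifying that the coefficient matrix $A$ is genuinely invertible, i.e.\ that the four orthogonality conditions are matched to the four families of modulation parameters through the algebraic identities so that no component is left undetermined, and checking that every term mixing $\Mod$ with $\varepsilon$ is of size $o(1)\cdot|\Mod|$ rather than $O(1)\cdot|\Mod|$ so that it can be moved to the left-hand side. The parity argument underlying \eqref{defbesti} is the other delicate point, since one must confirm that the $w$-dependent pieces of the cubic expansion of $g$ and the odd modulation terms do not secretly reintroduce an $s^{-3}$ contribution to the $b$-equation; here the radial symmetry of $\Lambda Q$, $\rho$, and $|y|^2Q$ and the bootstrap bound $|w|<s^{-3/2}$ are essential.
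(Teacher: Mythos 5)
Your treatment of \eqref{ortho} is exactly the paper's (mass conservation plus the expansion of $\|Q+\varepsilon\|_2^2$), and your route to \eqref{defbesti} --- via $L_+\Lambda Q=-2Q$, $(Q,\Lambda Q)_2=0$, parity, and the second estimate of Proposition \ref{Psiesti} --- is also the paper's. However, there is a genuine gap in your proof of \eqref{modesti}: the claim that, after differentiating the orthogonality conditions \eqref{orthocondi}, ``the purely linear-in-$\varepsilon$ contributions either cancel against the orthogonality conditions or reduce to the quadratically small quantity $(\varepsilon,Q)_2$'' fails for the translation direction. Differentiating $(\varepsilon,yQ)_2=0$ and using $L_-y_jQ=-\partial_{y_j}Q$ produces the linear term $(\im\varepsilon,\partial_{y_j}Q)_2$, and $\nabla Q$ is not among the four orthogonality directions: the condition $(\varepsilon,yQ)_2=0$ constrains only $\re\varepsilon$, and no algebraic identity sends $\nabla Q$ back to a constrained direction. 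With only Cauchy--Schwarz and the bootstrap \eqref{bootstrap}, this term is $O(\|\varepsilon\|_{H^1})=O(s^{-L})$; since the $w$-equation determines $\frac{1}{\lambda}\frac{\partial w}{\partial s}$ (with coefficient $(\partial_{y_j}Q,y_jQ)_2=-\frac12\|Q\|_2^2$), you would only conclude $\left|\frac{\partial w}{\partial s}\right|\lesssim \lambda\, s^{-L}\approx s^{-1-L}=s^{-\frac52-\frac1K}$, strictly weaker than the asserted $s^{-3}$. Your ``invertible matrix'' step cannot repair this, because the defect sits in the right-hand side $R$, not in $A$.

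The paper closes precisely this hole with a separate input, Lemma \ref{imepsiesti}: using the near-conservation of the momentum $\im\int_{\mathbb{R}^N} u\nabla\overline{u}\,dx$ (whose time derivative is controlled through $\nabla V$, $\nabla g$, \eqref{V1growth} and the bootstrap), together with the choice of initial data \eqref{initial} which has vanishing initial momentum, it proves $\left|(\im\varepsilon(s),\nabla Q)_2\right|\lesssim s^{-2}$. Inserting this into the $w$-equation gives $\left|\frac{\partial w}{\partial s}\right|\lesssim s^{-3}+s^{-1}|\Mod(s)|$, and only then does the absorption argument yield \eqref{modesti}. Without this momentum estimate (or some substitute for it) your argument proves at best $|\Mod(s)|\lesssim s^{-\frac52-\frac1K}$. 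A second, minor inaccuracy: the cross terms are $O\left(s\|\varepsilon\|_{H^1}|\Mod|\right)$ rather than $O\left(\|\varepsilon\|_{H^1}|\Mod|\right)$, since the modulation term $\frac{1}{\lambda}\frac{\partial w}{\partial s}\cdot\nabla(Q+\varepsilon)$ carries a factor $\frac{1}{\lambda}\approx s$; this is harmless ($s\|\varepsilon\|_{H^1}\lesssim s^{-\frac12-\frac1K}=o(1)$, so your absorption step and the deduction of \eqref{defbesti} survive), but the bookkeeping should reflect it.
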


\begin{proof}
According to the mass conservation, we have
\[
2\left(\varepsilon,Q\right)_2=\left\|u\right\|_2^2-\left\|Q\right\|_2^2-\left\|\varepsilon\right\|_2^2=-\left\|\varepsilon\right\|_2^2,
\]
meaning \eqref{ortho} holds.

From Lemma \ref{theorem:epsieq},
\begin{align*}
i\frac{\partial \varepsilon}{\partial s}&=L_+\re\varepsilon+iL_-\im\varepsilon-\left(g(\lambda y-w)-1\right)f\left(Q+\varepsilon\right)-\left(f(Q+\varepsilon)-f(Q)-df(Q)(\varepsilon)\right)+\lambda^2 V(\lambda y-w)\varepsilon\\
&\hspace{50pt}+i\left(\frac{1}{\lambda}\frac{\partial \lambda}{\partial s}+b\right)\Lambda (Q+\varepsilon)-\left(1-\frac{\partial \gamma}{\partial s}\right)(Q+\varepsilon)-\left(\frac{\partial b}{\partial s}+b^2\right)\frac{|y|^2}{4}(Q+\varepsilon)\\
&\hspace{70pt}+\left(\frac{1}{\lambda}\frac{\partial \lambda}{\partial s}+b\right)b\frac{|y|^2}{2}(Q+\varepsilon)-i\frac{1}{\lambda}\frac{\partial w}{\partial s}\cdot\nabla(Q+\varepsilon)-\frac{1}{2}\frac{b}{\lambda}\frac{\partial w}{\partial s}\cdot y(Q+\varepsilon)+\Psi
\end{align*}
holds.

From the orthogonal properties \ref{orthocondi}, we have
\[
0=\frac{d}{ds}(i\varepsilon,\Lambda Q)_2=\frac{d}{ds}(i\varepsilon,i|\cdot|^2Q)_2=\frac{d}{ds}(i\varepsilon,\rho)_2,\quad 0=\frac{d}{ds}(i\varepsilon,iyQ)_2
\]

For $v=\Lambda Q$, $i|y|^2Q$, $\rho$, or $iy_jQ$, the following estimates hold:
\[
|f\left(Q+\varepsilon\right)-f\left(Q\right)-df(Q)(\varepsilon)||v|\lesssim |\varepsilon|^2,\quad|(\lambda^2 V(\lambda y-w)\varepsilon,v)_2|\lesssim s^{-3}\|\varepsilon\|_2.
\]

Firstly, we obtain
\begin{align*}
0&=\left(i\frac{\partial \varepsilon}{\partial s},\Lambda Q\right)_2\\
&=-2\left(\re\varepsilon,Q\right)_2-\left(f(Q+\varepsilon)-f(Q),(g(\lambda y-w)-1)\Lambda Q\right)_2-\left(f(Q),(g(\lambda y-w)-1)\Lambda Q\right)_2\\
&\hspace{20pt}+O\left(\|\varepsilon\|_{H^1}^2\right)+O\left(s^{-3}\|\varepsilon\|_{H^1}\right)+\frac{1}{4}\|yQ\|_2^2\left(\frac{\partial b}{\partial s}+b^2\right)+O\left(s\|\varepsilon\|_{H^1}|\Mod|\right)+O\left(\lambda^2|w|+\lambda^4\right)\\
&=O\left(\|\varepsilon\|_{H^1}^2\right)+O\left(s^{-3}\|\varepsilon\|_{H^1}\right)+O\left(\lambda^2|w|\right)+\frac{1}{4}\|yQ\|_2^2\left(\frac{\partial b}{\partial s}+b^2\right)+O\left(s\|\varepsilon\|_{H^1}|\Mod|\right).
\end{align*}
Therefore, we obtain
\[
\left|\frac{\partial b}{\partial s}+b^2\right|\lesssim s^{-2L}+s^{-\left(\frac{1}{2}+\frac{1}{K}\right)}|\Mod(s)|.
\]

Secondly, we obtain
\begin{align*}
0&=\left(i\frac{\partial \varepsilon}{\partial s},i|\cdot|^2Q\right)_2\\
&=-4\left(\im\varepsilon,\Lambda Q\right)_2-\left(f(Q+\varepsilon)-f(Q),(g(\lambda y-w)-1)i|\cdot|^2Q\right)_2-\left(f(Q),(g(\lambda y-w)-1)i|\cdot|^2 Q\right)_2\\
&\hspace{20pt}+O\left(\|\varepsilon\|_{H^1}^2\right)+O\left(s^{-3}\|\varepsilon\|_{H^1}\right)-\|yQ\|_2^2\left(\frac{1}{\lambda}\frac{\partial \lambda}{\partial s}+b\right)+O\left(s\|\varepsilon\|_{H^1}|\Mod|\right)\\
&=O\left(\|\varepsilon\|_{H^1}^2\right)+O\left(s^{-3}\|\varepsilon\|_{H^1}\right)-\|yQ\|_2^2\left(\frac{1}{\lambda}\frac{\partial \lambda}{\partial s}+b\right)+O\left(s\|\varepsilon\|_{H^1}|\Mod|\right).
\end{align*}
Therefore, we obtain
\begin{align}
\label{predefbesti}
\left|\frac{1}{\lambda}\frac{\partial \lambda}{\partial s}+b\right|\lesssim s^{-2L}+s^{-\left(\frac{1}{2}+\frac{1}{K}\right)}|\Mod(s)|.
\end{align}

Thirdly, we obtain
\begin{align*}
0&=\left(i\frac{\partial \varepsilon}{\partial s},\rho\right)_2\\
&=\left(\re\varepsilon,|\cdot|^2Q\right)_2-\left(f(Q+\varepsilon)-f(Q),(g(\lambda y-w)-1)\rho\right)_2-\left(f(Q),(g(\lambda y-w)-1)\rho\right)_2\\
&\hspace{20pt}+O\left(\|\varepsilon\|_{H^1}^2\right)+O\left(s^{-3}\|\varepsilon\|_{H^1}\right)-(Q,\rho)_2\left(1-\frac{\partial \gamma}{\partial s}\right)+O\left(s\|\varepsilon\|_{H^1}|\Mod|\right)+\left|\frac{\partial b}{\partial s}+b^2\right|+b\left|\frac{1}{\lambda}\frac{\partial \lambda}{\partial s}+b\right|+O\left(\lambda^2|w|+\lambda^4\right)\\
&=O\left(\|\varepsilon\|_{H^1}^2\right)+O\left(s^{-3}\|\varepsilon\|_{H^1}\right)+O\left(\lambda^2|w|\right)-(Q,\rho)_2\left(1-\frac{\partial \gamma}{\partial s}\right)+\left|\frac{\partial b}{\partial s}+b^2\right|+b\left|\frac{1}{\lambda}\frac{\partial \lambda}{\partial s}+b\right|+O\left(s\|\varepsilon\|_{H^1}|\Mod|\right).
\end{align*}
Therefore, we obtain
\[
\left|1-\frac{\partial\gamma}{\partial s}\right|\lesssim s^{-2L}+s^{-\left(\frac{1}{2}+\frac{1}{K}\right)}|\Mod(s)|.
\]

Fourthly, we obtain
\begin{align*}
0&=\left(i\frac{\partial \varepsilon}{\partial s},iy_jQ\right)_2\\
&=\left(\im\varepsilon,\frac{\partial Q}{\partial y_j}\right)_2-\left(f(Q+\varepsilon)-f(Q),(g(\lambda y-w)-1)iy_j Q\right)_2-\left(f(Q),(g(\lambda y-w)-1)iy_j Q\right)_2\\
&\hspace{20pt}+O\left(\|\varepsilon\|_{H^1}^2\right)+O\left(s^{-3}\|\varepsilon\|_{H^1}\right)-\frac{1}{\lambda}\frac{\partial w_j}{\partial s}\left(\frac{\partial Q}{\partial y_j},y_j Q\right)-\frac{1}{2}\frac{b}{\lambda}\frac{\partial w_j}{\partial s}\|y_jQ\|_2^2+O\left(s\|\varepsilon\|_{H^1}|\Mod|\right)\\
&=O(s^{-2})+O\left(\|\varepsilon\|_{H^1}^2\right)+O\left(s^{-3}\|\varepsilon\|_{H^1}\right)-\frac{1}{\lambda}\frac{\partial w_j}{\partial s}\left(\frac{\partial Q}{\partial y_j},y_j Q\right)-\frac{1}{2}\frac{b}{\lambda}\frac{\partial w_j}{\partial s}\|y_jQ\|_2^2+O\left(s\|\varepsilon\|_{H^1}|\Mod|\right).
\end{align*}
Therefore, we obtain
\[
\left|\frac{\partial w}{\partial s}\right|\lesssim s^{-3}+s^{-1}|\Mod(s)|.
\]

Accordingly, we obtain
\[
\left|\Mod(s)\right|\lesssim s^{-3}+s^{-1}\left|\Mod(s)\right|,
\]
so that \eqref{modesti} holds. Moreover, from \eqref{predefbesti}, we obtain \eqref{defbesti}.
\end{proof}

\section{Modified energy function}
\label{sec:MEF}
In this section, we proceed with a modified version of the technique presented in Le Coz, Martel, and Rapha\"{e}l \cite{LMR} and Rapha\"{e}l and Szeftel \cite{RSEU}. Let $m$, $\epsilon_1$, and $\epsilon_2$ satisfy
\[
1<1+\epsilon_1<\frac{m}{2}<L,\quad 0<\epsilon_2<\frac{m\mu\epsilon_1}{16},
\]
where $\mu$ is from the coercivity \eqref{Lcoer} of $L_+$ and $L_-$. Moreover, we define
\begin{align*}
H(s,\varepsilon)&:=\frac{1}{2}\left\|\varepsilon\right\|_{H^1}^2+\epsilon_2b^2\left\||y|\varepsilon\right\|_2^2-\int_{\mathbb{R}^N}g(\lambda y-w)\left(F(Q(y)+\varepsilon(y))-F(Q(y))-dF(Q(y))(\varepsilon(y))\right)dy\\
&\hspace{200pt}+\frac{1}{2}\lambda^2\int_{\mathbb{R}^N}V(\lambda y-w)|\varepsilon(y)|^2dy,\\
S(s,\varepsilon)&:=\frac{1}{\lambda^m}H(s,\varepsilon).
\end{align*}

\begin{lemma}[Coercivity of $H$]
\label{Hcoer}
For all $s\in(s_*,s_1]$, 
\[
H(s,\varepsilon)\geq \frac{\mu}{4}\|\varepsilon\|_{H^1}^2+\epsilon_2b^2\left\||y|\varepsilon\right\|_2^2.
\]
\end{lemma}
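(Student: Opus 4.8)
The plan is to view $H(s,\varepsilon)$ as a perturbation of the quadratic form generated by $L_+$ and $L_-$ and to invoke the coercivity \eqref{Lcoer}. First I would Taylor-expand the nonlinear integral to second order: writing the integrand as $\tfrac12 d^2F(Q)(\varepsilon,\varepsilon)+R(\varepsilon)$, where $R(\varepsilon)$ collects the cubic and higher terms, and using $d^2F(Q)(\varepsilon,\varepsilon)=(1+\tfrac4N)Q^{4/N}(\re\varepsilon)^2+Q^{4/N}(\im\varepsilon)^2$, the leading part combines with $\tfrac12\|\varepsilon\|_{H^1}^2$ to give exactly $\tfrac12\big(\langle L_+\re\varepsilon,\re\varepsilon\rangle+\langle L_-\im\varepsilon,\im\varepsilon\rangle\big)$. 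Thus
\[
H(s,\varepsilon)=\tfrac12\big(\langle L_+\re\varepsilon,\re\varepsilon\rangle+\langle L_-\im\varepsilon,\im\varepsilon\rangle\big)+\epsilon_2 b^2\||y|\varepsilon\|_2^2-\tfrac12\!\int(g(\lambda y-w)-1)d^2F(Q)(\varepsilon,\varepsilon)dy-\!\int g(\lambda y-w)R(\varepsilon)dy+\tfrac12\lambda^2\!\int V(\lambda y-w)|\varepsilon|^2dy,
\]
and it remains to show that the last three integrals are negligible and that the coercive part dominates.

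Next I would apply \eqref{Lcoer} to the quadratic part. Three of the four obstruction terms vanish by the orthogonality conditions \eqref{orthocondi}, which give $(\re\varepsilon,yQ)_2=(\re\varepsilon,|y|^2Q)_2=(\im\varepsilon,\rho)_2=0$. The remaining term $(\re\varepsilon,Q)_2^2$ is not orthogonal, but the mass relation \eqref{ortho} yields $(\re\varepsilon,Q)_2=-\tfrac12\|\varepsilon\|_2^2$, so $(\re\varepsilon,Q)_2^2=\tfrac14\|\varepsilon\|_2^4\lesssim\|\varepsilon\|_{H^1}^2\cdot\|\varepsilon\|_{H^1}^2$, which is $o(\|\varepsilon\|_{H^1}^2)$ by the bootstrap bound $\|\varepsilon\|_{H^1}^2<s^{-2L}$ in \eqref{bootstrap}. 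Hence the quadratic part is bounded below by $(\tfrac\mu2-o(1))\|\varepsilon\|_{H^1}^2$.

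For the perturbations I would argue as follows. For the $g$-deviation, the flatness \eqref{gflat} together with Lemma \ref{theorem:gTaylor} gives $|g(\lambda y-w)-1|\lesssim(\lambda|y|+|w|)^3+(\lambda|y|+|w|)^{4+r_g}$; pairing this with $d^2F(Q)(\varepsilon,\varepsilon)\lesssim Q^{4/N}|\varepsilon|^2$ and using the exponential decay of $Q$ (Proposition \ref{GSP}) to absorb the polynomial weights, I get a bound $\lesssim(\lambda^3+|w|^3)\|\varepsilon\|_2^2\lesssim s^{-3}\|\varepsilon\|_{H^1}^2$. The cubic remainder obeys $|\int g R(\varepsilon)|\lesssim\|\varepsilon\|_{H^1}^{3}+\|\varepsilon\|_{H^1}^{2+4/N}=o(\|\varepsilon\|_{H^1}^2)$ by Sobolev embedding and the localization of $Q$. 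Finally, since $V(0)=0$ and $\nabla V=O(|x|)$ (see \eqref{V1growth}) force $V=O(|x|^2)$, the potential term is controlled from below by $-C\lambda^4\||y|\varepsilon\|_2^2-C\lambda^2|w|^2\|\varepsilon\|_2^2$; the second piece is $o(\|\varepsilon\|_{H^1}^2)$, while the first is $o(b^2\||y|\varepsilon\|_2^2)$ because $\lambda^4/b^2\approx s^{-2}\to0$.

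Collecting these estimates, the $\|\varepsilon\|_{H^1}^2$-coercivity degrades from $\tfrac\mu2$ to at least $\tfrac\mu4$ once $s_0$ is taken large, while the weighted term $\epsilon_2 b^2\||y|\varepsilon\|_2^2$ survives because the only competing contribution, from the potential, is of strictly smaller order $\lambda^4\||y|\varepsilon\|_2^2=o(b^2\||y|\varepsilon\|_2^2)$. I expect the main obstacle to be the treatment of the single non-orthogonal direction $Q$: it is not removed by the modulation conditions and must instead be handled through the mass-conservation identity \eqref{ortho}, whose smallness is guaranteed only on $(s_*,s_1]$ by the bootstrap. A secondary subtlety, tied to the low regularity of $|u|^{4/N}u$ when $N>4$, is the estimate of $R(\varepsilon)$; here I would exploit the pointwise control of derivatives of $Q$ in Proposition \ref{GSP} rather than a naive Taylor remainder.
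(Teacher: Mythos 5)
Your proposal is correct in substance and follows the same strategy as the paper's (terse) proof: Taylor-expand the nonlinear integral so that $\tfrac12\|\varepsilon\|_{H^1}^2-\tfrac12\int d^2F(Q)(\varepsilon,\varepsilon)\,dy$ reproduces the quadratic forms of $L_+$ and $L_-$, invoke \eqref{Lcoer} with the orthogonality conditions \eqref{orthocondi} killing the directions $(\re\varepsilon,yQ)_2$, $(\re\varepsilon,|y|^2Q)_2$, $(\im\varepsilon,\rho)_2$, control the remaining direction via the mass identity \eqref{ortho} and the bootstrap, and show that the $g$-deviation and the higher-order remainder are $o(\|\varepsilon\|_{H^1}^2)$; the paper leaves the coercivity step implicit, and you spell it out correctly. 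The one genuine difference is the potential term. The paper estimates $\lambda^2\int V(\lambda y-w)|\varepsilon|^2\,dy$ using the integrability assumption \eqref{pote1'}: splitting $V=V_1+V_2$ with $V_1\in L^p$, $V_2\in L^\infty$, H\"older and Sobolev give a bound $\lesssim\bigl(\lambda^{2-N/p}+\lambda^2\bigr)\|\varepsilon\|_{H^1}^2=o(\|\varepsilon\|_{H^1}^2)$, which never touches the weighted norm. You instead use $V(0)=0$ together with the growth conditions, producing the extra contribution $C\lambda^4\||y|\varepsilon\|_2^2$. This is where your argument falls slightly short of the literal statement: there is no surplus in the $\||y|\varepsilon\|_2^2$ direction to absorb this loss (the $H^1$ surplus cannot control it), so what you actually prove is $H\geq\frac{\mu}{4}\|\varepsilon\|_{H^1}^2+(1-Cs^{-2})\epsilon_2 b^2\||y|\varepsilon\|_2^2$, i.e.\ the coefficient of the weighted term degrades to $(1-o(1))\epsilon_2$ rather than exactly $\epsilon_2$. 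This weakening is harmless downstream --- Corollary \ref{Sesti}, Lemma \ref{Sdef} (which only needs $m$ times the weighted coefficient to exceed $(\frac m2+1+\epsilon_1)\epsilon_2$, with margin $\frac m2-1-\epsilon_1>0$), and the bootstrap all survive with a constant $(1-\delta)\epsilon_2$ for small $\delta$ --- but to obtain the inequality exactly as stated you should estimate the potential term via \eqref{pote1'} as the paper does, reserving the growth conditions \eqref{V1growth}--\eqref{V2growth} for the error term $\Psi$ where they are actually needed.
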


\begin{proof}
Firstly, we have
\begin{align*}
&\int_{\mathbb{R}^N}\left(F(Q(y)+\varepsilon(y))-F(Q(y))-dF(Q(y))(\varepsilon(y))-\frac{1}{2}d^2F(Q(y))(\varepsilon(y),\varepsilon(y))\right)dy\\
=&\ O\left(\|\varepsilon\|_{H^1}^3+\|\varepsilon\|_{H^1}^{2+\frac{4}{N}}\right).
\end{align*}

Secondly, according to \eqref{pote1'}, we have
\[
\left|\lambda^2\int_{\mathbb{R}^N}V(\lambda y-w)|\varepsilon(y)|^2dy\right|=o\left(\|\varepsilon\|_{H^1}^2\right).
\]

Thirdly, we have
\begin{align*}
\left|\left(g(\lambda y-w)-1\right)\left(F(Q(y)+\varepsilon(y))-F(Q(y))-dF(Q(y))(\varepsilon(y))\right)\right|&\lesssim\left|\left(g(\lambda y-w)-1\right)\right|(Q^\frac{4}{N}+|\varepsilon|^\frac{4}{N})|\varepsilon|^2\\
\lesssim (\lambda^3+|w|^3)|\varepsilon|^2+|\varepsilon|^{2+\frac{4}{N}}.
\end{align*}
Therefore, we obtain
\[
\int_{\mathbb{R}^N}\left)g(\lambda y-w)-1\right)\left(F(Q(y)+\varepsilon(y))-F(Q(y))-dF(Q(y))(\varepsilon(y))\right)dy=o(\|\varepsilon\|_{H^1}^2)
\]

Finally, since
\[
\left\|\varepsilon\right\|_{H^1}^2-\int_{\mathbb{R}^N}d^2F(Q(y))(\varepsilon(y),\varepsilon(y))dy=\left(L_+\re\varepsilon,\re\varepsilon\right)_2+\left( L_-\im\varepsilon,\im\varepsilon\right)_2,
\]
we obtain Lemma \ref{Hcoer}.
\end{proof}

From Lemma \ref{Hcoer} and the definition of $S$, we obtain the following:

\begin{corollary}[Estimation of $S$]
\label{Sesti}
For all $s\in(s_*,s_1]$, 
\[
\frac{1}{\lambda^m}\left(\frac{\mu}{4}\|\varepsilon\|_{H^1}^2+\epsilon_2b^2\left\||y|\varepsilon\right\|_2^2\right)\leq S(s,\varepsilon)\lesssim\frac{1}{\lambda^m}\left(\|\varepsilon\|_{H^1}^2+b^2\left\||y|\varepsilon\right\|_2^2\right).
\]
\end{corollary}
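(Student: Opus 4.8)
The plan is to deduce both inequalities directly from the coercivity of $H$ established in Lemma \ref{Hcoer}, paired with a matching upper bound on each term defining $H$. Since $\lambda(s)>0$, the left-hand inequality is immediate: dividing the bound $H(s,\varepsilon)\geq\frac{\mu}{4}\|\varepsilon\|_{H^1}^2+\epsilon_2b^2\||y|\varepsilon\|_2^2$ from Lemma \ref{Hcoer} by $\lambda^m$ and recalling $S(s,\varepsilon)=\lambda^{-m}H(s,\varepsilon)$ yields the lower bound with no further work.

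For the upper bound it suffices to show $H(s,\varepsilon)\lesssim\|\varepsilon\|_{H^1}^2+b^2\||y|\varepsilon\|_2^2$ and then divide by $\lambda^m$. I would bound the four terms defining $H$ separately. The quadratic term $\frac{1}{2}\|\varepsilon\|_{H^1}^2$ and the weighted term $\epsilon_2b^2\||y|\varepsilon\|_2^2$ are already of the required form. For the nonlinear term I would use that $g$ is bounded by \eqref{gint} together with the pointwise Taylor estimate $|F(Q+\varepsilon)-F(Q)-dF(Q)(\varepsilon)|\lesssim Q^{\frac{4}{N}}|\varepsilon|^2+|\varepsilon|^{2+\frac{4}{N}}$; integrating and applying the Gagliardo--Nirenberg inequality gives a bound of order $\|\varepsilon\|_2^2+\|\varepsilon\|_{H^1}^{2+\frac{4}{N}}$. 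Finally, the potential term was already controlled inside the proof of Lemma \ref{Hcoer}, where \eqref{pote1'} yields $|\lambda^2\int_{\mathbb{R}^N}V(\lambda y-w)|\varepsilon(y)|^2dy|=o(\|\varepsilon\|_{H^1}^2)$, so it is in particular $\lesssim\|\varepsilon\|_{H^1}^2$.

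The only point requiring care is absorbing the supercritical contribution $\|\varepsilon\|_{H^1}^{2+\frac{4}{N}}$ into $\|\varepsilon\|_{H^1}^2$. This is where the bootstrap regime enters: on $(s_*,s_1]$ the assumption \eqref{bootstrap} gives $\|\varepsilon(s)\|_{H^1}<s^{-L}$, and since $s_0$ (hence $s$) is taken sufficiently large, $\|\varepsilon\|_{H^1}\ll1$, whence $\|\varepsilon\|_{H^1}^{2+\frac{4}{N}}\lesssim\|\varepsilon\|_{H^1}^2$. Collecting the four estimates then gives $H(s,\varepsilon)\lesssim\|\varepsilon\|_{H^1}^2+b^2\||y|\varepsilon\|_2^2$, and dividing by $\lambda^m$ completes the upper bound. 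I do not anticipate a genuine obstacle: the content of the corollary is essentially a bookkeeping consequence of the estimates already assembled for Lemma \ref{Hcoer}, rearranged into a two-sided comparison.
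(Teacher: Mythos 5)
Your proposal is correct and follows the same route as the paper: the lower bound is exactly Lemma \ref{Hcoer} divided by $\lambda^m$, and the upper bound comes from bounding each term of $H$ (using the boundedness of $g$, the pointwise Taylor estimate for $F$, the potential estimate from \eqref{pote1'}, and the bootstrap smallness of $\|\varepsilon\|_{H^1}$ to absorb the supercritical power), which is precisely what the paper treats as immediate from Lemma \ref{Hcoer} and the definition of $S$.
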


\begin{lemma}
\label{Lambda}
For all $s\in(s_*,s_1]$, 
\begin{align}
\label{flambdaesti}
\left|\left((g(\lambda y-w)-1)f(Q+\varepsilon)-f(Q),\Lambda \varepsilon\right)_2\right|&\lesssim \|\varepsilon\|_{H^1}^2,\\
\label{fdefesti}
\left|\left((g(\lambda y-w)-1)f(Q+\varepsilon)-f(Q),\nabla \varepsilon\right)_2\right|&\lesssim \|\varepsilon\|_{H^1}^2,\\
\label{Vlambdaesti}
\left|\lambda^2\left( V(\lambda y-w)\varepsilon,\Lambda\varepsilon\right)_2\right|&\lesssim s^{-2}\left(\|\varepsilon\|_{H^1}^2+b^2\||y|\varepsilon\|_2^2\right),\\
\label{Vdefesti}
\left|\lambda^2\left( V(\lambda y-w)\varepsilon,\nabla\varepsilon\right)_2\right|&\lesssim s^{-3}\left(\|\varepsilon\|_{H^1}^2+b^2\||y|\varepsilon\|_2^2\right).
\end{align}
\end{lemma}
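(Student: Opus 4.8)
The plan is to treat all four quantities as quadratic forms in $\varepsilon$ and to absorb the derivative---and, for the terms involving $\Lambda$, the unbounded weight $|y|$---produced by $\Lambda\varepsilon=\frac{N}{2}\varepsilon+y\cdot\nabla\varepsilon$ and by $\nabla\varepsilon$. I would dispose of the potential terms \eqref{Vlambdaesti} and \eqref{Vdefesti} first, since they are the cleaner ones. Writing $(V(\lambda y-w)\varepsilon,\Lambda\varepsilon)_2=\int V(\lambda y-w)\re(\overline{\varepsilon}\,\Lambda\varepsilon)$ and using the pointwise identities $\re(\overline{\varepsilon}\,\Lambda\varepsilon)=\frac{N}{2}|\varepsilon|^2+\frac12 y\cdot\nabla|\varepsilon|^2$ and $\re(\overline{\varepsilon}\,\partial_j\varepsilon)=\frac12\partial_j|\varepsilon|^2$, I integrate by parts. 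In the $\Lambda$ case the two contributions proportional to $V$ itself cancel, leaving only $-\frac{\lambda^3}{2}\int \bigl(y\cdot(\nabla V)(\lambda y-w)\bigr)|\varepsilon|^2$; in the $\nabla$ case the derivative falls solely on $V$, giving $-\frac{\lambda^3}{2}\int (\partial_j V)(\lambda y-w)|\varepsilon|^2$. At this point \eqref{V1growth} supplies $|(\nabla V)(\lambda y-w)|\lesssim \lambda|y|+|w|$, and the bootstrap bounds \eqref{bootstrap} give $\lambda\approx b\approx s^{-1}$ and $|w|\lesssim s^{-3/2}$. For \eqref{Vlambdaesti} this yields $\lambda^4\||y|\varepsilon\|_2^2\approx s^{-2}b^2\||y|\varepsilon\|_2^2$ plus a lower-order piece; for \eqref{Vdefesti}, where only one power of $|y|$ appears, I would split $\int|y||\varepsilon|^2$ by a weighted Cauchy--Schwarz with balancing parameter $\approx b$, i.e. $\int|y||\varepsilon|^2\le \frac{b}{2}\||y|\varepsilon\|_2^2+\frac{1}{2b}\|\varepsilon\|_2^2$, which lands exactly on $s^{-3}\bigl(\|\varepsilon\|_{H^1}^2+b^2\||y|\varepsilon\|_2^2\bigr)$.

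For the nonlinear estimates \eqref{flambdaesti} and \eqref{fdefesti} I would first isolate the genuinely quadratic ``easy'' part by splitting $f(Q+\varepsilon)-f(Q)=df(Q)(\varepsilon)+R(\varepsilon)$ with $R(\varepsilon):=f(Q+\varepsilon)-f(Q)-df(Q)(\varepsilon)$. Since $|df(Q)(\varepsilon)|\lesssim Q^{\frac4N}|\varepsilon|$ and $g$ is bounded by \eqref{gint}, the linear part paired with $\Lambda\varepsilon$ or $\nabla\varepsilon$ produces $\int Q^{\frac4N}|\varepsilon|\,(|\varepsilon|+|y||\nabla\varepsilon|)$, which is $\lesssim\|\varepsilon\|_{H^1}^2$ because $Q^{\frac4N}$ and $Q^{\frac4N}|y|$ are bounded (Proposition~\ref{GSP} and the exponential decay of $Q$). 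The remainder $R(\varepsilon)$ is where the real work lies.

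The main obstacle is the remainder paired with the derivative of $\varepsilon$. In high dimension $f(z)=|z|^{\frac4N}z$ is only $C^{1,\frac4N}$, so the best available pointwise bound is $|R(\varepsilon)|\lesssim|\varepsilon|^{1+\frac4N}$; a direct estimate of $\int|R(\varepsilon)|\,|\nabla\varepsilon|$ would then require $\int|\varepsilon|^{1+\frac4N}|\nabla\varepsilon|\lesssim\|\varepsilon\|_{H^1}^2$, i.e. the embedding $H^1\hookrightarrow L^{2+\frac8N}$, which fails for $N\ge 5$. To circumvent this I would never let the derivative act on $\varepsilon$: using the exact identity $\re\bigl(f(u)\,\overline{\nabla u}\bigr)=\nabla F(u)$ (and its scaling analogue expressing $\re(f(u)\,\overline{\Lambda u})$ through $F(u)$ and $y\cdot\nabla F(u)$) with $u=Q+\varepsilon$, I integrate by parts in the whole nonlinear pairing, transferring the derivative onto $g$ and $Q$. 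The derivative hitting $Q$ creates exponentially decaying weights handled as above; the derivative hitting $g$ creates $\nabla g(\lambda y-w)$, which by the flatness \eqref{gflat} vanishes to second order at the origin and, decisively for the $\Lambda$-weight $|y|$, satisfies $\lambda y\cdot(\nabla g)(\lambda y-w)=O(1)$ thanks to $x\cdot\nabla g\in L^\infty$ in \eqref{gint}. What survives is a derivative-free quadratic expression of the form $\int(\text{bounded})\,\bigl(F(Q+\varepsilon)-F(Q)-dF(Q)(\varepsilon)\bigr)$, controlled by $\int Q^{\frac4N}|\varepsilon|^2+|\varepsilon|^{2+\frac4N}\lesssim\|\varepsilon\|_{H^1}^2$ via Gagliardo--Nirenberg at the admissible exponent $2+\frac4N$; crucially the zeroth- and first-order-in-$\varepsilon$ terms generated by the integration by parts cancel identically, because the original integrand is $O(|\varepsilon|^2)$. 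Assembling these pieces with the bootstrap bounds gives the four estimates, the high-dimensional regularity difficulty being exactly the point where the integration-by-parts reduction together with the hypothesis $x\cdot\nabla g\in L^\infty$ and the decay of $Q$ are indispensable.
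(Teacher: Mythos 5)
Your proposal is correct and takes essentially the same route as the paper: for \eqref{Vlambdaesti}--\eqref{Vdefesti} the paper likewise integrates by parts so the derivative lands on the potential, obtaining $-\tfrac12\left(\lambda y\cdot(\nabla V)(\lambda y-w)\varepsilon,\varepsilon\right)_2$ and $-\tfrac12\left(\lambda(\nabla V)(\lambda y-w)\varepsilon,\varepsilon\right)_2$, and for \eqref{flambdaesti}--\eqref{fdefesti} it uses precisely your integration-by-parts device, expanding $\nabla\bigl(g(\lambda y-w)\bigl(F(Q+\varepsilon)-F(Q)-dF(Q)(\varepsilon)\bigr)\bigr)$ so that every derivative falls on $g$ or $Q$ rather than on $\varepsilon$, with $x\cdot\nabla g\in L^\infty$ absorbing the $\Lambda$-weight and the surviving derivative-free quadratic remainders bounded exactly as you indicate. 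The only slip is that \eqref{V1growth} together with \eqref{pote2} gives $|(\nabla V)(\lambda y-w)|\lesssim 1+\lambda|y|+|w|$ rather than $\lambda|y|+|w|$ (since $\nabla V(0)$ need not vanish), but the extra constant term is absorbed by the same weighted Cauchy--Schwarz with parameter $b$ that you already employ, so the stated conclusions are unchanged.
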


\begin{proof}

Firstly,
\begin{align*}
&\nabla \left(g(\lambda y-w)\left(F(Q+\varepsilon)-F(Q)-dF(Q)(\varepsilon)\right)\right)\\
=&\lambda(\nabla g)(\lambda y-w)\left(F(Q+\varepsilon)-F(Q)-dF(Q)(\varepsilon)\right)\\
&\hspace{20pt}+g(\lambda y-w)\re\left(f(Q+\varepsilon)\nabla(Q+\varepsilon)-f(Q)\nabla Q-df(Q)(\varepsilon)\nabla Q-f(Q)\nabla\overline{\varepsilon}\right)\\
=&\lambda(\nabla g)(\lambda y-w)\left(F(Q+\varepsilon)-F(Q)-dF(Q)(\varepsilon)\right)+g(\lambda y-w)\re\left(f(Q+\varepsilon)-f(Q)-df(Q)(\varepsilon)\right)\nabla Q\\
&\hspace{20pt}-g(\lambda y-w)\re\left(\left(f(Q+\varepsilon)-f(Q)\right)\nabla\overline{\varepsilon}\right)
\end{align*}
Therefore, we obtain
\begin{align*}
&g(\lambda y-w)\re\left(\left(f(Q+\varepsilon)-f(Q)\right)\Lambda\overline{\varepsilon}\right)\\
=&\frac{N}{2}g(\lambda y-w)\re\left(\left(f(Q+\varepsilon)-f(Q)\right)\overline{\varepsilon}\right)+y\cdot\nabla\left(g(\lambda y-w)\left(F(Q+\varepsilon)-F(Q)-dF(Q)(\varepsilon)\right)\right)\\
&\hspace{20pt}-w\cdot(\nabla g)(\lambda y-w)\left(F(Q+\varepsilon)-F(Q)-dF(Q)(\varepsilon)\right)-(\lambda y-w)\cdot(\nabla g)(\lambda y-w)\left(F(Q+\varepsilon)-F(Q)-dF(Q)(\varepsilon)\right)\\
&\hspace{40pt}g(\lambda y-w)\re\left(f(Q+\varepsilon)-f(Q)-df(Q)(\varepsilon)\right)y\cdot\nabla Q
\end{align*}
and
\begin{align*}
&g(\lambda y-w)\re\left(\left(f(Q+\varepsilon)-f(Q)\right)\nabla\overline{\varepsilon}\right)\\
=&\nabla\left(g(\lambda y-w)\left(F(Q+\varepsilon)-F(Q)-dF(Q)(\varepsilon)\right)\right)-\lambda(\nabla g)(\lambda y-w)\left(F(Q+\varepsilon)-F(Q)-dF(Q)(\varepsilon)\right)\\
&\hspace{40pt}g(\lambda y-w)\re\left(f(Q+\varepsilon)-f(Q)-df(Q)(\varepsilon)\right)\nabla Q.
\end{align*}
Therefore, we obtain
\begin{align*}
\left((g(\lambda y-w)-1)f(Q+\varepsilon)-f(Q),\Lambda \varepsilon\right)_2&=O(\|\varepsilon\|_{H^1}^2),\\
\left((g(\lambda y-w)-1)f(Q+\varepsilon)-f(Q),\nabla \varepsilon\right)_2&=O(\|\varepsilon\|_{H^1}^2),
\end{align*}
so that \eqref{flambdaesti} and \eqref{fdefesti} hold.

For \eqref{Vlambdaesti}, from \eqref{V1growth}, a direct calculation shows
\[
\left(V(\lambda y-w)\varepsilon,\Lambda\varepsilon\right)_2=-\frac{1}{2}\left(\lambda y\cdot(\nabla V)(\lambda y-w)\varepsilon,\varepsilon\right)_2=O\left(\lambda\|y\varepsilon\|_2\|(1+|\lambda y-w|)\varepsilon\|_2\right).
\]
Therefore, we obtain \eqref{Vlambdaesti}.

For \eqref{Vdefesti}, from \eqref{V1growth}, a direct calculation shows
\[
\left(V(\lambda y-w)\varepsilon,\nabla\varepsilon\right)_2=-\frac{1}{2}\left(\lambda (\nabla V)(\lambda y-w)\varepsilon,\varepsilon\right)_2=O\left(\lambda\|\varepsilon\|_2\|(1+|\lambda y-w|)\varepsilon\|_2\right).
\]
Therefore, we obtain \eqref{Vdefesti}.
\end{proof}

We define $\kappa$ by
\[
\kappa:=\frac{1}{4}-\frac{1}{2K}=\frac{2-L}{2}.
\]

\begin{lemma}[Derivative of $H$ in time]
\label{Hdef}
For all $s\in(s_*,s_1]$, 
\[
\frac{d}{ds}H(s,\varepsilon(s))\geq -b\left(\frac{4\epsilon_2}{\epsilon_1}\|\varepsilon\|_{H^1}^2+\left(\frac{m}{2}+1+\epsilon_1\right)\epsilon_2b^2\left\||y|\varepsilon\right\|_2^2+C's^{-4}\right).
\]
\end{lemma}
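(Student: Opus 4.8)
The plan is to differentiate $H(s,\varepsilon(s))$ directly in $s$ and substitute the equation \eqref{epsieq}. It is convenient to introduce the $L^2$-gradient of the unweighted part of $H$,
\[
A_0:=(1-\Delta)\varepsilon-g(\lambda y-w)\left(f(Q+\varepsilon)-f(Q)\right)+\lambda^2V(\lambda y-w)\varepsilon,
\]
so that, by the chain rule,
\[
\frac{d}{ds}H=\left(A_0+2\epsilon_2b^2|y|^2\varepsilon,\partial_s\varepsilon\right)_2+2\epsilon_2b(\partial_s b)\||y|\varepsilon\|_2^2+R_{\mathrm{expl}},
\]
where $R_{\mathrm{expl}}$ collects the derivatives falling on the $s$-dependent coefficients $g(\lambda y-w)$ and $\lambda^2V(\lambda y-w)$ through $\lambda$ and $w$. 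Rearranging \eqref{epsieq} exactly as in the proof of Lemma \ref{modtermesti} gives $i\partial_s\varepsilon=A_0+R_1$ with $R_1=-(g(\lambda y-w)-1)f(Q)+(\text{modulation terms})+\Psi$, hence $\partial_s\varepsilon=-i(A_0+R_1)$.

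The key algebraic fact is the Hamiltonian cancellation $(A_0,-iA_0)_2=\re(i\int|A_0|^2)=0$, so the leading quadratic contribution disappears and only cross terms survive. The first genuinely non-negligible term is $2\epsilon_2b(\partial_s b)\||y|\varepsilon\|_2^2$: using $\partial_s b=-b^2+O(s^{-2L})$ from \eqref{defbesti} this equals $-2\epsilon_2b^3\||y|\varepsilon\|_2^2$ up to an $O(bs^{-4})$ error (by \eqref{bootstrap} and $L>\tfrac32$). The second comes from $2\epsilon_2b^2(|y|^2\varepsilon,-iA_0)_2$: the nonlinear and potential pieces of $A_0$ are localised by the decay of $Q$ and are harmless, while the $(1-\Delta)$ piece produces, after integration by parts, the virial quantity $-4\epsilon_2b^2\,\im\int y\cdot\overline{\nabla\varepsilon}\,\varepsilon$ (the multiplication-by-$1$ part vanishes since $\im\int|y|^2|\varepsilon|^2=0$). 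Bounding this by $4\epsilon_2b^2\||y|\varepsilon\|_2\|\nabla\varepsilon\|_2$ and applying Young's inequality with weight tuned to $\epsilon_1$ yields the contribution $-b\big(\tfrac{4\epsilon_2}{\epsilon_1}\|\nabla\varepsilon\|_2^2+\epsilon_1\epsilon_2b^2\||y|\varepsilon\|_2^2\big)$, which is precisely the source of the constants in the statement; together with the first term this produces the coefficient $2+\epsilon_1$ on $\epsilon_2b^2\||y|\varepsilon\|_2^2$.

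It then remains to absorb everything else into $-C'bs^{-4}$. The cross term $(A_0+2\epsilon_2b^2|y|^2\varepsilon,-iR_1)_2$ is estimated piecewise: flatness \eqref{gflat} makes $(g(\lambda y-w)-1)f(Q)$ of size $O(\lambda^2+|w|^2)$ against the $Q$-weight; the modulation factors are $O(s^{-3})$ by \eqref{modesti}; and $\Psi$ is controlled by Proposition \ref{Psiesti}, where $\nabla V(0)=0$ (a consequence of $\nabla V=O(|x|)$) improves the bound to $\|\Psi\|_{H^1}\lesssim\lambda^2(\lambda+|w|)^2\lesssim s^{-4}$, so that pairing against $A_0\sim(1-\Delta)\varepsilon$ gives $\lesssim\|\varepsilon\|_{H^1}\|\Psi\|_{H^1}\lesssim bs^{-4}$. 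The explicit term $R_{\mathrm{expl}}$ is handled using flatness for the $g$-part (so that $\nabla g(\lambda y-w)=O((\lambda|y|+|w|)^2)$ tames $y\cdot\nabla g$ on the support of the $Q$-localised $F$-difference) and Lemma \ref{Lambda} together with \eqref{pote1'} for the $V$-part; the localised lower-order pieces of $A_0$ paired against $|y|^2\varepsilon$ are likewise $O(\|\varepsilon\|_2^2)$ and absorbed. Finally, using $\|\nabla\varepsilon\|_2\le\|\varepsilon\|_{H^1}$ and $\tfrac m2+1>2$ (valid since $m>2(1+\epsilon_1)$) weakens the coefficient $2+\epsilon_1$ into the stated $\tfrac m2+1+\epsilon_1$, completing the bound.

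The main obstacle is the error bookkeeping in the last step: every remainder must be shown to be genuinely $O(bs^{-4})$, a strictly smaller power of $s$ than the leading terms $b^3\||y|\varepsilon\|_2^2\approx s^{-2L-1}$. The two places demanding the sharpest care are the $\Psi$-contribution — where the crude estimate $\|\Psi\|_{H^1}\lesssim\lambda^2(\lambda+|w|)\approx s^{-3}$ is insufficient and one must exploit $|V(x)|\lesssim|x|^2$ to gain the extra power — and the $V$-pairings, whose $O(|x|)$ growth of $\nabla V$ must be compensated by the $\lambda^2$ prefactor and the weights carried by $Q$ and $b|y|\varepsilon$, exactly as quantified in Lemma \ref{Lambda}. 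Extracting the virial cross term with the correct Young constants, rather than overestimating it, is what pins down the coefficients $\tfrac{4\epsilon_2}{\epsilon_1}$ and $\epsilon_1$.
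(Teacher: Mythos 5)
Your overall architecture coincides with the paper's: you write $i\partial_s\varepsilon=A_0+R_1$ (your $A_0$ is exactly the paper's $\frac{\partial H}{\partial\varepsilon}-2\epsilon_2b^2|y|^2\varepsilon$), exploit the Hamiltonian cancellation $(A_0,-iA_0)_2=0$, extract the virial cross term $-4\epsilon_2b^2\im\int y\cdot\nabla\overline{\varepsilon}\,\varepsilon$, and close with an $\epsilon_1$-tuned Young inequality; the identification of the main terms and the coefficient count $2+\epsilon_1\leq\frac{m}{2}+1+\epsilon_1$ are correct, and your treatment of $2\epsilon_2b(\partial_sb)\||y|\varepsilon\|_2^2$ via \eqref{defbesti} is even slightly sharper than the paper's. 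The gap is in the final absorption step, and it is twofold. First, your improved bound $\|\Psi\|_{H^1}\lesssim\lambda^2(\lambda+|w|)^2\lesssim s^{-4}$ rests on $\nabla V(0)=0$, which you claim follows from \eqref{V1growth}; but \eqref{V1growth} is a growth condition at infinity, not a flatness condition at the origin. The paper's own Proposition \ref{Psiesti} retains the $\nabla V(0)$ term in the Taylor expansion (this is precisely why its conclusion is $|(\Psi,\varphi)_2|\lesssim\lambda^2|w|+\lambda^4$ rather than $\lambda^4$), and Lemma \ref{modtermesti} carries the resulting $O(\lambda^2|w|)$ contribution throughout. So only $\|\Psi\|_{H^1}\lesssim\lambda^2(\lambda+|w|)\approx s^{-3}$ is available.

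Second, and independently of $\Psi$, the plan to ``absorb everything else into $-C'bs^{-4}$'' fails quantitatively. The cross term involving $(g(\lambda y-w)-1)f(Q)$ is, even after using the cubic flatness \eqref{gflat}, of size $s^{-3}\|\varepsilon\|_{H^1}$; under the bootstrap \eqref{bootstrap} this is $\approx s^{-3-L}$ with $L=\frac{3}{2}+\frac{1}{K}<2$, hence strictly larger than $bs^{-4}\approx s^{-5}$. The same is true of the unimproved $\Psi$-pairing, and the $V$-terms hidden in $R_{\mathrm{expl}}$ produce contributions like $b\lambda^{2-\frac{N}{p}}\|\varepsilon\|_{H^1}^2$, which are $o(b\|\varepsilon\|_{H^1}^2)$ but not $O(bs^{-4})$. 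Such terms can only be handled as the paper does: split $s^{-3}\|\varepsilon\|_{H^1}\leq\epsilon_3b\|\varepsilon\|_{H^1}^2+C(\epsilon_3)s^{-5}$ and then absorb the $\epsilon_3b\|\varepsilon\|_{H^1}^2$ and $o(b\|\varepsilon\|_{H^1}^2)$ pieces into the coefficient of $b\|\varepsilon\|_{H^1}^2$. But your Young step saturates that coefficient exactly at $\frac{4\epsilon_2}{\epsilon_1}$, leaving zero slack (the margin $\|\varepsilon\|_{H^1}^2-\|\nabla\varepsilon\|_2^2=\|\varepsilon\|_2^2$ does not dominate $\|\varepsilon\|_{H^1}^2$, so it cannot absorb these remainders). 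The paper avoids this by taking the Young constants $\frac{2\epsilon_2}{\epsilon_1}$ and $2\epsilon_1\epsilon_2$, so that in the end $2\left(\frac{\epsilon_2}{\epsilon_1}+\epsilon_3+\epsilon\right)\leq\frac{4\epsilon_2}{\epsilon_1}$ and $2(1+\epsilon_1+\epsilon)\leq\frac{m}{2}+1+\epsilon_1$. Your argument is repairable the same way---retune the Young weight (say to $\epsilon_1+\delta$, or halve the constants) to create fixed slack in the $H^1$ coefficient, and Young-split the $s^{-3}\|\varepsilon\|_{H^1}$ terms instead of forcing them into $C'bs^{-4}$---but as written the final bookkeeping does not close.
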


\begin{proof}
Firstly, we have
\[
\frac{d}{ds}H(s,\varepsilon(s))=\frac{\partial H}{\partial s}(s,\varepsilon(s))+\left\langle i\frac{\partial H}{\partial \varepsilon},i\frac{\partial \varepsilon}{\partial s}\right\rangle.
\]
Secondly, we have
\begin{align*}
\frac{\partial H}{\partial \varepsilon}&=-\Delta \varepsilon+\varepsilon+2\epsilon_2b^2|y|^2\varepsilon-g(\lambda y-w)(f(Q+\varepsilon)-f(Q))+\lambda^2 V(\lambda y-w)\varepsilon\\
&=L_+\re\varepsilon+iL_-\im\varepsilon+2\epsilon_2b^2|y|^2\varepsilon-\left(g(\lambda y-w)-1\right)df(Q)(\varepsilon)\\
&\hspace{20pt}-g(\lambda y-w)(f(Q+\varepsilon)-f(Q)-df(Q)(\varepsilon))+\lambda^2 V(\lambda y-w)\varepsilon,\\
i\frac{\partial \varepsilon}{\partial s}&=\frac{\partial H}{\partial \varepsilon}-2\epsilon_2b^2|y|^2\varepsilon-(g(\lambda y-w)-1)f(Q)+\ModOp(Q+\varepsilon)+\Psi,
\end{align*}
where
\[
\ModOp v:=i\left(\frac{1}{\lambda}\frac{\partial \lambda}{\partial s}+b\right)\Lambda v-\left(1-\frac{\partial \gamma}{\partial s}\right)v-\left(\frac{\partial b}{\partial s}+b^2\right)\frac{|y|^2}{4}v+\left(\frac{1}{\lambda}\frac{\partial \lambda}{\partial s}+b\right)b\frac{|y|^2}{2}c-i\frac{1}{\lambda}\frac{\partial w}{\partial s}\cdot\nabla v-\frac{1}{2}\frac{b}{\lambda}\frac{\partial w}{\partial s}\cdot yv.
\]

For $\frac{\partial H}{\partial s}$, we have
\begin{align*}
\frac{\partial H}{\partial s}&=2\epsilon_2b\frac{\partial b}{\partial s}\|y\varepsilon\|_2^2-\int_{\mathbb{R}^N}\left(\frac{\partial \lambda}{\partial s}y-\frac{\partial w}{\partial s}\right)\cdot(\nabla g)(\lambda y-w)\left(F(Q+\varepsilon)-F(Q)-dF(Q)(\varepsilon)\right)dy\\
&\hspace{20pt}+\lambda^2\frac{1}{\lambda}\frac{\partial\lambda}{\partial s}\int_{\mathbb{R}^N}V(\lambda y-w)|\varepsilon|^2dy+\frac{1}{2}\lambda^2\int_{\mathbb{R}^N}\left(\frac{\partial \lambda}{\partial s}y-\frac{\partial w}{\partial s}\right)\cdot(\nabla V)(\lambda y-w)|\varepsilon|^2dy.
\end{align*}
Moreover, since $\frac{1}{\lambda}\frac{\partial \lambda}{\partial s}\approx -b$, $\frac{\partial b}{\partial s}\approx -b^2$, and $\lambda\approx b$, we have
\begin{align*}
2\epsilon_2b\frac{\partial b}{\partial s}\|y\varepsilon\|_2^2&\geq -2(1+\epsilon)\epsilon_2b^3\|y\varepsilon\|_2^2,\\
\left|\lambda^2\int_{\mathbb{R}^N}V(\lambda y-w)|\varepsilon|^2dy\right|&\lesssim \lambda^{2-\frac{N}{p}}\|\varepsilon\|_{H^1}^2,\\
\left|\int_{\mathbb{R}^N}\lambda y\cdot(\nabla V)(\lambda y-w)|\varepsilon|^2dy\right|&\lesssim b\|y\varepsilon\|_2\|(1+\lambda|y|+|w|)\varepsilon\|_2\lesssim \|\varepsilon\|_{H^1}^2+b^2\|y\varepsilon\|_2^2,\\
\left|\int_{\mathbb{R}^N}(\nabla V)(\lambda y-w)|\varepsilon|^2dy\right|&\lesssim \|\varepsilon\|_2\|(1+\lambda|y|+|w|)\varepsilon\|_2\lesssim \|\varepsilon\|_{H^1}^2+b^2\|y\varepsilon\|_2^2,\\
\left|\int_{\mathbb{R}^N}(\nabla g)(\lambda y-w)\left(F(Q+\varepsilon)-F(Q)-dF(Q)(\varepsilon)\right)dy\right|&\lesssim \|\varepsilon\|_{H^1}^2,\\
\left|\int_{\mathbb{R}^N}\left(\lambda y-w\right)\cdot(\nabla g)(\lambda y-w)\left(F(Q+\varepsilon)-F(Q)-dF(Q)(\varepsilon)\right)dy\right|&\lesssim\int_{\mathbb{R}^N}\left|\left(\lambda y-w\right)\cdot(\nabla g)(\lambda y-w)\right|(Q^\frac{4}{N}+|\varepsilon|^\frac{4}{N})|\varepsilon|^2dy\\
&\lesssim (\lambda+|w|)\|\varepsilon\|_2^2+\|\varepsilon\|_{H^1}^{2+\frac{4}{N}}.
\end{align*}
Therefore,
\begin{align}
\label{esti-1}
\frac{\partial H}{\partial s}\geq -2(1+\epsilon)\epsilon_2b^3\|y\varepsilon\|_2^2+o\left(b\left(\|\varepsilon\|_{H^1}^2+b^2\|y\varepsilon\|_2^2\right)\right).
\end{align}

For $\left\langle i\frac{\partial H}{\partial \varepsilon},2\epsilon_2b^2|y|^2\varepsilon\right\rangle$, we have
\begin{align*}
&\left\langle \frac{\partial H}{\partial \varepsilon},2i\epsilon_2b^2|y|^2\varepsilon\right\rangle\\
=&\left\langle -\Delta \varepsilon+\varepsilon+2\epsilon_2b^2|y|^2\varepsilon-g(\lambda y-w)(f(Q+\varepsilon)-f(Q))+\lambda^2 V(\lambda y-w)\varepsilon,2i\epsilon_2b^2|y|^2\varepsilon\right\rangle\\
=&4\epsilon_2b^2(\nabla \varepsilon, iy\varepsilon)_2-2\epsilon_2b^2\left(g(\lambda y-w)\left(|Q+\varepsilon|^\frac{4}{N}-Q^\frac{4}{N}\right)Q,|y|^2\varepsilon\right)_2.
\end{align*}
Therefore, we obtain
\begin{align}
\label{esti-2}
\left|\left\langle i\frac{\partial H}{\partial \varepsilon},2\epsilon_2b^2|y|^2\varepsilon\right\rangle\right|\leq 4\epsilon_2b^2\|\varepsilon\|_{H^1}\|y\varepsilon\|_2+o(b\|\varepsilon\|_{H^1}^2).
\end{align}

For $\left\langle i\frac{\partial H}{\partial \varepsilon},(g(\lambda y-w)-1)f(Q)\right\rangle$, we have
\begin{align*}
&\left\langle \frac{\partial H}{\partial \varepsilon},i(g(\lambda y-w)-1)f(Q)\right\rangle\\
=&\left\langle -\Delta \varepsilon+\varepsilon+2\epsilon_2b^2|y|^2\varepsilon-g(\lambda y-w)(f(Q+\varepsilon)-f(Q))+\lambda^2 V(\lambda y-w)\varepsilon,i(g(\lambda y-w)-1)f(Q)\right\rangle\\
=&(\nabla\varepsilon,i\lambda(\nabla g)(\lambda y-w)f(Q)+i(g(\lambda y-w)-1)df(Q)\nabla Q)_2+O(s^{-3}\|\varepsilon\|_{H^1})\\
=&O(s^{-3}\|\varepsilon\|_{H^1}).
\end{align*}
Therefore, we obtain
\begin{align}
\label{esti-3}
\left|\left\langle i\frac{\partial H}{\partial \varepsilon},(g(\lambda y-w)-1)f(Q)\right\rangle\right|\leq \epsilon_3b\|\varepsilon\|_{H^1}^2+\frac{C'}{\epsilon_3}s^{-5}.
\end{align}

For $\left\langle i\frac{\partial H}{\partial \varepsilon},\Psi\right\rangle$, we have
\begin{align*}
&\left\langle \frac{\partial H}{\partial \varepsilon},i\Psi\right\rangle\\
=&\left\langle -\Delta \varepsilon+\varepsilon+2\epsilon_2b^2|y|^2\varepsilon-g(\lambda y-w)(f(Q+\varepsilon)-f(Q))+\lambda^2 V(\lambda y-w)\varepsilon,i\Psi\right\rangle\\
=&O(s^{-3}\|\varepsilon\|_{H^1}).
\end{align*}
Therefore, we obtain
\begin{align}
\label{esti-4}
\left|\left\langle i\frac{\partial H}{\partial \varepsilon},\Psi\right\rangle\right|\leq \epsilon_3b\|\varepsilon\|_{H^1}^2+\frac{C'}{\epsilon_3}s^{-5}.
\end{align}

Next, we consider $\left\langle i\frac{\partial H}{\partial \varepsilon},\ModOp Q\right\rangle$. Firstly, for $\left\langle i\frac{\partial H}{\partial \varepsilon},i \Lambda Q\right\rangle$, we have
\begin{align*}
&\left\langle \frac{\partial H}{\partial \varepsilon},\Lambda Q\right\rangle\\
=&\left\langle L_+\re\varepsilon+iL_-\im\varepsilon+2\epsilon_2b^2|y|^2\varepsilon-\left(g(\lambda y-w)-1\right)df(Q)(\varepsilon)\right.\\
&\left.\hspace{20pt}-g(\lambda y-w)(f(Q+\varepsilon)-f(Q)-df(Q)(\varepsilon))+\lambda^2 V(\lambda y-w)\varepsilon,\Lambda Q\right\rangle\\
=&-2(\re\varepsilon,Q)_2+O(s^{-2}\|\varepsilon\|_{H^1})+O(s^{-3}\|\varepsilon\|_{H^1})+O(\|\varepsilon\|_{H^1}^2)+O(s^{-3}\|\varepsilon\|_{H^1}).
\end{align*}
Therefore, we obtain
\[
\left|\left\langle i\frac{\partial H}{\partial \varepsilon},i \Lambda Q\right\rangle\right|\lesssim \|\varepsilon\|_{H^1}^2+s^{-4}.
\]

Secondly, for $\left\langle i\frac{\partial H}{\partial \varepsilon},Q\right\rangle$, we have
\begin{align*}
&\left\langle \frac{\partial H}{\partial \varepsilon},iQ\right\rangle\\
=&\left\langle L_+\re\varepsilon+iL_-\im\varepsilon+2\epsilon_2b^2|y|^2\varepsilon-\left(g(\lambda y-w)-1\right)df(Q)(\varepsilon)\right.\\
&\left.\hspace{20pt}-g(\lambda y-w)(f(Q+\varepsilon)-f(Q)-df(Q)(\varepsilon))+\lambda^2 V(\lambda y-w)\varepsilon,iQ\right\rangle\\
=&O(s^{-2}\|\varepsilon\|_{H^1})+O(s^{-3}\|\varepsilon\|_{H^1})+O(\|\varepsilon\|_{H^1}^2)+O(s^{-3}\|\varepsilon\|_{H^1}).
\end{align*}
Therefore, we obtain
\[
\left|\left\langle i\frac{\partial H}{\partial \varepsilon},iQ\right\rangle\right|\lesssim \|\varepsilon\|_{H^1}^2+s^{-4}.
\]

Thirdly, for $\left\langle i\frac{\partial H}{\partial \varepsilon},|y|^2Q\right\rangle$, we have
\begin{align*}
&\left\langle \frac{\partial H}{\partial \varepsilon},i|y|^2Q\right\rangle\\
=&\left\langle L_+\re\varepsilon+iL_-\im\varepsilon+2\epsilon_2b^2|y|^2\varepsilon-\left(g(\lambda y-w)-1\right)df(Q)(\varepsilon)\right.\\
&\left.\hspace{20pt}-g(\lambda y-w)(f(Q+\varepsilon)-f(Q)-df(Q)(\varepsilon))+\lambda^2 V(\lambda y-w)\varepsilon,i|y|^2Q\right\rangle\\
=&-4(\im\varepsilon,\Lambda Q)_2+O(s^{-2}\|\varepsilon\|_{H^1})+O(s^{-3}\|\varepsilon\|_{H^1})+O(\|\varepsilon\|_{H^1}^2)+O(s^{-3}\|\varepsilon\|_{H^1}).
\end{align*}
Therefore, we obtain
\[
\left|\left\langle i\frac{\partial H}{\partial \varepsilon},|y|^2Q\right\rangle\right|\lesssim \|\varepsilon\|_{H^1}^2+s^{-4}.
\]

Fourthly, for $\left\langle i\frac{\partial H}{\partial \varepsilon},i\nabla Q\right\rangle$, we have
\begin{align*}
&\left\langle \frac{\partial H}{\partial \varepsilon},\frac{\partial Q}{\partial y_j}\right\rangle\\
=&\left\langle L_+\re\varepsilon+iL_-\im\varepsilon+2\epsilon_2b^2|y|^2\varepsilon-\left(g(\lambda y-w)-1\right)df(Q)(\varepsilon)\right.\\
&\left.\hspace{20pt}-g(\lambda y-w)(f(Q+\varepsilon)-f(Q)-df(Q)(\varepsilon))+\lambda^2 V(\lambda y-w)\varepsilon,\frac{\partial Q}{\partial y_j}\right\rangle\\
=&O(s^{-2}\|\varepsilon\|_{H^1})+O(s^{-3}\|\varepsilon\|_{H^1})+O(\|\varepsilon\|_{H^1}^2)+O(s^{-3}\|\varepsilon\|_{H^1}).
\end{align*}
Therefore, we obtain
\[
\left|\left\langle i\frac{\partial H}{\partial \varepsilon},i\nabla Q\right\rangle\right|\lesssim \|\varepsilon\|_{H^1}^2+s^{-4}.
\]

Fifthly, for $\left\langle i\frac{\partial H}{\partial \varepsilon},yQ\right\rangle$, we have
\begin{align*}
&\left\langle \frac{\partial H}{\partial \varepsilon},iy_jQ\right\rangle\\
=&\left\langle L_+\re\varepsilon+iL_-\im\varepsilon+2\epsilon_2b^2|y|^2\varepsilon-\left(g(\lambda y-w)-1\right)df(Q)(\varepsilon)\right.\\
&\left.\hspace{20pt}-g(\lambda y-w)(f(Q+\varepsilon)-f(Q)-df(Q)(\varepsilon))+\lambda^2 V(\lambda y-w)\varepsilon,iy_jQ\right\rangle\\
=&-\left(\im\varepsilon,\frac{\partial Q}{\partial y_j}\right)_2+O(s^{-2}\|\varepsilon\|_{H^1})+O(s^{-3}\|\varepsilon\|_{H^1})+O(\|\varepsilon\|_{H^1}^2)+O(s^{-3}\|\varepsilon\|_{H^1}).
\end{align*}
Therefore, we obtain
\[
\left|\left\langle i\frac{\partial H}{\partial \varepsilon},yQ\right\rangle\right|\lesssim \|\varepsilon\|_{H^1}^2+s^{-2}.
\]

Accordingly, we obtain
\begin{align}
\label{esti-5}
\left|\left\langle i\frac{\partial H}{\partial \varepsilon},\ModOp Q\right\rangle\right|&\lesssim s^{-3}\left(\|\varepsilon\|_{H^1}^2+s^{-4}\right)+s^{-2}\left(\|\varepsilon\|_{H^1}^2+s^{-4}\right)+s^{-3}\left(\|\varepsilon\|_{H^1}^2+s^{-2}\right).
\end{align}

Finally, we consider $\left\langle i\frac{\partial H}{\partial \varepsilon},\ModOp \varepsilon\right\rangle$. Firstly, for $\left\langle i\frac{\partial H}{\partial \varepsilon},i \Lambda \varepsilon\right\rangle$, we have
\begin{align*}
&\left\langle \frac{\partial H}{\partial \varepsilon},\Lambda \varepsilon\right\rangle\\
=&\left\langle -\Delta \varepsilon+\varepsilon+2\epsilon_2b^2|y|^2\varepsilon-g(\lambda y-w)(f(Q+\varepsilon)-f(Q))+\lambda^2 V(\lambda y-w)\varepsilon,\Lambda \varepsilon\right\rangle\\
=&O(\|\varepsilon\|_{H^1}^2)+O(b^2\|y\varepsilon\|_2^2)+O(\|\varepsilon\|_{H^1}^2).
\end{align*}
Therefore, we obtain
\[
\left|\left\langle i\frac{\partial H}{\partial \varepsilon},i \Lambda \varepsilon\right\rangle\right|\lesssim \|\varepsilon\|_{H^1}^2.
\]

Secondly, for $\left\langle i\frac{\partial H}{\partial \varepsilon},\varepsilon\right\rangle$, we have
\begin{align*}
&\left\langle \frac{\partial H}{\partial \varepsilon},i\varepsilon\right\rangle\\
=&\left\langle -\Delta \varepsilon+\varepsilon+2\epsilon_2b^2|y|^2\varepsilon-g(\lambda y-w)(f(Q+\varepsilon)-f(Q))+\lambda^2 V(\lambda y-w)\varepsilon,i\varepsilon\right\rangle\\
=&-\left(g(\lambda y-w)\left(|Q+\varepsilon|^\frac{4}{N}-Q^\frac{4}{N}\right)Q,i\varepsilon\right)_2=O(\|\varepsilon\|_{H^1}^2).
\end{align*}
Therefore, we obtain
\[
\left|\left\langle i\frac{\partial H}{\partial \varepsilon},\varepsilon\right\rangle\right|\lesssim \|\varepsilon\|_{H^1}^2.
\]

Thirdly, for $\left\langle i\frac{\partial H}{\partial \varepsilon},|y|^2\varepsilon\right\rangle$, we have
\begin{align*}
&\left\langle \frac{\partial H}{\partial \varepsilon},i|y|^2\varepsilon\right\rangle\\
=&\left\langle -\Delta \varepsilon+\varepsilon+2\epsilon_2b^2|y|^2\varepsilon-g(\lambda y-w)(f(Q+\varepsilon)-f(Q))+\lambda^2 V(\lambda y-w)\varepsilon,i|y|^2\varepsilon\right\rangle\\
=&(\nabla\varepsilon,iy\varepsilon)_2-\left(g(\lambda y-w)\left(|Q+\varepsilon|^\frac{4}{N}-Q^\frac{4}{N}\right)Q,i|y|^2\varepsilon\right)_2=O(\|\varepsilon\|_{H^1}\|y\varepsilon\|_2).
\end{align*}
Therefore, we obtain
\[
\left|\left\langle i\frac{\partial H}{\partial \varepsilon},|y|^2\varepsilon\right\rangle\right|\lesssim s\left(\|\varepsilon\|_{H^1}^2+b^2\|y\varepsilon\|_2^2\right).
\]

Fourthly, for $\left\langle i\frac{\partial H}{\partial \varepsilon},i\nabla\varepsilon\right\rangle$, we have
\begin{align*}
&\left\langle \frac{\partial H}{\partial \varepsilon},\nabla\varepsilon\right\rangle\\
=&\left\langle -\Delta \varepsilon+\varepsilon+2\epsilon_2b^2|y|^2\varepsilon-g(\lambda y-w)(f(Q+\varepsilon)-f(Q))+\lambda^2 V(\lambda y-w)\varepsilon,\nabla\varepsilon\right\rangle\\
=&-4\epsilon_2b^2(y\varepsilon,\varepsilon)_2+\left(-g(\lambda y-w)(f(Q+\varepsilon)-f(Q))+\lambda^2 V(\lambda y-w)\varepsilon,\nabla\varepsilon\right)_2=O(\|\varepsilon\|_{H^1}^2+b^2\|y\varepsilon\|_2^2).
\end{align*}
Therefore, we obtain
\[
\left|\left\langle i\frac{\partial H}{\partial \varepsilon},i\nabla \varepsilon\right\rangle\right|\lesssim \|\varepsilon\|_{H^1}^2+b^2\|y\varepsilon\|_2^2.
\]

Fifthly, for $\left\langle i\frac{\partial H}{\partial \varepsilon},y\varepsilon\right\rangle$, we have
\begin{align*}
&\left\langle \frac{\partial H}{\partial \varepsilon},iy_j\varepsilon\right\rangle\\
=&\left\langle -\Delta \varepsilon+\varepsilon+2\epsilon_2b^2|y|^2\varepsilon-g(\lambda y-w)(f(Q+\varepsilon)-f(Q))+\lambda^2 V(\lambda y-w)\varepsilon,iy_j\varepsilon\right\rangle\\
=&\left(\frac{\partial\varepsilon}{\partial y_j},i\varepsilon\right)_2-\left(g(\lambda y-w)\left(|Q+\varepsilon|^\frac{4}{N}-Q^\frac{4}{N}\right)Q,iy_j\varepsilon\right)_2=O\left(\|\varepsilon\|_{H^1}^2\right).
\end{align*}
Therefore, we obtain
\[
\left|\left\langle i\frac{\partial H}{\partial \varepsilon},y\varepsilon\right\rangle\right|\lesssim \|\varepsilon\|_{H^1}^2.
\]

Accordingly, we obtain
\begin{align}
\label{esti-6}
\left|\left\langle i\frac{\partial H}{\partial \varepsilon},\ModOp \varepsilon\right\rangle\right|&\lesssim s^{-3}\left(\|\varepsilon\|_{H^1}^2+b^2\|y\varepsilon\|_2^2\right)+s^{-2}\left(\|\varepsilon\|_{H^1}^2+b^2\|y\varepsilon\|_2^2\right).
\end{align}

Combining inequalities \eqref{esti-1}, \eqref{esti-2}, \eqref{esti-3}, \eqref{esti-4}, \eqref{esti-5}, and \eqref{esti-6}, we obtain
\begin{align*}
&\frac{d}{ds}H(s,\varepsilon(s))=\frac{\partial H}{\partial s}(s,\varepsilon(s))+\left\langle i\frac{\partial H}{\partial \varepsilon},i\frac{\partial \varepsilon}{\partial s}\right\rangle\\
\geq&-2(1+\epsilon)\epsilon_2b^3\|y\varepsilon\|_2^2+o\left(b\left(\|\varepsilon\|_{H^1}^2+b^2\|y\varepsilon\|_2^2\right)\right)-4\epsilon_2b^2\|\varepsilon\|_{H^1}\|y\varepsilon\|_2+o(b\|\varepsilon\|_{H^1}^2)\\
&\hspace{20pt}-2\epsilon_3b\|\varepsilon\|_{H^1}^2-\frac{C'}{\epsilon_3}s^{-5}+o\left(b\left(\|\varepsilon\|_{H^1}^2+b^2\|y\varepsilon\|_2^2\right)\right)-C''s^{-5}\\
\geq&-b\left(2\left(\frac{\epsilon_2}{\epsilon_1}+\epsilon_3+\epsilon\right)\|\varepsilon\|_{H^1}^2+2(1+\epsilon_1+\epsilon)\epsilon_2b^2\|y\varepsilon\|_2^2-\frac{C'}{\epsilon_3}s^{-4}\right)\\
\geq&-b\left(\frac{4\epsilon_2}{\epsilon_1}\|\varepsilon\|_{H^1}^2+\left(\frac{m}{2}+1+\epsilon_1\right)\epsilon_2b^2\|y\varepsilon\|_2^2-C's^{-4}\right),
\end{align*}
so that we obtain Lemma \ref{Hdef}.
\end{proof}

\begin{lemma}[Derivative of $S$ in time]
\label{Sdef}
For all $s\in(s_*,s_1]$,
\[
\frac{d}{ds}S(s,\varepsilon(s))\gtrsim \frac{b}{\lambda^m}\left(\|\varepsilon\|_{H^1}^2+b^2\left\||y|\varepsilon\right\|_2^2-s^{-(2L+\kappa)}\right).
\]
\end{lemma}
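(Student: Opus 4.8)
The plan is to differentiate the product $S=\lambda^{-m}H$ directly and then feed in the three ingredients already assembled in this section: the coercivity of $H$ (Lemma \ref{Hcoer}), its upper bound (Corollary \ref{Sesti}), and the lower bound on $\frac{d}{ds}H$ (Lemma \ref{Hdef}). First I would write
\[
\frac{d}{ds}S(s,\varepsilon(s))=-\frac{m}{\lambda^m}\left(\frac{1}{\lambda}\frac{\partial\lambda}{\partial s}\right)H(s,\varepsilon(s))+\frac{1}{\lambda^m}\frac{d}{ds}H(s,\varepsilon(s)).
\]
The modulation estimate \eqref{modesti} gives $\frac{1}{\lambda}\frac{\partial\lambda}{\partial s}=-b+O(s^{-3})$, so the first term becomes $\frac{mb}{\lambda^m}H+O\!\left(s^{-3}\lambda^{-m}H\right)$. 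The leading piece is the source of positivity: by Lemma \ref{Hcoer},
\[
\frac{mb}{\lambda^m}H\geq\frac{b}{\lambda^m}\left(\frac{m\mu}{4}\|\varepsilon\|_{H^1}^2+m\epsilon_2 b^2\||y|\varepsilon\|_2^2\right),
\]
while the correction $O(s^{-3}\lambda^{-m}H)$ is controlled via the upper bound of Corollary \ref{Sesti} and, since $s^{-3}\ll b\approx s^{-1}$, absorbed into the main term once $s_0$ is taken large.

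Next I would insert Lemma \ref{Hdef} for the second term, which contributes
\[
-\frac{b}{\lambda^m}\left(\frac{4\epsilon_2}{\epsilon_1}\|\varepsilon\|_{H^1}^2+\left(\frac{m}{2}+1+\epsilon_1\right)\epsilon_2 b^2\||y|\varepsilon\|_2^2+C's^{-4}\right).
\]
Collecting the $\|\varepsilon\|_{H^1}^2$ terms yields the coefficient $\frac{m\mu}{4}-\frac{4\epsilon_2}{\epsilon_1}$, which is a fixed positive constant precisely because $\epsilon_2<\frac{m\mu\epsilon_1}{16}$; collecting the $b^2\||y|\varepsilon\|_2^2$ terms yields $\epsilon_2\!\left(\frac{m}{2}-1-\epsilon_1\right)$, positive precisely because $1+\epsilon_1<\frac{m}{2}$. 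This coefficient bookkeeping is the crux of the argument: the conditions on $m,\epsilon_1,\epsilon_2$ imposed at the start of Section \ref{sec:MEF} are exactly what keep both coefficients bounded below by positive constants, so that the quadratic part of $\frac{d}{ds}S$ dominates with the correct sign.

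Finally I would dispose of the error term $-\frac{b}{\lambda^m}C's^{-4}$. Since $2L+\kappa=\frac{3L}{2}+1=\frac{13}{4}+\frac{3}{2K}<4$ for $K$ large, we have $s^{-4}\leq s^{-(2L+\kappa)}$, so this term is bounded below by a constant multiple of $-\frac{b}{\lambda^m}s^{-(2L+\kappa)}$. Assembling the three pieces then gives the claimed lower bound. I expect the only delicate point to be the constant bookkeeping in the two coefficients above; every other step is a direct substitution of the preceding lemmas together with the equivalences $\frac{1}{\lambda}\frac{\partial\lambda}{\partial s}\approx-b$, $\lambda\approx b\approx s^{-1}$ valid on $(s_*,s_1]$.
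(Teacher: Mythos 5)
Your proposal is correct and follows essentially the same route as the paper's proof: differentiate $S=\lambda^{-m}H$, split off $-b$ from $\frac{1}{\lambda}\frac{\partial\lambda}{\partial s}$ using \eqref{modesti}, bound the leading term $\frac{mb}{\lambda^m}H$ from below by Lemma \ref{Hcoer}, insert Lemma \ref{Hdef} for $\frac{1}{\lambda^m}\frac{d}{ds}H$, and conclude from the positivity of the coefficients $\frac{m\mu}{4}-\frac{4\epsilon_2}{\epsilon_1}$ and $\epsilon_2\left(\frac{m}{2}-1-\epsilon_1\right)$ together with $s^{-4}\leq s^{-(2L+\kappa)}$. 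The coefficient bookkeeping and the role of the constraints on $m$, $\epsilon_1$, $\epsilon_2$ are exactly as in the paper.
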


\begin{proof}
According to \eqref{modesti}, Lemma \ref{Lambda}, and Lemma \ref{Hdef}, we have
\begin{align*}
&\frac{d}{ds}S(s,\varepsilon(s))=m\frac{b}{\lambda^m}H(s,\varepsilon(s))-m\frac{1}{\lambda^m}\left(\frac{1}{\lambda}\frac{\partial \lambda}{\partial s}+b\right)H(s,\varepsilon(s))+\frac{1}{\lambda^m}\frac{d}{ds}H(s,\varepsilon(s))\\
\geq& \frac{b}{\lambda^m}\left(\left(\frac{m\mu}{4}-\frac{4\epsilon_2}{\epsilon_1}\right)\|\varepsilon\|_{H^1}^2+\left(\frac{m}{2}-(1+\epsilon_1)\right)\epsilon_2b^2\left\||y|\varepsilon\right\|_2^2-C's^{-4}\right).
\end{align*}
Therefore, we obtain Lemma \ref{Sdef}.
\end{proof}

\section{Bootstrap}
\label{sec:bootstrap}
In this section, we establish the estimates of the decomposition parameters by using a bootstrap argument and the estimates obtained in Section \ref{sec:MEF}.

\begin{lemma}
\label{rebootstrap}
There exists a sufficiently small $\epsilon_3>0$ such that for all $s\in(s_*,s_1]$, 
\begin{align}
\label{reepsiesti}
\left\|\varepsilon(s)\right\|_{H^1}^2+b(s)^2\left\||y|\varepsilon(s)\right\|_2^2&\lesssim s^{-(2L+\kappa)},\\
\label{relamesti}
\left|s\lambda(s)-1\right|&<(1-\epsilon_3)s^{-M},\\
\label{rebesti}
\left|sb(s)-1\right|&<(1-\epsilon_3)s^{-M},\\
\label{rewesti}
\left|w(s)\right|&\lesssim s^{-2}.
\end{align}
\end{lemma}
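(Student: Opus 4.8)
The plan is to run the bootstrap on the interval $(s_*,s_1]$, where \eqref{bootstrap} is assumed to hold, and to upgrade each of the four quantities to the strictly sharper bounds \eqref{reepsiesti}--\eqref{rewesti}. The crucial input is that the initial data \eqref{initial} is the exact profile at $s=s_1$, so that $\varepsilon(s_1)=0$, $\lambda(s_1)=b(s_1)=s_1^{-1}$, and $w(s_1)=0$; accordingly, every estimate will be produced by integrating a differential (in)equality backward from $s_1$, with the integrand controlled through \eqref{bootstrap} via $b\approx\lambda\approx s^{-1}$. The four bounds are essentially decoupled, so I would treat $\varepsilon$ first, then $b$, then $\lambda$ (by comparison with $b$), and finally $w$.

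For \eqref{reepsiesti} I would exploit the monotonicity of the modified energy. Since $\varepsilon(s_1)=0$ forces $S(s_1,\varepsilon(s_1))=0$, integrating the lower bound of Lemma \ref{Sdef} from $s$ to $s_1$ and discarding the nonnegative contribution of $\|\varepsilon\|_{H^1}^2+b^2\||y|\varepsilon\|_2^2$ gives
\[
S(s,\varepsilon(s))\lesssim\int_s^{s_1}\frac{b}{\lambda^m}\sigma^{-(2L+\kappa)}\,d\sigma.
\]
Using $b\approx\lambda\approx\sigma^{-1}$ from \eqref{bootstrap}, the integrand is comparable to $\sigma^{m-1-(2L+\kappa)}$; since $m<2L$ and $\kappa>0$ the exponent lies below $-1$, so the integral is bounded by $s^{m-(2L+\kappa)}$. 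The coercive lower bound of Corollary \ref{Sesti} then converts this into $\|\varepsilon(s)\|_{H^1}^2+b^2\||y|\varepsilon(s)\|_2^2\lesssim\lambda^m s^{m-(2L+\kappa)}\lesssim s^{-(2L+\kappa)}$, a genuine gain of $s^{-\kappa}$ over \eqref{bootstrap}.

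For \eqref{rebesti} I would work with $b^{-1}$. From \eqref{defbesti} and $b\approx s^{-1}$ one gets $\frac{d}{ds}(b^{-1}-s)=-b^{-2}(\partial_s b+b^2)=O(s^{2-2L})$; since $b(s_1)^{-1}-s_1=0$, integrating from $s_1$ yields $b(s)^{-1}-s=O(s^{3-2L})$ and hence $|sb(s)-1|\lesssim s^{2-2L}$. Because $M<2(L-1)$ we have $2-2L<-M$, so after retaking $s_0$ large the implicit constant is absorbed into $(1-\epsilon_3)$ and \eqref{rebesti} follows. For \eqref{relamesti} I would compare $\lambda$ with $b$: combining \eqref{predefbesti} and \eqref{modesti} gives $|\frac{1}{\lambda}\partial_s\lambda+b|\lesssim s^{-2L}$, whence $\frac{d}{ds}\log(\lambda/b)=O(s^{1-2L})$; integrating from $s_1$, where $\lambda(s_1)/b(s_1)=1$, gives $\lambda(s)/b(s)=1+O(s^{2-2L})$, so $|s\lambda(s)-1|\lesssim s^{2-2L}$ and \eqref{relamesti} follows by the same comparison with $s^{-M}$. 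Finally, \eqref{rewesti} is the most direct: \eqref{modesti} provides $|\partial_s w|\lesssim s^{-3}$, and integrating from $s_1$ with $w(s_1)=0$ produces $|w(s)|\lesssim s^{-2}$.

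The main obstacle is the $\varepsilon$-estimate \eqref{reepsiesti}: it is the only bound not obtainable by elementary ODE integration, resting instead on the coercivity of $H$ (Lemma \ref{Hcoer} and Corollary \ref{Sesti}) together with the favorable sign in the monotonicity formula of Lemma \ref{Sdef}, both of which encode the cancellations in $L_+,L_-$ and the calibrated choice of $m,\epsilon_1,\epsilon_2,\kappa$. The remaining bounds are comparatively routine; the only care needed is to verify that each improvement is \emph{strict} (the exponent $2-2L<-M$ for $b$ and $\lambda$, the rate $s^{-2}$ against $s^{-3/2}$ for $w$, and the factor $s^{-\kappa}$ for $\varepsilon$), so that the implicit constants can be absorbed by enlarging $s_0$. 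This strictness is precisely what allows the bootstrap to close in the continuity argument that follows.
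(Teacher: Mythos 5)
Your proposal is correct, and all four bounds rest on the same key inputs as the paper (Corollary \ref{Sesti}, Lemma \ref{Sdef}, and the modulation estimates \eqref{modesti}, \eqref{defbesti}, \eqref{predefbesti}), but the way you exploit the monotonicity formula for the main estimate \eqref{reepsiesti} is genuinely different. The paper never integrates $\frac{d}{ds}S$: it argues by contradiction, introducing the first time $s_\dagger$ at which $\|\varepsilon\|_{H^1}^2+b^2\||y|\varepsilon\|_2^2$ reaches a large multiple $C_\dagger\tau^{-(2L+\kappa)}$ of the target bound, and the last time $s_\ddagger\in[s_\dagger,s_1)$ up to which this quantity stays above $\tau^{-(2L+\kappa)}$; on $[s_\dagger,s_\ddagger]$ the right-hand side of Lemma \ref{Sdef} is nonnegative, so $S$ is monotone increasing there, and comparing $\lambda^m S$ at the two endpoints yields $C_1C_\dagger\leq 2C_2$, a contradiction for $C_\dagger$ large. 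You instead integrate the differential inequality backward from $s_1$, using that the decomposition of the exact-profile data \eqref{initial} is, by uniqueness in Lemma \ref{decomposition}, given by $\varepsilon(s_1)=0$, $\lambda(s_1)=b(s_1)={s_1}^{-1}$, $w(s_1)=0$, hence $S(s_1,\varepsilon(s_1))=0$; discarding the nonnegative term and using $b\approx\lambda\approx\sigma^{-1}$ then gives $S(s,\varepsilon(s))\lesssim s^{m-(2L+\kappa)}$ and Corollary \ref{Sesti} closes the estimate. Your version is shorter and avoids the two auxiliary times; the paper's version buys a little robustness, since it only needs the endpoint value to lie strictly below the threshold ${s_1}^{-(2L+\kappa)}$ rather than to vanish identically (the paper, too, uses $\varepsilon(s_1)=0$, but only to guarantee $s_\ddagger<s_1$). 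For the parameter estimates your order is reversed relative to the paper: you bound $sb$ first, directly from \eqref{defbesti}, and transfer to $\lambda$ through $\log(\lambda/b)$, obtaining the decay $s^{-2(L-1)}$ for both and closing strictly via $M<2(L-1)$; the paper bounds $s\lambda$ first, inserting the bootstrap bound \eqref{bootstrap} on $sb$ into $\frac{d}{ds}(s\lambda)$ and closing strictly via $\frac{1+\epsilon}{M}<1$ (i.e. $M>1$), and then compares $b$ with $\lambda$. Both orderings are valid precisely because the hypotheses provide $1<M<2(L-1)$, and the $w$ bound is identical in both proofs.
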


\begin{proof}

Let $C_\dagger$ be sufficiently large and $s_\dagger$ defined by
\[
s_\dagger:=\inf\left\{\ \sigma\in[s_*,s_1]\ \middle|\ \|\varepsilon(\tau)\|_{H^1}^2+b(\tau)^2\|y\varepsilon(\tau)\|_2^2\leq C_\dagger \tau^{-(2L+\kappa)}\quad(\tau\in[\sigma,s_1])\right\}.
\]
Assume $s_\dagger>s_*$. Then
\[
\|\varepsilon(s_\dagger)\|_{H^1}^2+b(s_\dagger)^2\|y\varepsilon(s_\dagger)\|_2^2=C_\dagger s_\dagger^{-(2L+\kappa)}
\]
holds. Moreover, let $s_\ddagger$ defined by
\[
s_\ddagger:=\sup\left\{\ \sigma\in[s_\dagger,s_1]\ \middle|\ \|\varepsilon(\tau)\|_{H^1}^2+b(\tau)^2\|y\varepsilon(\tau)\|_2^2\geq \tau^{-(2L+\kappa)}\quad(\tau\in[s_\dagger,\sigma])\right\}.
\]
Then, since $s_\ddagger<s_1$,
\[
\|\varepsilon(s_\ddagger)\|_{H^1}^2+b(s_\ddagger)^2\|y\varepsilon(s_\ddagger)\|_2^2=s_\ddagger^{-(2L+\kappa)}
\]
holds.

From Corollary \ref{Sesti} and Lemma \ref{Sdef},
\begin{align*}
\frac{C_1}{\lambda^m}\left(\frac{\mu}{4}\|\varepsilon\|_{H^1}^2+b^2\left\||y|\varepsilon\right\|_2^2\right)&\leq S(s,\varepsilon)\leq\frac{C_2}{\lambda^m}\left(\|\varepsilon\|_{H^1}^2+b^2\left\||y|\varepsilon\right\|_2^2\right),\\
\frac{C_3b}{\lambda^m}\left(\|\varepsilon\|_{H^1}^2+b^2\left\||y|\varepsilon\right\|_2^2-s^{-(2L+\kappa)}\right)&\leq \frac{d}{ds}S(s,\varepsilon(s))
\end{align*}
hold. Then $s\mapsto S(s,\varepsilon(s))$ is monotonically increasing on $[s_\dagger,s_\ddagger]$. Therefore, we obtain
\begin{align*}
C_1C_\dagger s_\dagger^{-(2L+\kappa)}&=C_1\left(\|\varepsilon(s_\dagger)\|_{H^1}^2+b(s_\dagger)^2\|y\varepsilon(s_\dagger)\|_2^2\right)\\
&\leq \lambda(s_\dagger)^mS(s_\dagger,\varepsilon(s_\dagger))\\
&\leq \lambda(s_\dagger)^mS(s_\ddagger,\varepsilon(s_\ddagger))\\
&\leq C_2\frac{\lambda(s_\dagger)^m}{\lambda(s_\ddagger)^m}\left(\|\varepsilon(s_\ddagger)\|_{H^1}^2+b(s_\ddagger)^2\left\||y|\varepsilon(s_\ddagger)\right\|_2^2\right)\\
&\leq C_2\frac{\lambda(s_\dagger)^m}{\lambda(s_\ddagger)^m}s_\ddagger^{-(2L+\kappa)}\\
&\leq 2C_2\frac{s_\ddagger^{-(2L+\kappa-m)}}{s_\dagger^{-(2L+\kappa-m)}}s_\dagger^{-(2L+\kappa)}.
\end{align*}
Accordingly, we obtain
\[
C_1C_\dagger\leq 2C_2.
\]
It is a contradiction since $C_\dagger$ is sufficiently large.

We prove \eqref{relamesti}. Since
\[
\left|\frac{d}{ds}\left(s\lambda\right)\right|\leq s^{-1}(1+\epsilon)\left(s^{-M}+C s^{-(2L-1)}\right)\leq (1+\epsilon)s^{-(M+1)}
\]
and $\lambda(s_1)={s_1}^{-1}$, we have
\[
\left|s\lambda-1\right|\leq\int_s^{s_1}(1+\epsilon)\sigma^{-(M+1)}d\sigma\leq\frac{1+\epsilon}{M}s^{-M}.
\]
Therefore, \eqref{relamesti} holds since $M>1$. Next, we prove \eqref{rebesti}. Since
\[
\left|\frac{b}{\lambda}-1\right|\lesssim \int_s^{s_1}\sigma^{-(2L-1)}d\sigma\lesssim s^{-2(L-1)},
\]
we have
\[
\left|sb-s\lambda\right|\lesssim s^{-2(L-1)}.
\]
Consequently, we have
\[
\left|sb-1\right|\leq \left|sb-s\lambda\right|+\left|s\lambda-1\right|\leq s^{-2(L-1)}+\frac{1+\epsilon}{M}s^{-M}.
\]
Therefore, \eqref{rebesti} holds. Finally, since
\[
\left|w(s)\right|\leq \int_s^{s_1}|\Mod(\sigma)|d\sigma\lesssim \int_s^{s_1}\sigma^{-3}d\sigma\lesssim s^{-2},
\]
we obtain \eqref{rewesti}.
\end{proof}

From Lemma \ref{rebootstrap} and the definition of $s_*$, we obtain the following:

\begin{corollary}
\label{reesti}
If $s_0$ is sufficiently large, then $s_*=s'$.
\end{corollary}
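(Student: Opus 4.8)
The plan is to close the bootstrap by the standard open–closed (continuity) argument, exploiting the fact that the conclusions of Lemma \ref{rebootstrap} are \emph{strict} quantitative improvements of the bootstrap assumptions \eqref{bootstrap}. Since $s_*\geq s'$ by construction, I would argue by contradiction and suppose $s_*>s'$.

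First I would record that, by the definition of $s_*$ as an infimum, the conditions \eqref{bootstrap} hold on the open interval $(s_*,s_1]$: for any $s>s_*$ there is an admissible $\sigma\in(s_*,s)$ with \eqref{bootstrap} valid on $[\sigma,s_1]\ni s$. (That the defining set is nonempty and $s_*<s_1$ follows from the right endpoint, where the data is exactly the profile, so $\varepsilon(s_1)=0$, $w(s_1)=0$, and $s_1\lambda(s_1)=s_1b(s_1)=1$, making \eqref{bootstrap} hold strictly there.) Because $(s',s_1]\subseteq J_{s_1}$, the decomposition parameters are defined and $C^1$ on this range, so Lemma \ref{rebootstrap} applies on $(s_*,s_1]$ and yields the improved bounds \eqref{reepsiesti}–\eqref{rewesti}.

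The key step is to pass to the endpoint $s=s_*$ and observe that the improvement is strict once $s_0$ is large. Since $s_*$ lies in the interior of $J_{s_1}$ (as $s_*>s'\geq\inf J_{s_1}$), all parameters are continuous in a two-sided neighbourhood of $s_*$; letting $s\searrow s_*$ in \eqref{reepsiesti}–\eqref{rewesti} gives the same non-strict bounds at $s_*$. Comparing these with \eqref{bootstrap} at $s_*\geq s_0$: because $\kappa=\frac{2-L}{2}>0$, one has $Cs_*^{-(2L+\kappa)}<s_*^{-2L}$ as soon as $Cs_*^{-\kappa}<1$; the factor $(1-\epsilon_3)$ makes the $\lambda$- and $b$-inequalities strict; and $|w(s_*)|\lesssim s_*^{-2}<s_*^{-3/2}$ once $Cs_*^{-1/2}<1$. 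Hence, choosing $s_0$ large enough, all four conditions in \eqref{bootstrap} hold with \emph{strict} inequality at $s=s_*$.

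Finally, strictness at $s_*$ together with continuity of the parameters across $s_*$ forces \eqref{bootstrap} to persist on $[s_*-\delta,s_1]$ for some $\delta>0$ with $s_*-\delta>s'$, contradicting the minimality of $s_*$; therefore $s_*=s'$. I do not expect a genuine obstacle here, since the substantive work is already in Lemma \ref{rebootstrap}; the only points requiring care are (i) checking that each improved bound strictly beats its bootstrap counterpart for large $s_0$ — precisely why $\kappa>0$, $\epsilon_3>0$, and the $s^{-2}$ (rather than $s^{-3/2}$) control of $w$ were arranged — and (ii) the routine continuity bookkeeping needed to extend the strict inequalities below $s_*$.
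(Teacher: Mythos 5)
Your proof is correct and is precisely the argument the paper intends: the paper derives Corollary \ref{reesti} directly ``from Lemma \ref{rebootstrap} and the definition of $s_*$,'' i.e.\ the strict improvement ($\kappa>0$, the factor $(1-\epsilon_3)$, and $s^{-2}$ versus $s^{-3/2}$) combined with continuity of the parameters forces the infimum to be attained at $s'$. Your write-up merely makes explicit the open--closed bookkeeping (nonemptiness at $s_1$, interiority of $s_*$ in $J_{s_1}$, and the choice of $s_0$ beating the implicit constants) that the paper leaves to the reader.
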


\begin{lemma}
\label{s'=s_0}
If $s_0$ is sufficiently large, then $s'=s_0$.
\end{lemma}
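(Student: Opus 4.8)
The plan is to argue by contradiction: suppose $s'>s_0$, so that $s'=\inf J_{s_1}$ by the definition $s'=\max\{s_0,\inf J_{s_1}\}$. By Corollary \ref{reesti} we have $s_*=s'$, hence the bootstrap \eqref{bootstrap} holds on $(s',s_1]$ and, more importantly, the strictly improved estimates \eqref{reepsiesti}--\eqref{rewesti} of Lemma \ref{rebootstrap} are available there. In particular $u$ exists on all of $(s',s_1]$ since the decomposition \eqref{mod}--\eqref{orthocondi} does. The goal is to show that this decomposition extends to rescaled times strictly below $s'$, contradicting the maximality of $I_{t_1}$ (equivalently, of $J_{s_1}$).

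First I would rule out backward blow-up of $u$. The decomposition can terminate at $s'$ only if either the solution $u$ ceases to exist or it leaves the $\delta$-neighbourhood of the soliton manifold appearing in Lemma \ref{decomposition}. For the former, I use the improved estimates to bound $u$ in $H^1$. Writing $u(t,x)=\lambda^{-N/2}(Q+\varepsilon)(s,y)e^{-ib|y|^2/4+i\gamma}$ and differentiating in $x$, one gets $\|\nabla u\|_2\lesssim\lambda^{-1}\left(\|\nabla(Q+\varepsilon)\|_2+b\||y|(Q+\varepsilon)\|_2\right)$; since $\lambda\approx s^{-1}$, $\|\varepsilon\|_{H^1}$ is small, $b\||y|\varepsilon\|_2\lesssim s^{-(L+\kappa/2)}$ by \eqref{reepsiesti}, and $b\||y|Q\|_2\approx s^{-1}$, this yields $\|\nabla u(t)\|_2\lesssim s\leq s_1$ uniformly on $(s',s_1]$. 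Because $dt/ds=\lambda^2\approx s^{-2}$ integrates to a finite time $t(s')\approx -1/s'$, the blow-up alternative forbids $t(s')$ from being the backward maximal existence time, so $u$ exists and is $H^1$-continuous on a full neighbourhood of $t(s')$.

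Next I would extend the decomposition past $s'$. The modulation estimate \eqref{modesti} together with $\lambda\approx b\approx s^{-1}$ shows that the parameter derivatives $\partial_s\lambda$, $\partial_s b$, $\partial_s w$, and $\partial_s\gamma$ are bounded on the finite interval $(s',s_1]$, so $\lambda$, $b$, $\gamma$, $w$, and $\varepsilon$ extend continuously to $s'$; by \eqref{reepsiesti} the limit satisfies $\|\varepsilon(s')\|_{H^1}\lesssim (s')^{-(L+\kappa/2)}<\delta$ once $s_0$ (hence $s'$) is large enough. Using these limiting parameters as reference functions and the $H^1$-continuity of $u$ near $t(s')$ established above, the hypothesis of Lemma \ref{decomposition} holds on a neighbourhood of $t(s')$, which produces the decomposition at rescaled times below $s'$. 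This contradicts $s'=\inf J_{s_1}$, and therefore $s'=s_0$.

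I expect the main obstacle to be the bookkeeping that cleanly separates the two possible causes for the decomposition to stop --- loss of existence of $u$ versus loss of proximity to the soliton manifold --- and in particular the verification of the uniform $H^1$ bound on $u$ from the rescaled quantities, so that the backward blow-up alternative can legitimately be invoked. Once the solution is known to persist past $t(s')$, the boundedness of the modulation parameters and the smallness in \eqref{reepsiesti} make the final application of Lemma \ref{decomposition} routine.
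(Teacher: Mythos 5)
Your proposal is correct and takes essentially the same approach as the paper: the paper organizes the contradiction as three cases ($t'>T_*$, $t'=T_*>-\infty$, $t'=-\infty$), eliminating the last two because backward blow-up would force $\lambda(s)\to 0$ and $t'=-\infty$ would force $\lambda(s_n)\to\infty$, both contradicting $\left|s\lambda(s)-1\right|<s^{-M}$ --- exactly the control you use to bound $\|\nabla u\|_2\lesssim s$ and keep $t(s')$ finite, so your argument is the contrapositive packaging of the same facts. The concluding step is also identical in substance: both extend the decomposition past $t(s')$ by applying Lemma \ref{decomposition} with frozen (the paper) or limiting (your version) reference parameters together with the $H^1$-continuity of $u$, contradicting the maximality of $I_{t_1}$.
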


\begin{proof}
We prove $s'\leq s_0$ by contradiction. Assume that for any $s_0\gg1$, there exists $s_1>s_0$ such that $s'>s_0$. In the following, we consider the initial value \eqref{initial} in response to such $s_1$ and the corresponding solution $u$ for \eqref{NLS}.

Let $t':=\inf I_{t_1}$. Then $s'=\inf J_{s_1}>s_0$ holds. Furthermore, we have
\[
\left\|\lambda(s)^\frac{N}{2}u(s,\lambda(s)y-w(s))e^{-i\gamma(s)}-Q(y)\right\|_{H^1}=\left\|\varepsilon(s)\right\|_{H^1}\leq\frac{\delta}{4}
\]
for all $s\in (s',s_1]$. Since $t_{t_1}((s',s_1])=(t',t_1]$, we have
\[
\left\|\tilde{\lambda}(t)^\frac{N}{2}u(t,\tilde{\lambda}(t)y-\tilde{w}(t))e^{-i\tilde{\gamma}(t)}-Q(y)\right\|_{H^1}\leq\frac{\delta}{4}
\]
for all $t\in (t',t_1]$. We consider three cases $t'>T_*$, $t'=T_*>-\infty$, and $t'=-\infty$.

Firstly, assume $t'>T_*$. Then $\lambda$ and $\tilde{\lambda}$ are bounded on $(s',s_1]$ and $(t',t_1]$, respectively, according to \eqref{bootstrap} and Corollary \ref{reesti}. Then, by setting $t$ sufficiently close to $t'$, we have
\[
\left\|\tilde{\lambda}(t)^\frac{N}{2}u(t',\tilde{\lambda}(t)y-\tilde{w}(t))e^{-i\tilde{\gamma}(t)}-Q(y)\right\|_{H^1}<\delta.
\]
Therefore, there exists the decomposition of $u$ in a neighbourhood of $t'$ according to Lemma \ref{decomposition}. Its existence contradicts the maximality of $I_{t_1}$.

Next, assume $t'=T_*>-\infty$. Then $\|\nabla u(t)\|_2\rightarrow \infty\ (t\searrow t')$ holds according to the blow-up alternative. Also, $\|\nabla u(s)\|_2\rightarrow \infty\ (s\searrow s')$ holds. Then since
\[
\|u(s)\|_2+\lambda(s)\|\nabla u(s)\|_2\lesssim 1,
\]
we have $\lambda(s)\rightarrow 0\ (s\searrow s')$. Therefore, we obtain
\[
\left|s\lambda(s)-1\right|\rightarrow 1,\quad s^{-M}\rightarrow {s'}^{-M}<\frac{1}{2}\quad (s\searrow s'),
\]
which contradicts \eqref{relamesti}.

Finally, assume $t'=-\infty$. Then there exists a sequence $(s_n)_{n\in\mathbb{N}}$ that converges to $s'$ such that $\lim_{n\rightarrow\infty}\lambda(s_n)=\infty$ holds. Therefore, we obtain
\[
\left|s_n\lambda(s_n)-1\right|\rightarrow \infty,\quad {s_n}^{-M}\rightarrow {s'}^{-M}<1\quad (n\rightarrow\infty),
\]
which contradicts \eqref{relamesti}.

Consequently, we obtain $s'\leq s_0$.
\end{proof}

\section{Conversion of estimates}
\label{sec:convesti}
In this section, we rewrite the estimates for $s$ in Lemma \ref{rebootstrap} into estimates for $t$.

\begin{lemma}[Interval]
\label{interval}
Let $s_0$ be sufficiently large. Then there exists $t_0<0$ such that 
\[
[t_0,t_1]\subset {s_{t_1}}^{-1}([s_0,s_1]),\quad \left|s_{t_1}(t)^{-1}-|t|\right|\lesssim |t|^{M+1}\quad (t\in [t_0,t_1])
\]
hold for all $t_1\in(t_0,0)$.
\end{lemma}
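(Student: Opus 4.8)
The plan is to integrate the ODE defining the time change and read off the claimed asymptotics from the bootstrap bound $|s\lambda(s)-1|<s^{-M}$ of \eqref{bootstrap}, which by Corollary \ref{reesti} and Lemma \ref{s'=s_0} is available throughout $[s_0,s_1]$. Differentiating $s_{t_1}(t)=s_1-\int_t^{t_1}\tilde{\lambda}_{t_1}(\tau)^{-2}\,d\tau$ gives $\frac{d}{dt}s_{t_1}(t)=\lambda(s_{t_1}(t))^{-2}>0$, so $s_{t_1}$ is a strictly increasing diffeomorphism of $I_{t_1}$ onto $J_{s_1}$ with $s_{t_1}(t_1)=s_1=|t_1|^{-1}$. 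The natural quantity to track is $s_{t_1}(t)^{-1}$, for which the chain rule yields
\[
\frac{d}{dt}\left(s_{t_1}(t)^{-1}\right)=-\bigl(s_{t_1}(t)\,\lambda(s_{t_1}(t))\bigr)^{-2}=-1+O\!\left(s_{t_1}(t)^{-M}\right),
\]
the last equality being exactly $|s\lambda-1|<s^{-M}$.

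Next I would integrate this identity over $[t,t_1]$, restricting to $t$ with $s_{t_1}(t)\in[s_0,s_1]$ so that the bootstrap applies. Using $s_{t_1}(t_1)^{-1}=|t_1|$ and $t_1-t=|t|-|t_1|$, the leading contributions telescope to $|t|$, leaving
\[
s_{t_1}(t)^{-1}=|t|-\int_t^{t_1}O\!\left(s_{t_1}(\tau)^{-M}\right)d\tau.
\]
To control the error with the sharp rate I would bootstrap twice. A crude first pass using only $s_{t_1}(\tau)\ge s_0$ gives $s_{t_1}(\tau)^{-1}=|\tau|\bigl(1+O(s_0^{-M})\bigr)$, hence $s_{t_1}(\tau)\approx|\tau|^{-1}$; feeding this back yields $s_{t_1}(\tau)^{-M}\lesssim|\tau|^{M}$, and since $|\tau|\le|t|$ on the interval of integration,
\[
\left|\int_t^{t_1}O\!\left(s_{t_1}(\tau)^{-M}\right)d\tau\right|\lesssim\int_{|t_1|}^{|t|}r^{M}\,dr\le\frac{|t|^{M+1}}{M+1}.
\]
This establishes $\left|s_{t_1}(t)^{-1}-|t|\right|\lesssim|t|^{M+1}$ on $s_{t_1}^{-1}([s_0,s_1])$, with a constant uniform in $t_1$ because all the bootstrap constants are.

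Finally I would fix the interval. Since $s_{t_1}$ is increasing with $s_{t_1}(t_1)=s_1$, the preimage $s_{t_1}^{-1}([s_0,s_1])$ equals $[t_{t_1}(s_0),t_1]$, so it suffices to choose $t_0$ with $t_{t_1}(s_0)\le t_0$ for every admissible $t_1$. Evaluating the estimate just proved at the left endpoint, where $s_{t_1}=s_0$, gives $\bigl|\,|t_{t_1}(s_0)|-s_0^{-1}\bigr|\lesssim|t_{t_1}(s_0)|^{M+1}$, whence $|t_{t_1}(s_0)|\ge\tfrac12 s_0^{-1}$ once $s_0$ is large. Taking $t_0:=-\tfrac12 s_0^{-1}$ then forces $t_{t_1}(s_0)\le t_0$, so $[t_0,t_1]\subset[t_{t_1}(s_0),t_1]=s_{t_1}^{-1}([s_0,s_1])$; moreover any $t_1\in(t_0,0)$ has $s_1=|t_1|^{-1}>2s_0$, keeping $s_1$ large as required. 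I expect the one delicate point to be precisely this bootstrap-type bookkeeping: the sharp estimate is a priori valid only where $|s\lambda-1|<s^{-M}$ holds, so one must first use the crude bound to locate $t_{t_1}(s_0)\approx -s_0^{-1}$ and verify the inclusion before asserting the $|t|^{M+1}$ rate on all of $[t_0,t_1]$; the remaining computations are routine integration.
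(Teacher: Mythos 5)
Your proposal is correct and follows essentially the same route as the paper: both arguments integrate the time-change ODE $\frac{dt}{ds}=\lambda^2$ (equivalently $\frac{d}{dt}s_{t_1}(t)^{-1}=-(s\lambda)^{-2}$) using the bootstrap bound $\left|s\lambda(s)-1\right|<s^{-M}$ from \eqref{bootstrap}, establish the crude equivalence $\frac{1}{2}|t|\leq s_{t_1}(t)^{-1}\leq 2|t|$, upgrade it to the sharp $O(|t|^{M+1})$ rate, and then choose $t_0:=-\frac{1}{2}{s_0}^{-1}$, which is legitimate since $s_0$ is independent of $t_1$ by Lemma \ref{s'=s_0}. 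The only cosmetic difference is that the paper integrates in the rescaled variable $s$ (writing $\lambda^2=\sigma^{-2}+\sigma^{-2}(\sigma\lambda+1)(\sigma\lambda-1)$ and getting the $s^{-(M+1)}$ error directly), whereas you integrate in $t$ and therefore need your two-pass bootstrap to convert $s_{t_1}(\tau)^{-M}$ into $|\tau|^M$ before integrating.
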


\begin{proof}
Firstly, $[t_{t_1}(s_0),t_1]={s_{t_1}}^{-1}([s_0,s_1])$ holds. For all $s\in[s_0,s_1]$, we have
\[
t_1-t_{t_1}(s)=s^{-1}-{s_1}^{-1}+\int_s^{s_1}\sigma^{-2}\left(\sigma\lambda_{t_1}(\sigma)+1\right)\left(\sigma\lambda_{t_1}(\sigma)-1\right)d\sigma
\]
since $-{s_1}^{-1}=t_1={t_{t_1}}(s_1)$. Therefore, we have
\[
\frac{1}{2}s^{-1}\leq s^{-1}\left(1-3s^{-M}\right)\leq\left|t_{t_1}(s)\right|\leq s^{-1}\left(1+3s^{-M}\right)\leq 2s^{-1}.
\]
Accordingly, we obtain $\frac{1}{2}\left|t_{t_1}(s)\right|\leq s^{-1}\leq 2\left|t_{t_1}(s)\right|$. According to ${s_{t_1}}^{-1}=t_{t_1}$, we obtain
\begin{align}
\label{tsesti2}
\frac{1}{2}|t|\leq s_{t_1}(t)^{-1}\leq 2|t|.
\end{align}
Consequently, according to \eqref{tsesti2}, we obtain
\[
\left||t|-s_{t_1}(t)^{-1}\right|\leq 3{s_{t_1}}(t)^{-(M+1)}\leq 3\cdot 2^{M+1}|t|^{M+1}.
\]
Furthermore, since
\[
t_{t_1}(s_0)=-|t_{t_1}(s_0)|\leq -\frac{1}{2}s_{t_1}(t_{t_1}(s_0))^{-1}=-\frac{1}{2}{s_0}^{-1}
\]
and $s_0$ is independent of $t_1$ according to Lemma \ref{s'=s_0}, we obtain Lemma \ref{interval}.
\end{proof}

\begin{lemma}[Conversion of estimates]
\label{uniesti}
For all $t\in[t_0,t_1]$, 
\begin{align*}
&\tilde{\lambda}_{t_1}(t)=|t|\left(1+\epsilon_{\tilde{\lambda},t_1}(t)\right),\quad \tilde{b}_{t_1}(t)=|t|\left(1+\epsilon_{\tilde{b},t_1}(t)\right),\quad\left|\tilde{w}_{t_1}(t)\right|\lesssim |t|^{2L-1},\\
&\|\tilde{\varepsilon}_{t_1}(t)\|_{H^1}\lesssim |t|^{L+\frac{\kappa'}{2}},\quad \||y|\tilde{\varepsilon}_{t_1}(t)\|_{2}\lesssim |t|^{L+\frac{\kappa'}{2}-1}
\end{align*}
holds for some functions $\epsilon_{\tilde{\lambda},t_1}$ and $\epsilon_{\tilde{b},t_1}$. Furthermore,
\[
\sup_{t_1\in[t,0)}\left|\epsilon_{\tilde{\lambda},t_1}(t)\right|\lesssim |t|^M,\quad \sup_{t_1\in[t,0)}\left|\epsilon_{\tilde{b},t_1}(t)\right|\lesssim |t|^{M'}.
\]
\end{lemma}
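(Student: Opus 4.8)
The plan is to read off every bound from the rescaled-time estimates of Lemma \ref{rebootstrap} and transport them to the original time $t$ through the substitution $s=s_{t_1}(t)$, using Lemma \ref{interval} as the dictionary between $s^{-1}$ and $|t|$. Fix $t\in[t_0,t_1]$ with $t_1\in[t,0)$ and set $s:=s_{t_1}(t)$. By Lemma \ref{interval} we have $s\in[s_0,s_1]$, so all of \eqref{relamesti}--\eqref{rewesti} and \eqref{reepsiesti} are available at this $s$, and moreover
\[
s^{-1}=|t|\bigl(1+O(|t|^{M})\bigr),
\]
with a constant that does not depend on $t_1$ (the only free parameter, $s_0$, is fixed independently of $t_1$ by Lemma \ref{s'=s_0} and Corollary \ref{reesti}). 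Since $\tilde\lambda_{t_1}(t)=\lambda(s)$, $\tilde b_{t_1}(t)=b(s)$, $\tilde w_{t_1}(t)=w(s)$, and $\tilde\varepsilon_{t_1}(t)=\varepsilon(s)$ hold by construction, the whole lemma reduces to substituting this dictionary into the $s$-bounds and converting powers of $s^{-1}$ into powers of $|t|$.

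For the scaling parameter I would rewrite \eqref{relamesti} as $\lambda(s)=s^{-1}(1+O(s^{-M}))$ and multiply by the displayed identity for $s^{-1}$; since $s^{-M}\approx|t|^{M}$, the two multiplicative errors combine into a single factor $1+O(|t|^{M})$, giving $\tilde\lambda_{t_1}(t)=|t|(1+\epsilon_{\tilde\lambda,t_1}(t))$ with $|\epsilon_{\tilde\lambda,t_1}(t)|\lesssim|t|^{M}$. The estimate for $b$ is entirely analogous, starting from \eqref{rebesti} together with the sharper relation $|b/\lambda-1|\lesssim s^{-2(L-1)}$ established in the proof of Lemma \ref{rebootstrap}, which fixes the exponent $M'$; this produces $\tilde b_{t_1}(t)=|t|(1+\epsilon_{\tilde b,t_1}(t))$ with $|\epsilon_{\tilde b,t_1}(t)|\lesssim|t|^{M'}$. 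For the translation parameter I substitute \eqref{rewesti} directly and translate the resulting power of $s^{-1}$ into the corresponding power of $|t|$. For the error term, \eqref{reepsiesti} gives $\|\varepsilon(s)\|_{H^1}\lesssim s^{-(L+\kappa/2)}$ and, using $b\approx s^{-1}$ to remove the weight $b^{2}$, $\||y|\varepsilon(s)\|_{2}\lesssim s^{-(L+\kappa/2-1)}$; transporting these through $s^{-1}=|t|(1+O(|t|^{M}))$ and absorbing the bounded conversion factor $(1+O(|t|^{M}))^{L+\kappa/2}$ into a slightly smaller exponent $\kappa'<\kappa$ yields the stated $H^{1}$- and weighted-$L^{2}$-bounds.

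The point that needs the most care is not any single computation but the \emph{uniformity in $t_1$} asserted in the last two displayed inequalities of the statement. This is where I would be most careful: every constant entering the argument---the implicit constants in Lemma \ref{rebootstrap}, the constant in the dictionary $s^{-1}=|t|(1+O(|t|^M))$ coming from Lemma \ref{interval}, and the lower endpoint $s_0$ of the admissible range of $s$---must be checked to be independent of $t_1$. Granting this, which is precisely the content of Lemma \ref{s'=s_0} and Corollary \ref{reesti} (they guarantee $s_*=s'=s_0$ with $s_0$ fixed once and for all), the bounds on $\epsilon_{\tilde\lambda,t_1}$ and $\epsilon_{\tilde b,t_1}$ hold with a single constant for every admissible $t_1\geq t$, so taking the supremum over $t_1\in[t,0)$ changes nothing and the supremum estimates $\sup_{t_1}|\epsilon_{\tilde\lambda,t_1}(t)|\lesssim|t|^{M}$ and $\sup_{t_1}|\epsilon_{\tilde b,t_1}(t)|\lesssim|t|^{M'}$ follow immediately. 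The remaining computations are then routine substitutions once the dictionary is in place.
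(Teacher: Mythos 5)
Your proposal follows essentially the same route as the paper's proof: the paper also defines $\epsilon_{\tilde\lambda,t_1}(t):=\tilde\lambda_{t_1}(t)/|t|-1$, splits it as $\left(s_{t_1}(t)\tilde\lambda_{t_1}(t)-1\right)\frac{1}{s_{t_1}(t)|t|}+\left(\frac{1}{s_{t_1}(t)|t|}-1\right)$, and bounds the two terms by \eqref{relamesti} and by \eqref{tsesti2} together with Lemma \ref{interval}; your multiplicative version of this computation, and your appeal to Lemma \ref{s'=s_0} and Corollary \ref{reesti} for uniformity in $t_1$, are exactly the paper's argument (the paper disposes of $\tilde b$, $\tilde w$, $\tilde\varepsilon$ with a one-word ``similarly''). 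Your explicit attention to the uniformity of all constants in $t_1$ is, if anything, more careful than the written proof.

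One caveat, which you share with the paper's own terse proof: direct substitution of \eqref{rewesti}, i.e.\ $|w(s)|\lesssim s^{-2}$, only yields $|\tilde w_{t_1}(t)|\lesssim |t|^{2}$, whereas the statement asserts the strictly stronger bound $|t|^{2L-1}=|t|^{2+2/K}$. To reach that exponent one would need $|\partial w/\partial s|\lesssim s^{-2L}$ before integrating, but the available modulation bound is only $|\partial w/\partial s|\lesssim s^{-3}$, resting on the estimate $\left|(\im\varepsilon,\nabla Q)_2\right|\lesssim s^{-2}$ of Lemma \ref{imepsiesti}, which cannot be improved for general $g$ (the $\nabla g$ contribution to the momentum identity is genuinely $O(1)$). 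Neither your proposal nor the paper supplies this upgrade, so this should be flagged as a discrepancy in the statement itself rather than a defect peculiar to your argument. Finally, your reading of the undefined exponents $\kappa'$ and $M'$ is reasonable: the conversion factor $(1+O(|t|^M))$ is bounded, so in fact $\kappa'=\kappa$ already works, and since $M<2(L-1)$ the bound \eqref{rebesti} dominates the correction $|b/\lambda-1|\lesssim s^{-2(L-1)}$, so $M'=M$ works; these choices are consistent with what the estimates actually give.
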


\begin{proof}
Firstly, we define $\epsilon_{\tilde{\lambda},t_1}(t):=\frac{\tilde{\lambda}_{t_1}(t)}{|t|}-1$. According to \eqref{tsesti2} and Lemma \ref{interval}, we have
\[
\left|\epsilon_{\tilde{\lambda},t_1}(t)\right|=\left|\left(s_{t_1}(t)\tilde{\lambda}_{t_1}(t)-1\right)\frac{1}{s_{t_1}(t)|t|}+\frac{1}{s_{t_1}(t)|t|}-1\right|\lesssim |t|^M.
\]
Similarly, we define $\epsilon_{\tilde{b},t_1}(t):=\frac{\tilde{b}_{t_1}(t)}{|t|}-1$ and obtain estimates of $\tilde{b}_{t_1}(t)$ and $\tilde{w}_{t_1}(t)$.
\end{proof}

\section{Proof of Theorem \ref{theorem:EMBS}}
\label{sec:proof}
\begin{proof}[Proof of Theorem \ref{theorem:EMBS}]
Let $(t_n)_{n\in\mathbb{N}}\subset(t_0,0)$ be a increasing sequence such that $\lim_{n\nearrow \infty}t_n=0$. For each $n\in\mathbb{N}$, let $u_n$ be the solution for \eqref{NLS} with the initial value
\begin{align*}
u_n(t_n,x):=\frac{1}{{\lambda_{1,n}}^\frac{N}{2}}Q\left(\frac{x}{\lambda_{1,n}}\right)e^{-i\frac{b_{1,n}}{4}\frac{|x|^2}{{\lambda_{1,n}}^2}}
\end{align*}
at $t_n$, where $b_{1,n}=\lambda_{1,n}={s_n}^{-1}=-t_n$.

According to Lemma \ref{decomposition}, there exists the decomposition
\[
u_n(t,x)=\frac{1}{\tilde{\lambda}_n(t)^{\frac{N}{2}}}\left(Q+\tilde{\varepsilon}_n\right)\left(t,\frac{x+\tilde{w}_n(t)}{\tilde{\lambda}_n(t)}\right)e^{-i\frac{\tilde{b}_n(t)}{4}\frac{|x+\tilde{w}_n(t)|^2}{\tilde{\lambda}_n(t)^2}+i\tilde{\gamma}_n(t)}
\]
on $[t_0,t_n]$. Then $(u_n(t_0))_{n\in\mathbb{N}}$ is bounded in $\Sigma^1$. Therefore, up to a subsequence, there exists $u_\infty(t_0)\in \Sigma^1$ such that
\[
u_n(t_0)\rightharpoonup u_\infty(t_0)\quad \mbox{weakly in}\ \Sigma^1.
\]
Moreover, as in Section 3.2 in \cite{LMR}, we see that
\[
u_n(t_0)\rightarrow u_\infty(t_0)\quad \mbox{in}\ L^2(\mathbb{R}^N)\quad (n\rightarrow\infty).
\]

Let $u_\infty$ be  the solution for \eqref{NLS} with the initial value $u_\infty(t_0)$ and $T^*$ be the supremum of the maximal existence interval of $u_\infty$. Moreover, we define $T:=\min\{0,T^*\}$. For any $T'\in[t_0,T)$, we have $[t_0,T']\subset[t_0,t_n]$ if $n$ is sufficiently large. Then there exists $n_0$ such that 
\[
\sup_{n\geq n_0}\|u_n\|_{L^\infty([t_0,T'],\Sigma^1)}\lesssim \left(1+|T'|^{-1}\right)\left(1+|t_0|^L\right)
\]
holds. According to Lemma \ref{contidepend}, 
\[
u_n\rightarrow u_\infty\quad \mathrm{in}\ C([t_0,T'],L^2(\mathbb{R}^N))\quad (n\rightarrow\infty)
\]
holds. In particular, $u_n(t)\rightharpoonup u_\infty(t)\ \mathrm{in}\ \Sigma^1$ for any $t\in [t_0,T)$. Furthermore, we have
\[
\|u_\infty(t)\|_2=\|u_\infty(t_0)\|_2=\lim_{n\rightarrow\infty}\|u_n(t_0)\|_2=\lim_{n\rightarrow\infty}\|u_n(t_n)\|_2=\|Q\|_2.
\]

According to weak convergence in $H^1(\mathbb{R}^N)$ and Lemma \ref{decomposition}, we decompose $u_\infty$ to
\[
u_\infty(t,x)=\frac{1}{\tilde{\lambda}_\infty(t)^{\frac{N}{2}}}\left(Q+\tilde{\varepsilon}_\infty\right)\left(t,\frac{x+\tilde{w}_\infty(t)}{\tilde{\lambda}_\infty(t)}\right)e^{-i\frac{\tilde{b}_\infty(t)}{4}\frac{|x+\tilde{w}_\infty(t)|^2}{\tilde{\lambda}_\infty(t)^2}+i\tilde{\gamma}_\infty(t)}
\]
on $[t_0,T)$. Furthermore, as $n\rightarrow\infty$, 
\begin{align*}
&\tilde{\lambda}_n(t)\rightarrow\tilde{\lambda}_\infty(t),\quad \tilde{b}_n(t)\rightarrow \tilde{b}_\infty(t),\quad \tilde{w}_n(t)\rightarrow\tilde{w}_\infty(t),\quad e^{i\tilde{\gamma}_n(t)}\rightarrow e^{i\tilde{\gamma}_\infty(t)},\\
&\tilde{\varepsilon}_n(t)\rightharpoonup \tilde{\varepsilon}_\infty(t)\quad \mbox{weakly in}\ \Sigma^1
\end{align*}
hold for any $t\in[t_0,T)$. Therefore, we obtain
\begin{align*}
&\tilde{\lambda}_{\infty}(t)=\left|t\right|(1+\epsilon_{\tilde{\lambda},0}(t)),\quad \tilde{b}_{\infty}(t)=\left|t\right|(1+\epsilon_{\tilde{b},0}(t)),\quad \left|\tilde{w}_\infty(t)\right|\lesssim |t|^{2L-1},\\
&\|\tilde{\varepsilon}_{\infty}(t)\|_{H^1}\lesssim \left|t\right|^{L+\frac{\kappa'}{2}},\quad \||y|\tilde{\varepsilon}_{\infty}(t)\|_2\lesssim \left|t\right|^{L+\frac{\kappa'}{2}-1},\quad \left|\epsilon_{\tilde{\lambda},0}(t)\right|\lesssim |t|^M,\quad \left|\epsilon_{\tilde{b},0}(t)\right|\lesssim |t|^{M'}
\end{align*}
from the uniform estimates in Lemma \ref{uniesti}. Consequently, we obtain Theorem \ref{theorem:EMBS}.
\end{proof}

\appendix
\section{A fact regarding the Schr\"{o}dinger equation}
\label{sec:SFacts}
In this section, we describe a certain continuous dependence on the initial values used in the proof of Theorem \ref{theorem:EMBS}. For notation, see \cite{CSSE}.

We consider a more general Schr\"{o}dinger equation
\begin{align}
\label{GNLS}
i\frac{\partial u}{\partial t}+\Delta u+g(u)=0,\quad (t,x)\in\mathbb{R}\times\mathbb{R}^N.
\end{align} 
For $g=g_1+\cdots+g_k$, we consider the following assumptions:
\begin{enumerate}[(a)]
\item \label{Con1}There exists $G_j\in C^1(H^1(\mathbb{R}^N),\mathbb{R})$ such that $G_j'=g_j$.
\item \label{Con2}There exist $r_j,\rho_j\in[2,2^*)$ such that for any $M<\infty$, there exists $L(M)<\infty$ such that
\[
\left\|g_j(u)-g_j(v)\right\|_{\rho_j'}\leq L(M)\|u-v\|_r
\]
for all $u,v\in H^1(\mathbb{R}^N)$ such that $\|u\|_{H^1}+\|v\|_{H^1}\leq M$.
\item \label{Con3}For any $u\in H^1(\mathbb{R}^N)$,
\[
\im g_j(u)\overline{u}=0\quad\mbox{a.e. in }\mathbb{R}^N.
\]
\end{enumerate}
Here, $p'$ is the H\"{o}lder conjugate and $2^*$ is the Sobolev conjugate, i.e., $2^*:=\frac{2N}{N-2}\ (N\geq 3)$, $2^*:=\infty\ (N=1,2)$.

\begin{lemma}
\label{contidepend}
Let $g=g_1+\cdots+g_k$ satisfy \eqref{Con1}, \eqref{Con2}, and \eqref{Con3}. For $\varphi_n$ and $\varphi\in H^1(\mathbb{R}^N)$, let $u_n$ and $u$ be solutions for \eqref{GNLS} with $u_n(0)=\varphi_n$ and $u(0)=\varphi$, respectively. Moreover, we assume that $\varphi_n\rightarrow\varphi$ in $L^2(\mathbb{R}^N)$ and that for any bounded closed interval $J\subset(-T_{\min}(\varphi),T_{\max}(\varphi))$, there exists $m\in\mathbb{N}$ such that $\sup_{n\geq m}\|u_n\|_{L^\infty(J,H^1)}<\infty$. Then
\[
u_n\rightarrow u\quad \mathrm{in}\ C(J,L^2(\mathbb{R}^N))\quad(n\rightarrow\infty).
\]
In particular, $u_n(t)\rightharpoonup u(t)$ weakly in $H^1(\mathbb{R}^N)$ for any $t\in I$.
\end{lemma}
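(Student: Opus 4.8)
The plan is to run a Strichartz bootstrap on the difference $w:=u_n-u$ entirely at the $L^2$ level, exploiting the uniform $H^1$ bound only to freeze the Lipschitz constant in~(b); this is the device that overcomes the mismatch between the $L^2$-convergence of the data and the $H^1$ framework in which the solutions live. After replacing $J$ by the bounded closed interval $J\cup\{0\}$ (still contained in the existence interval), I first fix $M<\infty$ and $m\in\mathbb{N}$ with $\|u_n\|_{L^\infty(J,H^1)},\|u\|_{L^\infty(J,H^1)}\le M$ for all $n\ge m$, using $u\in C(J,H^1)$ together with the standing hypothesis. Writing $L:=L(2M)$ as in~(b), conditions (a) and (c) guarantee that $u_n$ and $u$ are genuine Strichartz-class $H^1$ solutions, and (b) yields the pointwise bound $\|g_j(u_n(t))-g_j(u(t))\|_{\rho_j'}\le L\|w(t)\|_{r_j}$ on $J$.

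For each $j$ I attach to the strictly subcritical exponents $r_j,\rho_j\in[2,2^*)$ the admissible pairs $(q_j,r_j)$ and $(\gamma_j,\rho_j)$. The quantitative heart of the argument is the strictly positive gain in the time variable,
\[
\theta_j:=\frac{1}{\gamma_j'}-\frac{1}{q_j}=1-\frac{N}{2}\left(1-\frac{1}{\rho_j}-\frac{1}{r_j}\right)>0,
\]
where positivity follows from $\tfrac{1}{\rho_j},\tfrac{1}{r_j}>\tfrac{1}{2^*}=\tfrac{N-2}{2N}$. On a subinterval $I\subset J$ with left endpoint $t_*$, I combine the Duhamel formula based at $t_*$, the homogeneous and inhomogeneous $L^2$-Strichartz estimates, the pointwise bound from (b), and Hölder in time (which produces the factor $|I|^{\theta_j}$). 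Abbreviating $X:=\|w\|_{L^\infty(I,L^2)}+\sum_j\|w\|_{L^{q_j}(I,L^{r_j})}$, this gives
\[
X\le C\|w(t_*)\|_2+CL\sum_j|I|^{\theta_j}X,
\]
with $C$ depending only on $N$ and the Strichartz constants.

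I then fix a step length $\tau$ — depending only on $N$, $C$, and $L$, hence independent of $n$ and of the base point — so small that $CL\sum_j\tau^{\theta_j}\le\tfrac12$. Absorbing the last term yields $\|w\|_{L^\infty(I,L^2)}\le 2C\|w(t_*)\|_2$ on every subinterval of length $\tau$. Covering the forward part of $J$ by a fixed finite number $K$ of such intervals (independent of $n$, because $\tau$ is) and iterating from $t=0$ gives $\|w\|_{L^\infty(J,L^2)}\le(2C)^{K}\|\varphi_n-\varphi\|_2\to0$; the backward part is handled identically by time reversibility, which proves $u_n\to u$ in $C(J,L^2)$. For the final assertion, fix $t\in J$: the sequence $(u_n(t))$ is bounded in $H^1$ and converges to $u(t)$ in $L^2$, so every weak-$H^1$ limit point of it equals $u(t)$, whence $u_n(t)\rightharpoonup u(t)$ in $H^1$.

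The main obstacle is conceptual rather than computational: because only $L^2$-convergence of the data is available, the usual $H^1$ contraction is unavailable, and one must instead close a contraction in $L^2$-based Strichartz norms. The $H^1$ information enters solely through the constant $L(2M)$, while the smallness that drives the iteration comes from the strictly subcritical exponents via $\tau^{\theta_j}$. The one point demanding genuine care is the uniformity in $n$ of the step size $\tau$, and hence of the number $K$ of iteration steps across $J$; this uniformity is exactly what the $n$-independence of $L$, $C$, and the $\theta_j$ provides.
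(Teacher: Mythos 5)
Your proposal is correct and follows essentially the same route as the paper: an $L^2$-level Strichartz contraction for $u_n-u$ in the mixed norms $L^\infty L^2$ and $L^{q_j}L^{r_j}$, with the uniform $H^1$ bounds entering only through the Lipschitz constant $L(M)$ of assumption (b) and smallness coming from the time factor $T^{1/\gamma_j'-1/q_j}$. The only difference is cosmetic: the paper runs the absorption argument once on a small interval around $0$ and leaves the extension to all of $J$ implicit, whereas you spell out the uniform step size $\tau$ and the finite covering/iteration, which is a welcome clarification rather than a new idea.
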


\begin{proof}
We may assume that $T_1,T_2>0$ and $J=[-T_1,T_2]$. Then we define
\[
M:=\|u\|_{L^\infty(J,H^1)}+\sup_{n\geq m}\|u_n\|_{L^\infty(J,H^1)}.
\]
Furthermore, we define
\[
\mathcal{G}_j(u)(t):=i\int_0^t\mathcal{T}(t-s)g_j(u(s))ds,\quad \mathcal{H}(u)(t):=\mathcal{T}(t)\varphi+\mathcal{G}_1(u)(t)+\cdots+\mathcal{G}_k(u)(t).
\]
Similarly, we define $\mathcal{G}_j(u_n)$ and $\mathcal{H}(u_n)$. According to Duhamel's principle, we have $u=\mathcal{H}(u)$ and $u_n=\mathcal{H}(u_n)$.

Let $n\geq m$ and $0<T\leq\min\{T_1,T_2\}$. Moreover, let $(q,r)$, $(q_j,r_j)$, and $(\gamma_j,\rho_j)$ be admissible pairs. Then, according to the Strichartz estimate and \eqref{Con2}, we have
\begin{align*}
\|\mathcal{T}(t)\varphi_n-\mathcal{T}(t)\varphi\|_{L^q(\mathbb{R},L^r)}&\leq C\|\varphi_n-\varphi\|_{L^2},\\
\|\mathcal{G}_j(u_n)-\mathcal{G}_j(u)\|_{L^q((-T,T),L^r)}&\leq C(M)T^{\frac{1}{\gamma'_j}-\frac{1}{q_j}}\|u_n-u\|_{L^{q_j}((-T,T),L^{r_j})}.
\end{align*}

For $v,w\in C([-T,T],H^1(\mathbb{R}^N))$, we define
\[
d(v,w):=\|v-w\|_{L^\infty((-T,T),L^2)}+\sum_{j=1}^k\|v-w\|_{L^{q_j}((-T,T),L^{r_j})}.
\]
Then we have
\[
d(u_n,u)=d(\mathcal{H}(u_n),\mathcal{H}(u))\leq C\|\varphi_n-\varphi\|_{L^2}+d(u_n,u)C(M)\sum_{j=1}^kT^{\frac{1}{\gamma'_j}-\frac{1}{q_j}}.
\]
Since there exists $T(M)>0$ such that $C(M)\sum_{j=1}^kT(M)^{\frac{1}{\gamma'_j}-\frac{1}{q_j}}\leq\frac{1}{2}$, we obtain
\[
\|u_n-u\|_{L^\infty((-T(M),T(M)),L^2)}\leq d(u_n,u)\leq C\|\varphi_n-\varphi\|_{L^2}\rightarrow 0\quad (n\rightarrow \infty),
\]
which yields the conclusion.

Finally, $(u_n(t))_{n\in\mathbb{N}}$ is bounded in $H^1(\mathbb{R}^N)$ and converges to $u(t)$ in $L^2(\mathbb{R}^N)$ for any $t\in I$. Therefore, $(u_n(t))_{n\in\mathbb{N}}$ weakly converges to $u(t)$ in $H^1(\mathbb{R}^N)$.
\end{proof}

\section*{Acknowledgement}
The author would like to thank Masahito Ohta and Noriyoshi Fukaya for their support in writing this paper.

\end{document}